\newtheorem{theorem}{Theorem}[section]
\newtheorem{proposition}[theorem]{Proposition}
\newtheorem{lemma}[theorem]{Lemma}
\theoremstyle{definition}
\newtheorem{definition}[theorem]{Definition}
\newtheorem{remark}[theorem]{Remark}
\newcommand{\R}{\mathbb{R}}
\begin{document}

\title[Sharp boundary $\varepsilon$-regularity of optimal transport maps]{Sharp boundary $\varepsilon$-regularity of optimal transport maps}
\author{Tatsuya Miura}
\address{Department of Mathematics, Tokyo Institute of Technology, 2-12-1 Ookayama, Meguro-ku, Tokyo 152-8551, Japan}
\email{miura@math.titech.ac.jp}
\author{Felix Otto}
\address{Max Planck Institute for Mathematics in the Sciences, Inselstra{\ss}e 22, 04103 Leipzig, Germany}
\email{felix.otto@mis.mpg.de}
\keywords{Optimal transport, boundary regularity, epsilon-regularity.}

\date{\today}

\begin{abstract}
	In this paper we develop a boundary $\varepsilon$-regularity theory for optimal transport maps between bounded open sets with $C^{1,\alpha}$-boundary.
	Our main result asserts sharp $C^{1,\alpha}$-regularity of transport maps at the boundary in form of a linear estimate under certain assumptions: The main quantitative assumptions are that the local nondimensionalized transport cost is small and that the boundaries are locally almost flat in $C^{1,\alpha}$.
	Our method is completely variational and builds on the recently developed interior regularity theory.
\end{abstract}

\maketitle
\tableofcontents

\section{Introduction}

Let $d\geq2$, $\alpha\in(0,1)$, and $\Omega_0,\Omega_1\subset\mathbb{R}^d$ be bounded open sets with $C^{1,\alpha}$-boundary.
For $\lambda_0,\lambda_1\in[1/2,2]$, let $T$ be a unique solution to the optimal transport problem between constant densities $\lambda_0\chi_{\Omega_0}$ and $\lambda_1\chi_{\Omega_1}$ of same mass:
\begin{align}\label{eqn:minproblem}
	\min_{T\sharp(\lambda_0\chi_{\Omega_0})=\lambda_1\chi_{\Omega_1}}\int_{\mathbb{R}^d}|T(x)-x|^2\lambda_0\chi_{\Omega_0}dx,
\end{align}
where $\chi_{\Omega_i}$ denotes the characteristic function of $\Omega_i$ and, with an abuse of notation, $T\sharp(\lambda_0\chi_{\Omega_0})$ denotes the push-forward by $T$ of the measure $\lambda_0\chi_{\Omega_0}dx$.

In this paper we aim at clarifying how the regularity of the boundary $\partial\Omega_i$ influences boundary regularity of the map $T:\Omega_0\to\Omega_1$, focusing on the simplest case of constant densities.
To this end we develop a boundary $\varepsilon$-regularity theory on the intrinsic $L^2$-level, building on the interior $\varepsilon$-regularity theory by the last author and his co-authors \cite{GoldmanOtto,GoldmanHuesmannOtto}.

Our main result asserts sharp $C^{1,\alpha}$-regularity of $T$ near the boundary under certain assumptions, which roughly mean that $T$ is quantitatively ``close to being the identity'' as in usual $\varepsilon$-regularity theory.
Besides the main quantitative assumption, we assume two qualitative properties:
One is the {\em tangency condition} in an open ball $B_R(p)\subset\mathbb{R}^d$ that
\begin{align}\label{eqn:tangency}
	p\in\partial\Omega_0\cap\partial\Omega_1, \quad \nu_0(p)=\nu_1(p), \quad \partial\Omega_i\cap B_R(p)\ \textrm{is connected}\ (i=0,1),
\end{align}
where $\nu_i$ denotes the outer unit normal of $\partial\Omega_i$:
The other is the {\em topological condition} in $B_R(p)$ that
\begin{align}\label{eqn:topological}
	T(B_{R/2}(p)\cap\Omega_0)\subset B_{R}(p) \quad \textrm{and} \quad T^{-1}(B_{R/2}(p)\cap\Omega_1)\subset B_{R}(p),
\end{align}
where by abuse of notation we let $T^{-1}$ stand for the optimal transport map from $\lambda_1\chi_{\Omega_1}$ to $\lambda_0\chi_{\Omega_0}$ (since $T^{-1}(T(x))=x$ and $T(T^{-1}(y))=y$ hold a.e.\ \cite[Theorem 2.12 (iv)]{Villani}) and understand that the inclusions hold up to null sets.

Here is our main theorem, which asserts $C^{1,\alpha}$-regularity with a linear estimate:

\begin{theorem}[Boundary $\varepsilon$-regularity]\label{thm:epsilonregularity}
	There exist constants $\varepsilon,C>0$ depending only on $d,\alpha$ with the following property:
	Let $p\in\mathbb{R}^d$, $R>0$, $\lambda_0,\lambda_1\in[1/2,2]$ and $T$ be the solution of \eqref{eqn:minproblem}.
	If the tangency condition \eqref{eqn:tangency} and the topological condition \eqref{eqn:topological} hold in $B_R(p)$, and if\footnote{
	For a function $f:A\subset\mathbb{R}^n\to\mathbb{R}^m$ and a subset $B\subset\mathbb{R}^n$, we let $[f]_{\alpha,B}$ denote the $\alpha$-H\"older semi-norm $[f]_{\alpha,B}:=\sup\left.\left\{\frac{|f(x_1)-f(x_2)|}{|x_1-x_2|^\alpha} \right| x_1\neq x_2\in A \cap B \right\}$.
	}
	\begin{align}\label{eqn:smallness}
		\varepsilon':={1\over R^{d+2}}\int_{B_R(p)}|T-x|^2\lambda_0\chi_{\Omega_0}dx + R^{2\alpha}[\nu_0]_{\alpha,B_{R}(p)}^2+R^{2\alpha}[\nu_1]_{\alpha,B_{R}(p)}^2\leq\varepsilon,
	\end{align}
	then $T$ is of class $C^{1,\alpha}$ in $\overline{B_{R/16}(p)\cap\Omega_0}$, and we have the estimate
	$$R^{2\alpha}[\nabla T]_{\alpha,B_{R/16}(p)}^2 \leq C\varepsilon'.$$
\end{theorem}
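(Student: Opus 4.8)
\emph{Proof proposal.} The plan is to run a variational $\varepsilon$‑regularity scheme — boundary harmonic approximation followed by a Campanato iteration — in the spirit of the interior theory \cite{GoldmanOtto,GoldmanHuesmannOtto}, the new feature being that the boundary flatness $[\nu_i]_\alpha$ is carried along as a \emph{self‑improving forcing term}, which is precisely what produces the sharp exponent $\alpha$. After translating $p$ to $0$, rotating so that $\nu_0(p)=\nu_1(p)=-e_d$, and rescaling to $R=1$, the hypotheses say that near $0$ each $\Omega_i$ is $\{x_d>\psi_i(x')\}$ with $\psi_i(0)=0$, $\nabla\psi_i(0)=0$, $[\nabla\psi_i]_{\alpha}\lesssim\sqrt\varepsilon$ (comparable to $[\nu_i]_\alpha$), and $\int_{B_1\cap\Omega_0}|T-x|^2\lesssim\varepsilon$. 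A preliminary observation is that testing the push‑forward identity against a bump supported in $B_{1/2}$, using the topological condition \eqref{eqn:topological} to localize, forces $|\lambda_0-\lambda_1|\lesssim\sqrt{\varepsilon'}$; hence the identity is an (almost) admissible competitor below. I would then introduce the nondimensional tilt‑excess
\begin{equation*}
E(r):=\inf_{b,\,S}\Big(\tfrac1{r^{d+2}}\!\int_{B_r\cap\Omega_0}\!|T(x)-b-(\mathrm{Id}+S)x|^2\,dx+\big[\text{symmetric term for }T^{-1}\text{ with }S\rightsquigarrow-S\big]\Big)+r^{2\alpha}[\nu_0]_{\alpha,B_r}^2+r^{2\alpha}[\nu_1]_{\alpha,B_r}^2,
\end{equation*}
where the infimum runs over $b\in\mathbb{R}^d$ and symmetric matrices $S$ with $\operatorname{tr}S$ fixed by $\lambda_0/\lambda_1-1$ and $Se_d\parallel e_d$ — the structure dictated by the linearized (Neumann) problem. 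Crucially the last two ``flatness'' terms are nonincreasing in $r$ and, under rescaling, scale exactly like $r^{2\alpha}$, so they act as a forcing decaying at the critical rate $r^{2\alpha}$; and $E(1)\lesssim\varepsilon'$.

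The heart of the matter is a \textbf{one‑step improvement}: there are $\theta\in(0,1)$, $\varepsilon_0>0$, $C$, depending only on $d,\alpha$, so that if $E(1)\le\varepsilon_0$ in the configuration above, then $E(\theta)\le\tfrac12\theta^{2\alpha}E(1)+\theta^{2\alpha}\big([\nu_0]_{\alpha,B_1}^2+[\nu_1]_{\alpha,B_1}^2\big)$. To prove it: \emph{(a) Almost‑minimality.} Using \eqref{eqn:topological} to localize — it guarantees that the $\Omega_0$‑mass in $B_{1/2}$ is transported into $B_1$ and conversely — and the Eulerian (Benamou–Brenier) formulation, one shows that $\tfrac1{r^{d+2}}\int_{B_r\cap\Omega_0}|T-x|^2$ is, up to $\sigma(\varepsilon_0)E(1)$ with $\sigma\to0$, the minimal Dirichlet‑type action over flux fields matching the flux of $T$ across $\partial B_r$ and satisfying the no‑flux condition along $\partial\Omega_0$; a good‑radius (mean value in $r\sim1$) argument selects a radius with controlled boundary flux. \emph{(b) Boundary harmonic competitor.} Replace each graph $\psi_i$ by the flat boundary $\{x_d=0\}$, at a cost of flatness size, and build a competitor by gluing $T$ on an annulus to (the transport induced by) the solution $h$ of $\Delta h=\lambda_0/\lambda_1-1$ in $B_r\cap\{x_d>0\}$, $\partial_{e_d}h=0$ on $\{x_d=0\}$; the mass mismatch between $|\Omega_0\cap B_r|$ and $|\Omega_1\cap B_r|$, which is of flatness size $\lesssim([\nu_0]_\alpha+[\nu_1]_\alpha)r^{d+\alpha}$, is absorbed by a small dilation / second‑Dirichlet correction keeping the competitor admissible. \emph{(c) Quasi‑orthogonality} then yields $\int_{B_r\cap\Omega_0}|T-x|^2=\int_{B_r\cap\{x_d>0\}}|\nabla h|^2+\text{error}$, and symmetrically for $T^{-1}$. \emph{(d) Decay.} Writing $h=\tfrac{\lambda_0/\lambda_1-1}{2d}|x|^2+\tilde h$ with $\tilde h$ harmonic and even across $\{x_d=0\}$, interior $C^3$‑estimates for the reflected $\tilde h$ give that $x+\nabla h$ differs from its first‑order Taylor polynomial at $0$ — which has exactly the admissible form $b+(\mathrm{Id}+S)x$, since $\partial_{e_d}h\equiv0$ on $\{x_d=0\}$ makes $D^2h(0)e_d\parallel e_d$ — by $O(r^2)$ in nondimensional $L^2(B_r)$; hence $E(\theta)\lesssim(\theta^2+\sigma(\varepsilon_0))E(1)+\theta^{2\alpha}(\text{flatness})$, and choosing $\theta$ small (legal since $\alpha<1$, so $\theta^2\le\theta^{2\alpha}$) and then $\varepsilon_0$ small gives the claim. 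I expect step (b) — the construction of an admissible transport‑type competitor near two distinct, only approximately tangent $C^{1,\alpha}$ boundaries with controlled mass mismatch, together with the quasi‑orthogonality estimate (c) up to the boundary — to be the main obstacle; it is the genuinely new content beyond the interior theory, and it is where \eqref{eqn:topological} and the $C^{1,\alpha}$‑flatness are really used.

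Finally, \textbf{iteration and conclusion}. Since the flatness part $f(r):=r^{2\alpha}([\nu_0]_{\alpha,B_r}^2+[\nu_1]_{\alpha,B_r}^2)$ itself satisfies $f(\theta r)\le\theta^{2\alpha}f(r)$, the recursion $E(\theta^{k+1})\le\tfrac12\theta^{2\alpha}E(\theta^k)+\theta^{2\alpha}f(\theta^k)$ — applied after re‑centering the linearization at each scale, which moves the optimal $(b,S)$ only by $\lesssim\theta^{k\alpha}\sqrt{\varepsilon'}$ and so keeps the smallness hypothesis valid — telescopes to the \emph{sharp} bound $E(\theta^k)\lesssim\theta^{2\alpha k}\varepsilon'$; the self‑improving forcing is exactly what prevents a loss in the exponent. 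The same one‑step lemma, re‑centered at any $q\in\partial\Omega_0\cap B_{R/16}(p)$ — where \eqref{eqn:tangency} is replaced by the near‑tangency $|\nu_0-\nu_1|\lesssim(|q-p|+r)^{\alpha}\sqrt{\varepsilon'}R^{-\alpha}$ in $B_r(q)$, which is admissible for the (suitably general) lemma — gives $E_q(r)\lesssim(r/R)^{2\alpha}\varepsilon'$; at interior points $x_0$ one instead invokes the interior $\varepsilon$‑regularity of \cite{GoldmanOtto} on $B_{\operatorname{dist}(x_0,\partial\Omega_0)/C}(x_0)$, whose smallness hypothesis follows from the decay just obtained. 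The infimizing affine maps then converge as $r\to0$ to a map $\mathrm{Id}+S_q$ at each $q\in\overline{B_{R/16}(p)\cap\Omega_0}$, which is $\nabla T(q)$; comparing the affine maps at two nearby points through a common intermediate‑scale ball gives $|S_{q_1}-S_{q_2}|\lesssim(|q_1-q_2|/R)^{\alpha}\sqrt{\varepsilon'}$, that is $R^{2\alpha}[\nabla T]_{\alpha,B_{R/16}(p)}^2\lesssim\varepsilon'$ and in particular $T\in C^{1,\alpha}(\overline{B_{R/16}(p)\cap\Omega_0})$. This is the Campanato characterization of $C^{1,\alpha}$ and completes the proof.
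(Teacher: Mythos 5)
Your proposal is essentially the paper's approach: after translation/rotation/rescaling, use the topological condition to get $L^\infty$-localization, approximate $T$ in $L^2$ by the gradient of a solution to a constant-Laplacian Poisson problem with a Neumann condition on a flattened boundary via a transport competitor plus quasi-orthogonality, then prove a one-step tilt improvement in which the $C^{1,\alpha}$-flatness enters as a forcing term decaying at rate $r^{2\alpha}$, iterate to a Campanato estimate, and finally patch with the interior theory at points away from $\partial\Omega_0$. You have correctly identified the genuinely new content (the boundary competitor construction and quasi-orthogonality up to $\partial\Omega_0$, plus the adjustments needed to keep the tangency condition after tilting), which is where the bulk of the paper's work lies; one small imprecision worth flagging is that the one-step estimate $E(\theta)\le\tfrac12\theta^{2\alpha}E(1)+\theta^{2\alpha}f(1)$ is stated too strongly — the harmonic approximation unavoidably puts a large factor of order $\varepsilon^{-1}\theta^{-(d+2)}$ in front of the flatness (this is the $C_\beta D_R$ in the paper's Proposition \ref{prop:onestep}), but since $f(\theta^k)\le\theta^{2\alpha k}f(1)$ this large constant is harmless for the iteration after shrinking the initial smallness threshold.
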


Recall that the corresponding convex potential $u$ (i.e., $\nabla u=T$ a.e.) solves the Monge-Amp\`{e}re equation in Brenier's sense (cf.\ \cite[Section 4.1.4]{Villani}):
\begin{equation}\label{eq:MA}
	\det(\nabla^2u) =\lambda_0/\lambda_1 \quad \text{in }\Omega_0, \quad \nabla u(\Omega_0) =\Omega_1.
\end{equation}
Our proof is crucially based on the observation that in a quantitative sense the Monge-Amp\`{e}re equation may be linearized by a Poisson equation (cf.\ \cite[Exercise 4.1]{Villani}) with a certain (linearized) Neumann boundary condition on $\partial\Omega_0$.

Theorem \ref{thm:epsilonregularity}, however, highlights that in view of regularity theory there is a significant difference between linear and nonlinear boundary conditions.
We first notice that in terms of the potential $u$, Theorem \ref{thm:epsilonregularity} asserts $u\in C^{2,\alpha}$ under $\partial\Omega_i\in C^{1,\alpha}$.
This should be compared with the fact that $\partial\Omega_i\in C^{2,\alpha}$ is necessary for $u\in C^{2,\alpha}$ in case of a Neumann boundary condition.
We briefly argue how this difference occurs, focusing on $d=2$.
Suppose locally that $\partial\Omega_0$ and $\partial\Omega_1$ coincide (but are different globally), and also the generic behavior that $\nabla u(\partial\Omega_0)=\partial\Omega_1$ (``boundary goes to boundary'').
Then the linearized boundary condition should be $\partial u/\partial\nu=0$, that is, $\gamma'=(\nabla u\circ\gamma)/|\nabla u\circ\gamma|$ (if $\nabla u\neq0$), where $\gamma$ denotes the unit speed parametrization of the boundary curve of $\Omega_0$; roughly speaking, this is a ``1st order = 1st order'' relation, so necessarily $\gamma\in C^{2,\alpha}$ if $u\in C^{2,\alpha}$.
However, the nonlinear boundary condition $\nabla u(\partial\Omega_0)=\partial\Omega_1$ means that $g(\partial_1 u(z,g(z)))=\partial_2u(z,g(z))$ if $\partial\Omega_0$ and $\partial\Omega_1$ are locally represented by the graph of $g$;
this is a ``0th order = 1st order'' relation, so just $g\in C^{1,\alpha}$ is required for $u\in C^{2,\alpha}$.

In Theorem \ref{thm:epsilonregularity}, the topological condition is unremovable in the sense that, without this condition, our $L^2$-smallness assumption does not rule out a ``separation of boundary layer'' as demonstrated in Remark \ref{rem:exseparation}.
This phenomenon represents one essential difference from the interior $\varepsilon$-regularity theory on $L^2$-level.
The tangency condition on the other hand is not restrictive thanks to the affine invariance of the optimal transportation problem (see Remark \ref{rem:tangency}).

Theorem \ref{thm:epsilonregularity} generalizes Chen and Figalli's boundary $\varepsilon$-regularity theory on $L^\infty$-level \cite[Theorem 2.2]{ChenFigalli} in a certain direction.
Their result in particular asserts $C^{1,\alpha'}$-regularity of $T$ for some (non-explicit) $\alpha'\in(0,1)$ when $\Omega_i$, $i=0,1$, are $C^2$-domains.
Compared to their result, on one hand, our assertion is stronger by giving the (explicit) sharp exponent $\alpha$, and on the other hand, our assumption is not only weaker but also described in a completely local way (see Remark \ref{rem:chenfigalli} for details).
We should however clarify that Chen and Figalli's method can deal with non-quadratic costs (resp.\ non-constant densities) as small perturbations from the quadratic cost (resp.\ the constant densities), which are not covered by our result.
We are confident that our approach is also applicable to non-constant densities as in \cite{GoldmanOtto}, and hope that it can be extended to non-quadratic costs, as in \cite{OttoProdhommeRied}.

Theorem \ref{thm:epsilonregularity} may be also regarded as an extension of Jhaveri's recent boundary $\varepsilon$-regularity result \cite[Theorem 2.6]{Jhaveri} \`{a} la Chen and Figalli, which asserts (lower order) $C^{0,\beta}$-regularity of $T$ for any $\beta\in(0,1)$, only assuming $C^{1,\alpha}$-regularity for domains.
In the same paper, Jhaveri gives several sharp counterexamples, in particular showing that for any small $\varepsilon>0$ there are smooth domain $\Omega_0$ and $\Omega_1$ that are $\varepsilon$-close in $C^1$-sense for which the optimal transport map is even discontinuous (in fact, they may be close in $C^{1,\alpha'}$-sense for small $\alpha'(\varepsilon)$).
Our assumption of $C^{1,\alpha}$-boundary regularity is thus sharp for establishing an $\varepsilon$-regularity theory.

The regularity of optimal transport maps is one of the most active area in the study of optimal transportation.
Although Brenier's theorem ensures existence and uniqueness of transport maps between general probability densities $\rho_0$ and $\rho_1$, their regularity sensitively depends on not only the regularity of $\rho_i$ but also the global geometry of ${\rm supp}\rho_i$ ($=\overline{\Omega_i}$).
Concerning the global regularity of $T$ for the quadratic cost, it is shown by Caffarelli \cite{Caffarelli2} that $T\in C^{0,\delta}(\overline{\Omega_0})$ for small $\delta>0$ when $\rho_i\in L^\infty(\Omega_i)$ is uniformly positive and $\Omega_i$ is bounded and convex for $i=0,1$; recently, Savin and Yu \cite{SavinYu} strengthen this regularity up to $C^{0,\delta}(\overline{\Omega_0})$ for any $\delta\in(0,1)$ in case of $d=2$ and constant densities.
Beyond the above lower order regularity, after Delano\"{e} \cite{Delanoe} and Urbas \cite{Urbas}, Caffarelli \cite{Caffarelli3} also shows that $T\in C^{1,\alpha}(\overline{\Omega_0})$ if in addition $0<\rho_i\in C^{0,\alpha}(\overline{\Omega_i})$ and $\Omega_i$ is $C^2$ uniformly convex for $i=0,1$.
In a recent study, Chen, Liu, and Wang \cite{ChenLiuWang1} ensure the same assertion even for $C^{1,1}$ convex domains (and small perturbations of them \cite{ChenLiuWang3}); when $d=2$, they also verify the same result for $C^{1,\alpha}$ convex domains \cite{ChenLiuWang2}.
The convexity assumption is known to be essential for global regularity (see e.g.\ \cite{Caffarelli1,MaTrudingerWang}); in this paper, in view of the local character of an $\varepsilon$-regularity result, we do not restrict our attention to convex domains.
Without convexity (or near convexity), the best known approach seems to be obtaining partial regularity of $T$ as developed in \cite{Figalli,FigalliKim,DePhilippisFigalli2,GoldmanOtto}.
In particular, the latter two studies by de~Philippis-Figalli \cite{DePhilippisFigalli2} and by Goldman and the last author \cite{GoldmanOtto} are straightforward consequences of an interior $\varepsilon$-regularity theory.
The aforementioned theory by Chen and Figalli \cite{ChenFigalli} is a boundary counterpart of de~Philippis and Figalli's interior theory, and seems to be a first breakthrough on boundary regularity without convexity (see also \cite{Jhaveri}).
From this point of view, our theory is a boundary counterpart of Goldman and the last author's interior theory \cite{GoldmanOtto}.
It is challenging and still open to establish boundary partial regularity, as already mentioned in \cite{ChenFigalli}.
We also refer the reader to a general exposition \cite{DePhilippisFigalli1}.

In summary, all the above results assume at least $C^{1,1}$-regularity of boundaries in order to obtain $C^{1,\alpha}$-type boundary regularity of $T$, except for $2$-dimensional convex domains, and some of the arguments crucially rely on convexity of domains.
Our theory requires only the optimal $C^{1,\alpha}$-regularity of boundaries in every dimension, and - as is natural for an $\varepsilon$-regularity theory - no condition on the global geometry of domains.

Our proof of Theorem \ref{thm:epsilonregularity} is purely variational, as opposed to most of the aforementioned results that view the regularity of optimal transportation as a specific instance of regularity of the Monge-Amp\`{e}re (type) equation and heavily use the maximum principle for the latter.
The general flow of our proof follows Goldman and the last author's interior theory \cite{GoldmanOtto}, which mimics de Giorgi's strategy for minimal surfaces (see e.g.\ \cite{Maggi}); namely, we compare the transport map with a harmonic gradient vector field, and obtain a key one-step improvement estimate, which is then iteratively used to conclude a Campanato-type estimate.
However, in all the steps we encounter several delicate issues coming from the boundaries (besides the aforementioned topological condition issue).
One of the main difficulties arises in the main harmonic approximation estimate (Proposition \ref{prop:harmoniclagrangian}), for which we need a new construction of a variational competitor near the boundaries; in the construction, we combine our new ideas with several techniques developed by Goldman, Huesmann, and the last author \cite{GoldmanHuesmannOtto} to deal with rough densities precisely.
Another delicate issue appears in the one-step improvement estimate (Proposition \ref{prop:onestep}), in which our construction of an affinely transformed (``tilted'') map has to allow for differing values of the two constant densities (see Remark \ref{rem:nonunitvalue}); this is one reason why we state our main result for constant densities of {\em different} values $\lambda_0$, $\lambda_1$ as opposed to the interior theory \cite{GoldmanOtto}.

We finally discuss the similarities and differences between the comparison-based
and the variational approaches to $\varepsilon$-regularity. We do this on the
level of the interior regularity by comparing \cite{DePhilippisFigalli2} and \cite{GoldmanOtto},
not addressing the additional difficulties in \cite{DePhilippisFigalli2} due to
a non-Euclidean cost functional; here, for the sake of this discussion, we deal with non-constant densities $\rho_0$ and $\rho_1$.
Both approaches to $C^{2,\alpha}$-regularity of the convex potential $u$ employ a Campanato
iteration, meaning that on every (dyadic) length scale, the corresponding regularity of $u$
is inferred from the inner regularity of a suitable solution $v$ of a reference equation.
In passing from one scale to the next smaller one, both approaches
crucially rely on the affine invariance of the Monge-Amp\`ere equation and optimal
transportation, respectively. Both approaches use Campanato's characterization of H\"older
semi-norms in terms of much weaker norms.
The most fundamental difference consists in the choice of the reference equation:
In case of \cite{DePhilippisFigalli2}, it is the Monge-Amp\`ere equation with constant
r.h.s.\ $\det\nabla^2v=1$; for \cite{GoldmanOtto} it is the Poisson equation
$\Delta\phi=\rho_1-\rho_0$ with r.h.s.~given by the difference of the (rescaled) densities $\rho_0$ and $\rho_1$
via $v=\phi+\frac{1}{2}|x|^2$.
The main advantage of this quantitative linearization of Monge-Amp\`ere by Poisson
in \cite{GoldmanOtto} is that the smallness assumption linearly appears in the upper bound:
\begin{align*}
	\varepsilon':= \Big(\int_{B_2}|\nabla u-x|^2\Big)^\frac{1}{2} + [\rho_0]_{\alpha,B_2}+[\rho_1]_{\alpha,B_2}\ll 1 \ \Longrightarrow \ [\nabla^2 u]_{\alpha,B_1}\lesssim
	\varepsilon',
\end{align*}
whereas in \cite{DePhilippisFigalli2}, under the equivalent smallness assumption one only obtains the less specific estimate:
\begin{align*}
	\sup_{B_2}\Big|u-\frac{1}{2}|x|^2\Big|+[\rho_0]_{\alpha,B_2}+[\rho_1]_{\alpha,B_2}\ll 1 \ \Longrightarrow \ [\nabla^2 u]_{\alpha,B_1}\lesssim 1.
\end{align*}
A further substantial difference is in how the closeness of $u$ and $v$ is captured:
In \cite{DePhilippisFigalli2}, closeness follows from the comparison principle
for the Monge-Amp\`ere equation and
thus is formulated in terms of $\sup|u-v|$. In \cite{GoldmanOtto}, the closeness
of $u$ and $v$ follows variationally by constructing a local competitor for the
transportation cost functional. In fact, one uses the harmonic gradient $\nabla\phi$ of
$\Delta\phi=\mathrm{const.}$ as a competitor for the displacement $\nabla u-x$.
As a consequence, the closeness is monitored in terms of
$(\int|\nabla u-\nabla v|^2)^\frac{1}{2}$.
In this respect, \cite{GoldmanOtto} is close in spirit to
de Giorgi's approach to $\varepsilon$-regularity for minimal surfaces, which also is based
on constructing a competitor of for the (nonlinear) area functional with help of
harmonic graphs.
Another difference is that \cite{DePhilippisFigalli2} obtains $u\in C^{2,\alpha}$ in
three bootstrapping steps, as opposed to the single step in \cite{GoldmanOtto}:
In a first step, $u\in C^{1,\beta}$ for any $\beta<1$ is obtained
\cite[Theorem 4.3]{DePhilippisFigalli2}; in this qualitative
step, a $v$ close to $u$ is obtained by compactness \cite[Lemma 4.1]{DePhilippisFigalli2},
and the inner estimates for $v$ rely on the $\varepsilon$-regularity theory
for the Monge-Amp\`ere equation with constant densities \cite{FigalliKim}.
In a second more quantitative step, $u\in C^{1,1}$ is obtained
\cite[Theorem 5.3 up to last step]{DePhilippisFigalli2}; in this step,
$v$ is constructed by solving a boundary problem on a section of $u$
and quantitatively compared to $u$ by the comparison principle
\cite[Proposition 5.2]{DePhilippisFigalli2}. The previously established $C^{1,\beta}$-regularity
is used in order to leverage the H\"older continuity of the target density $g$
on the (possibly quite eccentric) image of the section under $\nabla u$.
In a third and short quantitative step, one obtains $C^{2,\alpha}$-regularity
of $u$ from the $C^{\alpha}$-regularity of $\rho_0$ and $\rho_1$,
since the image of the section is now known to be not too eccentric.

This paper is organized as follows.
In Section \ref{sect:preliminaries} we present key steps of the proof of Theorem \ref{thm:epsilonregularity}.
Section \ref{sect:Linftytheory} is devoted to $L^2$-$L^\infty$ theory.
In Sections \ref{sec:reduction} and \ref{sect:eulerian} we prove the main harmonic approximation estimate.
We finally conclude the H\"{o}lder regularity in Theorem \ref{thm:epsilonregularity} in Section \ref{sect:lagrangian}.

\subsection*{Acknowledgments}
The authors would like to thank the anonymous referees for their careful reading and constructive comments.
TM is deeply grateful to the Max Planck Institute for Mathematics in the Sciences for its hospitality: This research is mostly done when he was a postdoctoral fellow at the MPI MIS.
He is in part supported by JSPS KAKENHI Grant Numbers 18H03670 and 20K14341, and by Grant for Basic Science Research Projects from The Sumitomo Foundation.

\section{Preliminaries and main steps}\label{sect:preliminaries}

\subsection{Notation}

Throughout this paper we use the following notations.
The symbol $\lesssim$ (resp.\ $\gtrsim$) means that $\leq$ (resp.\ $\geq$) holds up to a universal constant $C$, where in this paper we call $C$ universal if it only depends on $d$ and $\alpha$ (if applicable); for example, $f\lesssim g$ means that $f\leq Cg$.
The symbol $\sim$ means that both $\lesssim$ and $\gtrsim$ hold.
An assumption of the form $f\ll1$ means that there is a universal constant $\varepsilon>0$ depending only on $d$ and $\alpha$ (if applicable) such that if $f\leq\varepsilon$, then the conclusion holds.
We also use notations like $\ll_{\beta}$, $\lesssim_{\beta}$, which mean that the universal constants in $\ll$, $\lesssim$ also depend on not only $d$ and $\alpha$ but also an additionally given parameter $\beta$.
We denote by $|E|$ the Lebesgue measure of $E\subset\mathbb{R}^d$, and by $\chi_{E}$ the characteristic function.
As in the introduction, $B_R(p)\subset\mathbb{R}^d$ stands for the open ball of radius $R>0$ centered at $p\in\mathbb{R}^d$, and abbreviate as $B_R$ if $p=0$.
We often drop the integral measures in integrals.
The following symbol means the average of a function $f$:
$$\fint_A f := \frac{1}{|A|}\int_{A} f.$$

\subsection{Well-preparedness}

We introduce the term {\em well-prepared} to shorten forthcoming statements.
In what follows, without loss of generality, we may focus on the case that $p=0$ by translation.
In addition, by re-defining as $\lambda_0':=1$ and $\lambda_1':=\lambda_1/\lambda_0$, we may consider only the case of $\lambda_0=1$ and $\lambda_1\in[1/4,4]$.

\begin{definition}[Well-prepared transport map]
	Let $R>0$.
	A map $T:\Omega_0\to\Omega_1$ is {\em well prepared in $B_R$} if $T$ is an optimal transport map from $\chi_{\Omega_0}$ to $\lambda\chi_{\Omega_1}$ for some value $\lambda\in[1/4,4]$, where $\Omega_0,\Omega_1\subset\mathbb{R}^d$ are open sets with $C^{1,\alpha}$-boundary, such that the tangency condition (\ref{eqn:tangency}) and the topological condition (\ref{eqn:topological}) hold in $B_R$.
\end{definition}

For an optimal transport map $T$ that is well prepared in $B_{R}$, we set
\begin{align}\label{wg12}
	E(\Omega_0,\Omega_1,T,R) &:=\frac{1}{R^2}\fint_{B_R}|T-x|^2\chi_{\Omega_0}dx,\\
	D(\Omega_0,\Omega_1,R) &:=R^{2\alpha}\left([\nu_0]_{\alpha,B_{R}(p)}^2+[\nu_1]_{\alpha,B_{R}(p)}^2\right).
\end{align}
The former is the localized and nondimensionalized $L^2$-cost of $T$, and the latter is determined by given data of densities, measuring deviation from being flat boundaries in $C^{1,\alpha}$-sense.
We often abbreviate them as $E_R$ or $D_R$: Note that the smallness assumption in Theorem \ref{thm:epsilonregularity} is equivalent to $E_R+D_R\ll1$.
We also often drop $R$ and use $E$ or $D$ when $R=1$: Assuming $R=1$ is not restrictive thanks to the scale invariance that
$$E(\Omega_0,\Omega_1,T,R)=E(\widetilde{\Omega}_0,\widetilde{\Omega}_1,\widetilde{T},1),\quad D(\Omega_0,\Omega_1,R)=D(\widetilde{\Omega}_0,\widetilde{\Omega}_1,1),$$
where $\widetilde{\Omega}_i:=R^{-1}\Omega_i$ ($i=0,1$) and $\widetilde{T}(\widetilde{x}):=R^{-1}T(R\widetilde{x})$ for $\widetilde{x}\in\widetilde{\Omega}_0$.

\subsection{Key ingredients}

We now explain the general flow of the proof of Theorem \ref{thm:epsilonregularity}, exhibiting several key steps.

The first key ingredient, Proposition \ref{prop:Linfty}, consists of several $L^2$-$L^\infty$ type estimates, which are frequently used throughout this paper.
Proposition \ref{prop:Linfty} is proved in Section \ref{sect:Linftytheory}: The importance of the topological condition is clarified in this part.

\begin{proposition}[$L^\infty$-bounds]\label{prop:Linfty}
	Let $T$ be an optimal transport map well prepared in $B_1$.
	If $E+D\ll1$, then
	\begin{align}\label{eqn:Linfty1}
		\sup_{x\in\Omega_0\cap B_{1/2}}|T(x)-x|+\sup_{y\in\Omega_1\cap B_{1/2}}|T^{-1}(y)-y|\lesssim E^{1\over d+2}+D^{1\over 2},
	\end{align}
	and for any $t\in[0,1]$, the map $T_t(x):=tT(x)+(1-t)x$ satisfies that
	\begin{align}\label{eqn:Linfty2}
		T_t(\Omega_0\cap B_{1/4})\subset B_{1/2}, \quad T_t^{-1}(\Omega_1\cap B_{1/4})\subset B_{1/2},
	\end{align}
	where $T_t^{-1}$ is interpreted in the sense of preimage.
\end{proposition}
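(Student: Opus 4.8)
The plan is to prove the $L^\infty$-displacement bound \eqref{eqn:Linfty1} first, and then deduce the topological inclusions \eqref{eqn:Linfty2} as a consequence. The strategy for \eqref{eqn:Linfty1} is the standard $L^2$-to-$L^\infty$ upgrade for optimal transport maps via monotonicity, adapted to the boundary setting. Recall that the support of the optimal plan is cyclically monotone, so for a.e.\ $x,x'\in\Omega_0$ one has $(T(x)-T(x'))\cdot(x-x')\geq 0$. Fix a point $x_0\in\Omega_0\cap B_{1/2}$ where $|T(x_0)-x_0|$ is (nearly) maximal, say equal to $M$. Monotonicity then forces $T$ to map a whole region of definite size --- roughly a cone or half-ball of opening $\sim M$ near $x_0$ --- by a displacement whose component in the direction $e:=(T(x_0)-x_0)/M$ is bounded below by a fixed fraction of $M$. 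Integrating $|T-x|^2$ over this region gives a lower bound of the form $E \gtrsim M^{d+2}$ (the volume of the region is $\sim M^d$ and the integrand is $\gtrsim M^2$ there), hence $M\lesssim E^{1/(d+2)}$. The role of $D$ and the tangency/topological conditions here is to guarantee that this region of definite size actually lies inside $\Omega_0$ (or can be controlled near $\partial\Omega_0$): when $x_0$ is within distance $\sim M$ of the boundary, the flatness of $\partial\Omega_0$ up to error $D^{1/2}$ means $\Omega_0$ looks like a half-space there, and a half-ball still has volume $\sim M^d$; the topological condition \eqref{eqn:topological} ensures the image stays in $B_R$ so that the cost integral in \eqref{eqn:smallness}/\eqref{wg12} actually sees this mass. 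Carrying this out symmetrically for $T^{-1}$ (which is again a well-prepared optimal map by the tangency condition $\nu_0(p)=\nu_1(p)$), and absorbing the boundary-localization error --- which is where the $D^{1/2}$ term enters --- gives \eqref{eqn:Linfty1}.

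For \eqref{eqn:Linfty2}, the plan is purely geometric once \eqref{eqn:Linfty1} is in hand. Write $\delta:=E^{1/(d+2)}+D^{1/2}$, which is $\ll 1$ by the smallness hypothesis. For $x\in\Omega_0\cap B_{1/4}$ and $t\in[0,1]$ we have $|T_t(x)-x|=t|T(x)-x|\leq |T(x)-x|$; but $x\in B_{1/4}\subset B_{1/2}$, so \eqref{eqn:Linfty1} (more precisely, its validity at scale $1/2$, which follows since $B_{1/4}\subset B_{1/2}$ and the displacement bound was proved on $B_{1/2}$) gives $|T(x)-x|\lesssim\delta$, hence $|T_t(x)|\leq |x|+C\delta < 1/4 + C\delta < 1/2$ provided $\delta$ is small enough. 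This proves the first inclusion. For the second, one uses the same estimate applied to $T^{-1}$ on $\Omega_1\cap B_{1/4}$: if $T_t(x)=y$ with $x\in\Omega_0$, then $x=T_t^{-1}(y)$ and $|x-y|=|T_t(x)-x|=t|T(x)-x|\leq |T(x)-x|\lesssim \delta$ whenever $x\in B_{1/2}$; a short bootstrap (first noting $x$ cannot be too far from $y\in B_{1/4}$, using that the displacement of $T$ is uniformly small on $B_{1/2}$) keeps $x\in B_{1/2}$ and then localizes $x$ to $B_{1/2}$, giving $T_t^{-1}(\Omega_1\cap B_{1/4})\subset B_{1/2}$. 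Here one must be slightly careful about the meaning of $T_t^{-1}$ as a preimage (since $T_t$ need not be injective a priori at this stage), but the displacement bound makes any preimage lie in a small neighborhood, which suffices.

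The main obstacle, as flagged in the text right before the statement (``the importance of the topological condition is clarified in this part''), is the boundary localization in the first step: without \eqref{eqn:topological} the cost integral over $B_R$ may simply fail to see the mass that monotonicity pushes far away --- a ``separated boundary layer'' can carry a large displacement while contributing little to $\int_{B_R}|T-x|^2\chi_{\Omega_0}$ because its image has left $B_R$. So the delicate point is to combine cyclical monotonicity with the topological condition to trap a volume of order $M^d$ \emph{whose image remains inside $B_R$}, and to handle the case where the near-maximal point $x_0$ sits close to $\partial\Omega_0$, using the $C^{1,\alpha}$-flatness encoded in $D$ to replace the half-ball volume estimate. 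Everything else --- the monotonicity inequality, the volume-times-integrand bound, and the elementary geometry of \eqref{eqn:Linfty2} --- is routine.
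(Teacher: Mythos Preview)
Your overall strategy is right, but there are two genuine gaps.

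First, for \eqref{eqn:Linfty1}: the monotonicity argument you sketch only bounds $(T(x_0)-x_0)\cdot e$ for directions $e$ pointing \emph{into} $\Omega_0$ (roughly $e\cdot e_1\geq 1/2$), because the cone of volume $\sim M^d$ you invoke must lie inside $\Omega_0$. A ``half-ball'' does not rescue the outward direction: the region forced by monotonicity to have large displacement is a cone \emph{in the direction $e$}, and when $e$ is outward that cone leaves $\Omega_0$. Your account of the role of \eqref{eqn:topological} --- ``so that the cost integral actually sees this mass'' --- is also off: the cost is integrated over $x\in B_R$, so where the image lands is irrelevant to whether the cost sees it. The correct mechanism is different: the topological condition puts $x_1=T(x_0)$ into $\overline{\Omega_1}\cap B_1$, and then the local graph representation of $\partial\Omega_1$ gives $x_1\cdot e_1\geq -\delta$ directly, so that the outward component $(x_1-x_0)\cdot(-e_1)=x_0\cdot e_1 - x_1\cdot e_1$ is bounded by $x_0\cdot e_1+\delta$, which is $\lesssim E^{1/(d+2)}+\delta$ once one splits into the interior case $\mathrm{dist}(x_0,\partial\Omega_0)\geq E^{1/(d+2)}$ (where the cone argument already gives the full bound) and the boundary case. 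A related point: the argument is \emph{not} symmetric under swapping $T\leftrightarrow T^{-1}$, since $E$ integrates over $x_0\in B_R$ only; the backward estimate still integrates monotonicity over balls in $\Omega_0$ (now centered near the target point $x_1$) and needs an additional step to pass from $y_0-x_1$ to $y_1-x_1$.

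Second, for the backward inclusion in \eqref{eqn:Linfty2}: your ``short bootstrap'' is circular. Given $T_t(x)=y\in B_{1/4}$ you want $x\in B_{1/2}$, but applying $|T(x)-x|\lesssim\delta$ already requires $x\in B_{1/2}$. The paper breaks the circle with a separate monotonicity step: fix a reference pair $(x_0',x_1')\in\mathrm{supp}\,\pi$ with $|x_0'|,|x_1'|\lesssim\delta$ (which exists by the forward bound), and use $(x_1-x_1')\cdot(x_0-x_0')\geq 0$ together with $|t(x_1-x_1')+(1-t)(x_0-x_0')|\lesssim 1/4$ to deduce that $\min\{|x_0-x_0'|,|x_1-x_1'|\}$ is small, so that at least one of $x_0,x_1$ lies in $B_{2/5}$; the already--established $L^\infty$ bound then localizes the other one. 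This step is not routine and is exactly where the example of a separated boundary layer shows that some such argument is unavoidable.
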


Employing the $L^\infty$-bounds, we then establish the main estimate, which roughly means that the optimal transport map $T$ is well approximated by a harmonic gradient, that is, the gradient of a function of constant Laplacian; the proof is given in Sections \ref{sec:reduction} and \ref{sect:eulerian}.

\begin{proposition}[Harmonic approximation]\label{prop:harmoniclagrangian}
	For any $\varepsilon\in(0,1]$ there exists $\eta=\eta(d,\varepsilon)\in(0,1]$ with the following property:
	Let $T$ be an optimal transport map well prepared in $B_1$.
	If
	$$E+D\le\eta,$$
	then there exists a harmonic gradient $\nabla\phi$ on $\overline{B_{r}}$ such that
	\begin{align}
		\int_{B_{r}}|T-x-\nabla\phi|^2\chi_{\Omega_0}dx &\le \varepsilon E+\frac{C}{\varepsilon}D, \label{eqn:harmonicmainestimate}\\
		\int_{B_{r}}|\nabla\phi|^2 \leq CE, \label{eqn:harmonicDirichlet}
	\end{align}
	where $C=C(d)>0$ and $r=r(d)\in(0,1)$ depend only on $d$, and in addition, $\phi$ is symmetric with respect to the plane $\{x\cdot\nu=0\}$, i.e.,
	\begin{align}\label{eqn:harmonicsymmetry}
		\phi(x)=\phi(x+2(x\cdot\nu)) \quad \text{for all }x\in\overline{B}_r,
	\end{align}
	where $\nu:=\nu_0(0)\ (=\nu_1(0))$ denotes the outer normal at the origin.
\end{proposition}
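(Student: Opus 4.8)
The plan is to establish Proposition \ref{prop:harmoniclagrangian} by constructing a variational competitor for the transport cost functional, following the de Giorgi-type scheme of \cite{GoldmanOtto,GoldmanHuesmannOtto} but adapted to the half-space geometry imposed by the tangency condition. After translating and rotating so that $p=0$ and $\nu_0(0)=\nu_1(0)=\nu=e_d$, the tangency and $C^{1,\alpha}$-smallness assumptions say that near the origin both $\partial\Omega_0$ and $\partial\Omega_1$ are $C^{1,\alpha}$-graphs over $\{x_d=0\}$ whose slopes are controlled by $D^{1/2}$. The first step I would carry out is to pass from the Lagrangian picture to a Eulerian one: let $T_t(x)=tT(x)+(1-t)x$ (which by Proposition \ref{prop:Linfty} maps $\Omega_0\cap B_{1/4}$ into $B_{1/2}$), and consider the interpolating measures $\rho_t=(T_t)_\sharp(\chi_{\Omega_0})$ together with the momentum vector field $j_t$, so that the Benamou-Brenier identity gives $\int_0^1\!\!\int |j_t|^2/\rho_t \,dt \le \int |T-x|^2\chi_{\Omega_0}$ and $(\rho,j)$ solves the continuity equation $\partial_t\rho+\nabla\cdot j=0$ with $\rho_0=\chi_{\Omega_0}$, $\rho_1=\lambda\chi_{\Omega_1}$. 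This is where the technology of \cite{GoldmanHuesmannOtto} for handling the fact that the densities are only characteristic functions (hence "rough") enters; I would use their mollification and truncation arguments to replace $(\rho,j)$ by a well-behaved nearly-solenoidal flux with controlled defect.

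Once the problem is Eulerian, the key object is the solution $\phi$ of the Poisson equation $\Delta\phi = \lambda\chi_{\Omega_1}-\chi_{\Omega_0}$ on the relevant half-ball-like region, with a \emph{linearized Neumann condition} on the flat boundary piece $\{x_d=0\}\cap B_r$ encoding "boundary goes to boundary"; by the symmetry built into this condition (reflecting $\Omega_0$ and $\Omega_1$ across $\{x_d=0\}$ and exploiting that the data are nearly symmetric up to an error $O(D^{1/2})$ coming from the graph slopes) one extends $\phi$ by even reflection and thereby obtains the symmetry \eqref{eqn:harmonicsymmetry}. The r.h.s.\ $\lambda\chi_{\Omega_1}-\chi_{\Omega_0}$ is supported in a thin layer near $\partial\Omega_0\cup\partial\Omega_1$ of width $O(D^{1/2})$ plus it carries the mass defect $\lambda-1$; standard elliptic estimates then give $\int_{B_r}|\nabla\phi|^2 \lesssim E + D$, which after absorbing $D$ yields \eqref{eqn:harmonicDirichlet}. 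The harmonic-gradient property ($\nabla\phi$ is a gradient of a function of constant Laplacian — here the constant is essentially $\lambda-1$, small) is immediate from the construction, and the more refined statement that $\nabla\phi$ is close to being harmonic in the interior is what makes it a legitimate competitor.

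The heart of the proof — and the step I expect to be the main obstacle — is \textbf{constructing the competitor} that simultaneously is admissible (it must be a flux connecting $\chi_{\Omega_0}$ to $\lambda\chi_{\Omega_1}$) and has cost close to $\int|\nabla\phi|^2$, so that minimality of $T$ forces $\int|T-x-\nabla\phi|^2 \lesssim \varepsilon E + \varepsilon^{-1}D$. In the interior theory one simply uses the flux $(1-t)\chi_{\Omega_0}+t\lambda\chi_{\Omega_1}$ together with momentum $\nabla\phi$ suitably cut off; near the boundary this fails because the competitor must respect the curved boundaries $\partial\Omega_0,\partial\Omega_1$, and gluing the "harmonic" interior competitor to the identity-like behavior forced on the boundary layer produces error terms of exactly the borderline size $R^{2\alpha}[\nu_i]_\alpha^2 = D$. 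This is why the estimate \eqref{eqn:harmonicmainestimate} must be of the form $\varepsilon E + C\varepsilon^{-1} D$ rather than $\varepsilon(E+D)$: the boundary-layer construction contributes an irreducible $D$-term, and one uses Young's inequality to split the cross terms, paying a factor $\varepsilon^{-1}$ on $D$. Carrying out this gluing — choosing the correct cutoff scale (a small universal $r$), estimating the momentum needed to push mass across the mismatch between the flat model boundary and the true $C^{1,\alpha}$-graph, and controlling the resulting defect in the continuity equation using the $L^2$-$L^\infty$ bounds of Proposition \ref{prop:Linfty} together with the trace/Hardy-type estimates of \cite{GoldmanHuesmannOtto} — is the technically demanding part, and it is the content of Sections \ref{sec:reduction} and \ref{sect:eulerian}. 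Finally, one returns from the Eulerian competitor to a Lagrangian bound on $\int_{B_r}|T-x-\nabla\phi|^2\chi_{\Omega_0}$ by the same comparison of Benamou-Brenier energies, closing the proof.
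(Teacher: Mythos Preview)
Your high-level strategy is essentially the one the paper follows: pass to the Eulerian description via Benamou--Brenier, compare the transport cost to a Dirichlet energy by constructing a local competitor, and translate back to a Lagrangian estimate. However, several concrete choices in your plan diverge from the paper's proof in ways that would leave gaps.

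First, your $\phi$ is not the one the paper uses, and your $\phi$ is \emph{not a harmonic gradient}. The paper (after reducing to $\lambda=1$) defines $\phi$ as the solution of a Neumann problem with \emph{constant} Laplacian on a half-cube $Q\cap\{x_1>0\}$: $\Delta\phi=c$, $\nu\cdot\nabla\phi=\bar f$ on $\partial Q\cap\{x_1>0\}$, and $\nu\cdot\nabla\phi=0$ on the flat piece $\{x_1=0\}$, where $\bar f$ is the (time-averaged, suitably pruned) \emph{normal flux of $j$ across $\partial Q$}. This is what makes $\nabla\phi$ genuinely harmonic after even reflection, and what yields $\int|\nabla\phi|^2\lesssim\int_{\partial Q}\bar f^2\lesssim E$ directly via $L^2$ maximal regularity for the Neumann problem. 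Your $\phi$, with $\Delta\phi=\lambda\chi_{\Omega_1}-\chi_{\Omega_0}$, has a non-constant right-hand side and is therefore not a harmonic gradient; you acknowledge this, but do not say how to produce from it the actual harmonic gradient the proposition demands, nor how the sharp bound $\int|\nabla\phi|^2\lesssim E$ (not merely $E+D$) would follow.

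Second, you omit the \emph{approximate orthogonality} step (Lemma~\ref{Leorth}), which is the mechanism by which the competitor argument closes: one shows
\[
\int_{(0,1)\times Q}\frac{1}{\rho}\,|j-\rho\nabla\phi|^2 \;\lesssim\; \int_{(0,1)\times Q}\frac{1}{\rho}\,|j|^2 \;-\; \int|\nabla\phi|^2 \;+\; \mbox{error},
\]
so that it suffices to bound $\int\frac{1}{\rho}|j|^2$ from above by $\int|\nabla\tilde\phi|^2+\mbox{error}$ via a competitor, where $\tilde\phi$ solves a \emph{second} Neumann problem carrying the additional flux data $\bar g=g_0-g_1$ on an interface $\{x_1=\delta\}$, and then to compare the Dirichlet energies of $\phi$ and $\tilde\phi$. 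This two-potential orthogonality-plus-competitor structure is what produces the clean split $\varepsilon E+C\varepsilon^{-1}D$; it is not visible in your outline, where you seem to use minimality of $T$ against a single competitor directly.

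Third, the paper's competitor is considerably more elaborate than ``linear interpolation of densities plus $\nabla\phi$'': it is a sum of six pieces (kept trajectories, main, boundary, singular, initial, and terminal constructions), precisely because the flux across $\partial Q$ in the thin boundary strip $\{|x_1|<\delta\}$ and in short initial/terminal time layers must be handled separately. Your gluing description is too coarse to locate where the errors of size $\varepsilon E$ versus $\varepsilon^{-1}D$ actually arise. In short, your plan is on the right track, but the specific $\phi$ you propose and the missing orthogonality structure are genuine gaps that would have to be repaired before the argument goes through.
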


The above harmonic approximation estimate is used for obtaining the so-called one-step improvement result, i.e., the quantitative closeness of $T$ at a scale $R$ is improved at a smaller scale $\theta R$ after an affine change of coordinates.
The main ingredient is the following affine invariance: For an optimal transport map $T:\Omega_0\to\Omega_1$ from $\chi_{\Omega_0}$ to $\lambda\chi_{\Omega_1}$, where $\lambda>0$, and for a matrix $B\in\R^{d\times d}$ and a vector $b\in\mathbb{R}^d$, if we let
\begin{align}\label{eqn:onestep0}
	\hat{T}(\hat{x}) &:=B(T(B^*\hat{x})-b),\\
	\hat{\Omega}_0 &:= B^{-*}\Omega_0,\quad \hat{\Omega}_1:=B(\Omega_1-b),\quad \hat{\lambda}:=\lambda|\det B|^{-2},\nonumber
\end{align}
where $B^*$ denotes the adjoint (transposed) matrix of $B$, and $B^{-*}:=(B^*)^{-1}$, then $\hat{T}$ is also the optimal transport map from $\chi_{\hat{\Omega}_0}$ to $\hat{\lambda}\chi_{\hat{\Omega}_1}$.
This invariance is a consequence of the fact that the optimality of $T$ is characterized by being the gradient of a convex potential and pushing the initial density forward to the target density, cf.\ \cite[Theorem 2.12]{Villani}.

\begin{proposition}[One-step improvement]\label{prop:onestep}
	For any $\beta\in(0,1)$ there exist constants $\varepsilon,\theta\in(0,1]$ and $C_\beta\geq1$ depending only on $d,\alpha,\beta$ with the following property:
	Let $R>0$ and $T$ be an optimal transport map well prepared in $B_R$, and suppose that
	\begin{equation}\label{onestep11}
		E_R+D_R\leq\varepsilon.
	\end{equation}
	Then there are $B\in \R^{d\times d}$ and $b\in\mathbb{R}^d$ satisfying
	\begin{align}\label{eqn:onestep1}
		|B-Id|^2+\frac{|b|^2}{R^2}\lesssim E_R+D_R,
	\end{align}
	such that the optimal transport map $\hat{T}(\hat{x}):=B(T(B^*\hat{x})-b)$ from $\chi_{\hat{\Omega}_0}$ to $\hat{\lambda}\chi_{\hat{\Omega}_1}$, where
	$$\hat{\Omega}_0:=B^{-*}\Omega_0,\quad \hat{\Omega}_1:=B(\Omega_1-b),\quad \hat{\lambda}:=|\det B|^{-2}\lambda,$$
	is well prepared in $B_{\theta R}$ and satisfies that
	\begin{align}
		\hat{E}_{\theta R} &:= E(\hat{\Omega}_0,\hat{\Omega}_1,\hat{T},\theta R) \leq \theta^{2\beta}E_R + C_\beta D_R, \label{eqn:onestep2}\\
		\hat{D}_{\theta R} &:= D(\hat{\Omega}_0,\hat{\Omega}_1,\theta R) \leq \theta^{2\alpha}(1+C\sqrt{E_R+D_R})D_R, \label{eqn:onestep8}
	\end{align}
	where $C=C(d,\alpha)>0$.
\end{proposition}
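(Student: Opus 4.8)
\textbf{Proof proposal for Proposition \ref{prop:onestep}.} The plan is to deduce the one-step improvement from the harmonic approximation (Proposition \ref{prop:harmoniclagrangian}) by choosing the affine change of coordinates to cancel the linear part of the harmonic gradient $\nabla\phi$. By the scale invariance recorded after \eqref{wg12} we may assume $R=1$, so that $E_R=E$, $D_R=D$. First I would apply Proposition \ref{prop:harmoniclagrangian} with a parameter $\varepsilon_0=\varepsilon_0(d,\beta)$ to be fixed later (smaller than the threshold $\varepsilon$ we are proving exists): provided $E+D\le\eta(d,\varepsilon_0)$, there is a harmonic gradient $\nabla\phi$ on $\overline{B_r}$ with
\begin{equation*}
	\int_{B_r}|T-x-\nabla\phi|^2\chi_{\Omega_0}\le\varepsilon_0 E+\tfrac{C}{\varepsilon_0}D,\qquad\int_{B_r}|\nabla\phi|^2\le CE,
\end{equation*}
and with the symmetry \eqref{eqn:harmonicsymmetry} across $\{x\cdot\nu=0\}$. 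Since $\nabla\phi$ is a harmonic vector field, interior estimates on a ball $B_{2\rho}\subset B_r$ give pointwise control of $\nabla\phi$ and of $\nabla^2\phi$ on $B_\rho$ in terms of $\big(\int_{B_r}|\nabla\phi|^2\big)^{1/2}\lesssim E^{1/2}$; in particular the affine map $\hat x\mapsto\nabla\phi(0)+\nabla^2\phi(0)\hat x$ approximates $\nabla\phi$ on $B_\rho$ to second order, and the Campanato-type decay
\begin{equation*}
	\int_{B_{\theta\rho}}\big|\nabla\phi(x)-\nabla\phi(0)-\nabla^2\phi(0)x\big|^2\,dx\lesssim\theta^{d+4}\int_{B_r}|\nabla\phi|^2\lesssim\theta^{d+4}E
\end{equation*}
holds for $\theta\in(0,1/2)$. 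The symmetry \eqref{eqn:harmonicsymmetry} forces $\nabla\phi(0)\parallel\nu$ and $\nabla^2\phi(0)$ to commute with the reflection, which is what makes the tangency condition survive the change of coordinates.

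The second step is to define $B$ and $b$. Set $A:=Id+\nabla^2\phi(0)$ and $b:=$ (a vector encoding the translation $\nabla\phi(0)$), and then let $B$ be the symmetric positive definite matrix with $B^{-*}B^{-1}$, equivalently $B^2$ up to the adjoint bookkeeping in \eqref{eqn:onestep0}, chosen so that $B(T(B^*\hat x)-b)-\hat x$ has no linear-in-$\hat x$ part at the origin coming from $\nabla\phi$; because $\det\nabla^2 u=\lambda_0/\lambda_1$ is constant one has $\operatorname{tr}\nabla^2\phi(0)=0$ to leading order, so $\det A=1+O(E)$ and the freedom in $\hat\lambda=|\det B|^{-2}\lambda$ absorbs the remaining scalar mismatch — this is the point alluded to in Remark \ref{rem:nonunitvalue} where differing density values are genuinely needed, and $\hat\lambda$ stays in $[1/4,4]$ after shrinking $\varepsilon$. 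From $|\nabla\phi(0)|^2+|\nabla^2\phi(0)|^2\lesssim E$ we get $|B-Id|^2+|b|^2\lesssim E\le E+D$, which is \eqref{eqn:onestep1}. One then checks that the affinely transformed data satisfies the tangency and topological conditions in $B_{\theta\rho}$: tangency follows from the symmetry of $B$ and $b\parallel\nu$, and the topological condition \eqref{eqn:topological} for $\hat T$ in $B_{\theta\rho}$ follows from the original one together with the $L^\infty$-bound \eqref{eqn:Linfty1} of Proposition \ref{prop:Linfty}, since $|B-Id|,|b|$ are small and $\hat T$ moves points only by $O(E^{1/(d+2)}+D^{1/2})$.

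The third step is the energy estimate \eqref{eqn:onestep2}. Writing $\hat T(\hat x)-\hat x$ and unfolding the definitions, $\hat T(\hat x)-\hat x=B\big(T(B^*\hat x)-B^*\hat x\big)+(BB^*-Id)\hat x-Bb$, and by the choice of $B,b$ the affine part $(BB^*-Id)\hat x-Bb$ is precisely the best affine approximation to the pulled-back harmonic correction $\nabla\phi(B^*\hat x)$ at the origin. Changing variables $x=B^*\hat x$ (the Jacobian is $1+O(E)$, and $B^*B_{\theta\rho}\subset B_{\theta\rho(1+O(\sqrt E))}\subset B_{\theta\rho/2}$ say, after mild shrinking), the triangle inequality splits
\begin{equation*}
	\hat E_{\theta\rho}\lesssim\frac{1}{(\theta\rho)^{d+2}}\int_{B_{\theta\rho}}|T-x-\nabla\phi|^2\chi_{\Omega_0}+\frac{1}{(\theta\rho)^{d+2}}\int_{B_{\theta\rho}}\big|\nabla\phi-\text{(affine part)}\big|^2
\end{equation*}
up to the scalar normalization by $(\theta\rho)^2$ in the definition \eqref{wg12}; the first term is bounded by $(\theta\rho)^{-(d+2)}(\varepsilon_0 E+\tfrac{C}{\varepsilon_0}D)$ and the second by the Campanato decay above, $\lesssim\theta^2 E$. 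Choosing first $\theta$ small so that (a universal constant)$\cdot\theta^2\le\tfrac12\theta^{2\beta}$ — possible since $\beta<1$ — and then $\varepsilon_0=\varepsilon_0(d,\beta)$ small so that the rescaled first contribution from $\varepsilon_0 E$ is $\le\tfrac12\theta^{2\beta}E$, we arrive at $\hat E_{\theta\rho}\le\theta^{2\beta}E+C_\beta D$, which after renaming $\theta\rho$ as $\theta$ is \eqref{eqn:onestep2}. Finally \eqref{eqn:onestep8} is comparatively soft: the H\"older seminorm of the outer normal transforms under the linear map $B$ and the translation $b$ with a multiplicative factor $1+O(|B-Id|)=1+O(\sqrt{E+D})$ and picks up the scaling factor $\theta^{2\alpha}$ from $D$ being an $R^{2\alpha}$-weighted seminorm, giving $\hat D_{\theta R}\le\theta^{2\alpha}(1+C\sqrt{E+D})D$ directly. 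The main obstacle is the bookkeeping in the second step: producing a \emph{symmetric} $B$ that simultaneously kills the linear part of $\nabla\phi$, keeps $\hat\lambda\in[1/4,4]$ via the trace-free-to-leading-order identity, and preserves all the qualitative "well prepared" hypotheses for $\hat T$ — everything else is harmonic interior estimates plus the change of variables already licensed by Proposition \ref{prop:Linfty}.
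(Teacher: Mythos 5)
Your strategy matches the paper's in broad outline (apply Proposition~\ref{prop:harmoniclagrangian}, Taylor-expand the harmonic gradient at $0$, let $B\approx Id-\tfrac12\nabla^2\phi(0)$ and $b\approx\nabla\phi(0)$, change variables, then choose $\theta$ first and $\varepsilon_0$ after). But there is a real gap precisely at the point you flag as "bookkeeping": your claim that ``tangency follows from the symmetry of $B$ and $b\parallel\nu$'' is incorrect, and correcting it is the main technical content of the proof. First, the direction is wrong: the symmetry~\eqref{eqn:harmonicsymmetry} makes $\partial_\nu\phi$ odd across $\{x\cdot\nu=0\}$, hence $\nabla\phi(0)\cdot\nu=0$, so $\bar b:=\nabla\phi(0)$ is \emph{perpendicular} to $\nu$, not parallel. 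More seriously, with $\bar b\perp\nu$ the tangency condition \eqref{eqn:tangency} for $\hat\Omega_1=B(\Omega_1-\bar b)$ generically fails: one needs $\bar b\in\partial\Omega_1$, i.e.\ $g_1(\bar b')=0$, which does not hold, and one also needs the normals $B^{-*}\nu_1(\bar b)$ and $B\nu_0(0)$ of $\hat\Omega_1$, $\hat\Omega_0$ at the origin to be parallel, which again fails because $\nu_1(\bar b)\neq\nu_1(0)$.

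The paper repairs this by replacing $\bar b$ with $b=\bar b+\widetilde b$ (with $\widetilde b=g_1(\bar b')e_1$) and $\bar B$ with $B=\widetilde B\bar B$ (with $\widetilde B$ a symmetric perturbation aligning the two transformed normals), and the crucial point — which your argument does not supply — is that these corrections are \emph{super-linearly} small: $|\widetilde b|^2\lesssim(E+D)^{2+\alpha}$ and $|\widetilde B-Id|^2\lesssim(E+D)^{1+\alpha}$, by the $C^{1,\alpha}$ modulus of the boundary. Without this super-linearity, the extra error terms in the triangle-inequality decomposition of $\hat E_{\theta R}$ (coming from the difference $B\neq\bar B$, $b\neq\bar b$) are only $O(E+D)$ and cannot be absorbed into $\theta^{2\beta}E+C_\beta D$. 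This is also where Remark~\ref{rem:nonunitvalue} actually enters: if one insisted on $\hat\lambda=1$ (rather than ``the freedom in $\hat\lambda$ absorbs a scalar mismatch''), the compensating dilation would destroy exactly this super-linear bound. In short: the approach is right, but the step you dismissed as routine is the place where the proof lives, and as written the proposed $(B,b)$ do not make $\hat T$ well prepared.
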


Using the one-step improvement result iteratively, we obtain the following estimate of Campanato type, which is a reformulation of $C^{1,\alpha}$-regularity on integral level.

\begin{proposition}[Campanato-type estimate]\label{prop:iteration}
	Let $R>0$ and $T$ be an optimal transport map well prepared in $B_R$, and suppose that
	$$E_R+D_R\ll 1.$$
	Then for any $r\in(0,R)$, there are $\bar{A}_r\in\R^{d\times d}$ and $\bar{a}_r\in\R^d$ such that
	\begin{align}\label{eqn:iteration1}
		\frac{1}{r^{2+2\alpha}}\fint_{B_r}|T-(\bar{A}_rx+\bar{a}_r)|^2\chi_{\Omega_0}\lesssim R^{-2\alpha}(E_R+D_R),
	\end{align}
	and
	\begin{align}\label{eqn:iteration9}
		|\bar{A}_r-Id|^2+\frac{|\bar{a}_r|^2}{R^2}\lesssim E_R+D_R.
	\end{align}
\end{proposition}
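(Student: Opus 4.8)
The plan is to iterate the one-step improvement result (Proposition \ref{prop:onestep}) across dyadic scales $\theta^k R$ and assemble the resulting affine maps into a single convergent sequence. First I would fix $\beta\in(\alpha,1)$ and take $\varepsilon,\theta,C_\beta$ from Proposition \ref{prop:onestep} for this $\beta$; the smallness $E_R+D_R\ll1$ is chosen so that the hypothesis \eqref{onestep11} is met initially and, crucially, is \emph{preserved} along the iteration. Applying Proposition \ref{prop:onestep} at scale $R$ produces $B_1,b_1$ with $|B_1-Id|^2+|b_1|^2/R^2\lesssim E_R+D_R$, a tilted map $\hat T^{(1)}$ well prepared in $B_{\theta R}$, and the decay estimates \eqref{eqn:onestep2}, \eqref{eqn:onestep8}. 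One then checks that $\hat E_{\theta R}+\hat D_{\theta R}\leq\theta^{2\beta}E_R+C_\beta D_R+\theta^{2\alpha}(1+C\sqrt{\varepsilon})D_R\leq E_R+D_R\leq\varepsilon$ provided $\theta$ is small and $\varepsilon$ is small; this lets us reapply Proposition \ref{prop:onestep} to $\hat T^{(1)}$ at scale $\theta R$. Inductively we get matrices $B_k$, vectors $b_k$, tilted maps $T^{(k)}$ well prepared in $B_{\theta^k R}$, and quantities $E^{(k)}:=E(\Omega_0^{(k)},\Omega_1^{(k)},T^{(k)},\theta^k R)$, $D^{(k)}:=D(\Omega_0^{(k)},\Omega_1^{(k)},\theta^k R)$.

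Next I would extract the geometric decay. From \eqref{eqn:onestep8}, $D^{(k)}\leq\theta^{2\alpha}(1+C\sqrt\varepsilon)D^{(k-1)}$, so $D^{(k)}\lesssim(\theta^{2\alpha}(1+C\sqrt\varepsilon))^k D_R\leq\theta^{2\alpha' k}D_R$ for some $\alpha'$ with $\alpha<\alpha'<\beta$ (shrinking $\varepsilon$ so that $\theta^{2\alpha}(1+C\sqrt\varepsilon)\leq\theta^{2\alpha'}$). Feeding this into \eqref{eqn:onestep2} gives the linear recursion $E^{(k)}\leq\theta^{2\beta}E^{(k-1)}+C_\beta\theta^{2\alpha'(k-1)}D_R$; the standard discrete Gronwall/geometric-sum argument (using $\beta>\alpha'$) yields $E^{(k)}\lesssim_\beta\theta^{2\alpha' k}(E_R+D_R)$, hence also $E^{(k)}+D^{(k)}\lesssim\theta^{2\alpha' k}(E_R+D_R)$. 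For the affine maps, I would track the cumulative change of coordinates: the $k$-th tilted map corresponds, back in the original variables, to an affine map $x\mapsto A_k x+a_k$ obtained by composing the $B_j$'s and $b_j$'s; since $|B_k-Id|^2+|b_k|^2/(\theta^{k-1}R)^2\lesssim E^{(k-1)}+D^{(k-1)}\lesssim\theta^{2\alpha'(k-1)}(E_R+D_R)$, the increments $|A_k-A_{k-1}|$ and $|a_k-a_{k-1}|/R$ (after undoing the rescaling, which contributes a factor $\theta^{k-1}$) are summable, so $A_k\to\bar A$, $a_k\to\bar a$ with $|\bar A-Id|^2+|\bar a|^2/R^2\lesssim E_R+D_R$; this is \eqref{eqn:iteration9}, and incidentally each $|A_k-\bar A|^2+|a_k-\bar a|^2/R^2\lesssim\theta^{2\alpha' k}(E_R+D_R)$.

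Finally I would translate the dyadic decay of $E^{(k)}$ back to the original map to get \eqref{eqn:iteration1} for all $r\in(0,R)$. Unwinding the definition \eqref{eqn:onestep0} of the tilted map, $E^{(k)}=E(\Omega_0^{(k)},\ldots,\theta^kR)$ controls $\frac{1}{(\theta^kR)^{2+d}}\int_{B_{\theta^kR}}|T-(A_k x+a_k)|^2\chi_{\Omega_0}$ up to a universal constant (the Jacobian factors $|\det B_j|$ are all within $O(\varepsilon)$ of $1$, and the balls $B_{\theta^kR}$ in the tilted variables contain/are contained in comparable balls in the original variables once $\varepsilon$ is small — this last point requires a short lemma, but is the kind of estimate already implicit in Proposition \ref{prop:Linfty} and the affine-invariance discussion). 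So setting $\bar A_r:=A_k$, $\bar a_r:=a_k$ for $\theta^{k+1}R<r\leq\theta^kR$, we get $\frac{1}{r^{2+2\alpha}}\fint_{B_r}|T-(\bar A_rx+\bar a_r)|^2\chi_{\Omega_0}\lesssim\frac{(\theta^kR)^{d+2}}{r^{d}\,r^{2\alpha}}E^{(k)}\lesssim\theta^{-(d+2)}\frac{(\theta^kR)^{2\alpha'-2\alpha}}{R^{2\alpha'-2\alpha}}\cdot R^{-2\alpha}(E_R+D_R)\lesssim R^{-2\alpha}(E_R+D_R)$ since $\alpha'>\alpha$; the loss of $\theta^{-(d+2)}$ and the $\beta$-dependence are absorbed into the universal constant once $\theta=\theta(d,\alpha,\beta)$ is fixed. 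The main obstacle I anticipate is not the iteration itself — which is routine Campanato bookkeeping — but the careful composition of the affine changes of variables: one must verify that the hypotheses of Proposition \ref{prop:onestep} (well-preparedness in the rescaled ball, including the topological and tangency conditions, and $\lambda\in[1/4,4]$) genuinely persist under repeated tilting, that the accumulated distortion $|A_k-Id|$ stays $o(1)$ so the domains don't degenerate, and that passing between tilted and original balls costs only universal constants. Making this bookkeeping precise, while keeping all constants universal and independent of $k$, is where the real care lies.
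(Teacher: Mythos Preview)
Your approach is the paper's: iterate Proposition \ref{prop:onestep} at scales $\theta^k R$, extract geometric decay of $E^{(k)}+D^{(k)}$, and convert back via the accumulated affine map $A_k=B_k\cdots B_1$. One step, however, does not go through as written. You claim that after one application,
\[
\hat E_{\theta R}+\hat D_{\theta R}\le \theta^{2\beta}E_R+C_\beta D_R+\theta^{2\alpha}(1+C\sqrt{\varepsilon})D_R\le E_R+D_R,
\]
but since $C_\beta\ge 1$ (see the statement of Proposition \ref{prop:onestep}) this inequality fails whenever $D_R>0$: the sum $E+D$ need not decrease at the first step. Your two-stage plan (``first show smallness is preserved for all $k$, then derive decay'') is therefore circular, because the bound $D^{(k)}\le\theta^{2\alpha}(1+C\sqrt{\varepsilon})D^{(k-1)}$ already presupposes that $E^{(k-1)}+D^{(k-1)}\le\varepsilon$, which you have not yet established.

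The paper closes this loop by proving decay and smallness \emph{simultaneously} in a single induction: one posits $E_k+D_k\le C'\theta^{2\alpha k}(E_R+D_R)$ for an explicit constant $C'=C'(d,\alpha,\beta)$ chosen large enough to absorb $C_\beta$ (a geometric sum handles the $E$-recursion, and the convergent product $\prod_{k\ge1}(1+\theta^{\alpha k})$ handles the $D$-recursion), then checks that the hypothesis for $k\le K-1$ yields it for $k=K$; smallness then follows from $C'(E_R+D_R)\ll 1$. Once the induction is set up this way, the remainder of your outline---summability of $|B_k-Id|$ and $|b_k|/(\theta^k R)$, the containment $B_{\frac12\theta^k R}\subset A_k^*B_{\theta^k R}$, and the final conversion to \eqref{eqn:iteration1} and \eqref{eqn:iteration9}---matches the paper line for line.
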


We finally deduce from Proposition \ref{prop:iteration}, the Campanato theory, and standard boundary arguments (as e.g.\ in Chen-Figalli \cite{ChenFigalli}) that the above Campanato-type estimate implies the desired pointwise $C^{1,\alpha}$-regularity in form of Theorem \ref{thm:epsilonregularity}.
All the statements from Proposition \ref{prop:onestep} are proved in Section \ref{sect:lagrangian}.

\section{Boundary $L^2$-$L^\infty$ estimates}\label{sect:Linftytheory}

In this section we prove Proposition \ref{prop:Linfty}, developing boundary $L^2$-$L^\infty$ theory in a slightly more general Lipschitz setting, still under the topological condition.
Let $\Omega_0$ and $\Omega_1$ be open sets, and $T$ be an optimal transport map between constant densities $\chi_{\Omega_0}$ and $\lambda\chi_{\Omega_1}$ with $\lambda\in[1/4,4]$.
Suppose that for $i=0,1$, $\Omega_i\subset\mathbb{R}^d$ is locally a Lipschitz half-space:
\begin{align}\label{eqn:Lip}
\Omega_i\cap B_1=\{(s,x')\in B_1 \mid s>g_i(x')\}\quad\mbox{with}\;|\nabla' g_i|\le 1/4,
\end{align}
where $\nabla'$ denotes the $(d-1)$ dimensional gradient.
In this setting we also denote by $E$ the $L^2$-energy in $B_1$, and introduce the width $\delta$ of the boundaries as
\begin{align*}
E :=\fint_{B_1}|T-x|^2\chi_{\Omega_0}dx, \quad \delta :=\|g_0\|_\infty+\|g_1\|_\infty.
\end{align*}
Note that if $\partial\Omega_i\in C^{1,\alpha}$ ($i=0,1$) and the tangency condition (\ref{eqn:tangency}) holds, then
\begin{align}\label{ao019}
	\delta^2\lesssim D
\end{align}
where $D$ is defined in Section \ref{sect:preliminaries}.
Hence, for the proof of Proposition \ref{prop:Linfty}, we may assume that $E^\frac{1}{d+2}+\delta\ll1$ instead of $E+D\ll1$.


In Section \ref{subsec:Linftygeneral} we first develop general $L^2$-$L^\infty$ theory without the topological condition.
More precisely, we establish $L^2$-$L^\infty$ estimates outside boundary layers of scale $E^{1\over d+2}$ (or $E^{1\over d+2}+\delta$), while we show that even inside the boundary layers, certain $L^2$-$L^\infty$ estimates hold except for the ``outward'' direction $-e_1$; this exception corresponds what we call a ``separation of boundary layer'' (see Remark \ref{rem:exseparation}).

In Section \ref{subsec:Linfty3} we observe that the topological condition (\ref{eqn:topological}) is in fact a simple sufficient condition for preventing separation of boundary layer, and complete the proof of Proposition \ref{prop:Linfty}.
We also show that the topological condition is always valid under a global assumption on the geometry of $\Omega_0$ and $\Omega_1$, which is imposed in the previous study by Chen and Figalli \cite{ChenFigalli}.

Throughout this section we argue in the framework of Kantorovich (see e.g.\ \cite[Theorem 2.12]{Villani}), since it is convenient for pointwise arguments.
Let $\pi$ be an optimal transference plan between $\chi_{\Omega_0}$ and $\lambda\chi_{\Omega_1}$; recall that $\pi$ is related to $T$ by $\pi=(Id\times T)\sharp\chi_{\Omega_0}$ (cf.\ \cite[Theorem 2.12]{Villani}).
The $L^2$-energy $E$ is then expressed as
\begin{align}\label{eqn:pienergy}
E = \frac{1}{|B_1|}\int_{B_1\times\mathbb{R}^d}|y_1-y_0|^2\pi(dy_0dy_1).
\end{align}
Recall the general fact that ${\rm supp}\pi\subset\overline{\Omega_0}\times\overline{\Omega_1}$, which follows from the marginal condition.
We note that for any $x_0\in\overline{\Omega_0}$ there exists $x_1\in\overline{\Omega_1}$ such that $(x_0,x_1)\in{\rm supp}\pi$
(this follows from the boundedness of $\Omega_1$).

\subsection{General theory of boundary $L^2$-$L^\infty$ estimates}\label{subsec:Linftygeneral}

We first prove forward $L^\infty$-bounds.

\begin{lemma}\label{lem:Linftyimage}
	Suppose \eqref{eqn:Lip} holds.
	If $E\ll1$, then the following estimates hold.
	\begin{itemize}
		\item[(1)] (Interior estimate.) For any $(x_0,x_1)\in{\rm supp}\pi$ such that $x_0\in B_{3/4}$ and $\mathrm{dist}(x_0,\partial\Omega_0)\geq E^{1\over d+2}$,
		\begin{align}\label{eqn:interiorbound}
			 |x_1-x_0| \lesssim E^{1\over d+2}.
		\end{align}
		\item[(2)] (Boundary estimate.) For any $(x_0,x_1)\in{\rm supp}\pi$ such that $x_0\in B_{3/4}$ and any $e\in \partial B_1$ such that $e\cdot e_1\geq1/2$,
		\begin{align}\label{eqn:globalbound}
			(x_1-x_0)\cdot e \lesssim E^{1\over d+2}.
		\end{align}
	\end{itemize}
\end{lemma}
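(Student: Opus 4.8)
\textbf{Proof plan for Lemma \ref{lem:Linftyimage}.}

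The plan is to exploit the classical geometric consequence of $L^2$-optimality, namely the monotonicity (or cyclical monotonicity) of $\mathrm{supp}\,\pi$: for any two points $(x_0,x_1),(z_0,z_1)\in\mathrm{supp}\,\pi$ one has $(x_1-z_1)\cdot(x_0-z_0)\ge0$. Combined with the mass balance coming from the marginal conditions, this allows a standard ``covering cone'' argument. Fix $(x_0,x_1)\in\mathrm{supp}\,\pi$ and suppose, for contradiction, that $|x_1-x_0|$ (in part (1)) or $(x_1-x_0)\cdot e$ (in part (2)) is some large multiple $M\gg1$ of $E^{1/(d+2)}$. Writing $e:=(x_1-x_0)/|x_1-x_0|$ in part (1), monotonicity forces every pair $(z_0,z_1)\in\mathrm{supp}\,\pi$ with $z_0$ in the half-space $\{(z_0-x_0)\cdot e\le 0\}$ (equivalently, in the slab just ``behind'' $x_0$ along $e$) to be displaced a definite amount in the direction $e$: indeed $z_1\cdot e\ge x_1\cdot e + (z_0-x_0)\cdot e \gtrsim x_0\cdot e + M E^{1/(d+2)}$ when $z_0$ is within distance comparable to $M E^{1/(d+2)}$ behind $x_0$. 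So all of the $\chi_{\Omega_0}$-mass sitting in a region of size $\sim M E^{1/(d+2)}$ near $x_0$ gets pushed a distance $\gtrsim M E^{1/(d+2)}$, which already contributes $\gtrsim (M E^{1/(d+2)})^d \cdot (M E^{1/(d+2)})^2 = M^{d+2} E$ to the transport cost $|B_1|E = \int |y_1-y_0|^2\,\pi$. Choosing $M$ a large enough universal constant gives a contradiction with \eqref{eqn:pienergy}, proving both estimates up to bookkeeping the relevant constants.

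The two cases differ only in how one guarantees that a full $d$-dimensional chunk of source mass of the required size actually lies inside $B_1\cap\Omega_0$ near $x_0$, so that its cost is genuinely counted in $E$. In the interior case (1) this is where the hypothesis $\mathrm{dist}(x_0,\partial\Omega_0)\ge E^{1/(d+2)}$ and $x_0\in B_{3/4}$ enter: a ball of radius $\sim M E^{1/(d+2)}$ around $x_0$ is (for $E\ll1$, hence $M E^{1/(d+2)}$ small) contained in $\Omega_0\cap B_1$, so it carries mass $\sim (M E^{1/(d+2)})^d$. In the boundary case (2) one does not have a full ball inside $\Omega_0$, but here one only claims a bound on the single component $(x_1-x_0)\cdot e_1$-like direction (any $e$ with $e\cdot e_1\ge1/2$); the point is that the relevant cylinder ``behind'' $x_0$ in direction $e$ still intersects $\Omega_0\cap B_1$ in a set of measure $\gtrsim (M E^{1/(d+2)})^d$, because $e$ points essentially \emph{into} $\Omega_0$ (the Lipschitz graph condition $|\nabla' g_i|\le1/4$ ensures that the inward-normal-ish half-space is almost all interior). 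One translates $x_0$ slightly into $\Omega_0$ if necessary — staying within $B_1$ since $x_0\in B_{3/4}$ — to produce a genuine interior region of the right volume displaced by $\gtrsim M E^{1/(d+2)}$ in the direction $e$. The asymmetry of the statement (a full vector bound in the interior versus only ``forward'' directional bounds near the boundary) is exactly the phenomenon flagged in the text: near $\partial\Omega_0$ one cannot exclude that mass in a boundary layer is displaced a long way in the outward direction $-e_1$, since such mass may simply not be there to be counted.

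The main obstacle I expect is the careful geometric bookkeeping in case (2): one must pick the auxiliary point and the region of source mass so that (i) it lies in $\Omega_0\cap B_1$, using $x_0\in B_{3/4}$ and the smallness $M E^{1/(d+2)}\ll1$; (ii) every point of it is, by monotonicity against $(x_0,x_1)$, displaced by at least a fixed fraction of $M E^{1/(d+2)}$ in direction $e$; and (iii) its measure is $\gtrsim (M E^{1/(d+2)})^d$ with a constant that, when multiplied by $(M E^{1/(d+2)})^2$, beats $|B_1|E$. Getting the constants to line up — in particular making sure the ``behind $x_0$ in direction $e$'' region and the ``inside $\Omega_0$'' region overlap in a set of the claimed size despite the Lipschitz tilt of $\partial\Omega_0$ — is the delicate part; everything else is the routine monotonicity-plus-volume estimate. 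Once the constant $M$ is fixed universally, \eqref{eqn:interiorbound} and \eqref{eqn:globalbound} follow immediately by contraposition.
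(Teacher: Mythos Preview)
Your central monotonicity claim is incorrect, and this is a genuine gap rather than a bookkeeping issue. Monotonicity of $\mathrm{supp}\,\pi$ gives the \emph{vector} inequality $(z_1-x_1)\cdot(z_0-x_0)\ge 0$, which does not project to a single direction: it does \emph{not} imply $(z_1-x_1)\cdot e\ge (z_0-x_0)\cdot e$, nor any lower bound on $(z_1-z_0)\cdot e$ for $z_0$ ``behind'' $x_0$. A one-line counterexample: take $(x_0,x_1)=((0,0),(L,0))$ with $L$ large and $(z_0,z_1)=((-1,0),(-1,0))$; then $(z_1-x_1)\cdot(z_0-x_0)=1+L>0$ so monotonicity holds, $z_0$ is behind $x_0$ along $e=e_1$, yet $z_1-z_0=0$. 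So the mass behind $x_0$ is under no obligation to move at all, and your cost lower bound $\gtrsim M^{d+2}E$ never materializes. (Switching to ``ahead of $x_0$'' does not help either: for $z_0-x_0$ not exactly parallel to $e$, monotonicity still gives no control on $(z_1-x_1)\cdot e$.)

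The paper's argument circumvents this by \emph{integrating} the rearranged monotonicity inequality
\[
(x_1-x_0)\cdot(y_0-x_0)\le \tfrac12|y_1-y_0|^2+\tfrac32|y_0-x_0|^2
\]
with respect to $\pi(dy_0dy_1)$ over $B_{\theta r}(x_0+re)\times\mathbb{R}^d$, where the ball $B_{\theta r}(x_0+re)$ is chosen (via the Lipschitz cone condition) to lie inside $\Omega_0\cap B_1$. The point is that the \emph{centroid} of this ball is $x_0+re$, so the left-hand side becomes $|B_\theta|r^{d+1}\,(x_1-x_0)\cdot e$ exactly, converting the vector inequality into the desired scalar one. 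The right-hand side is bounded by $E+r^{d+2}$, and choosing $r=E^{1/(d+2)}$ gives \eqref{eqn:globalbound}. For \eqref{eqn:interiorbound} the same integration works with the ball centered at $x_0+re$ for \emph{any} $e\in\partial B_1$, using $\mathrm{dist}(x_0,\partial\Omega_0)\ge E^{1/(d+2)}$ for the inclusion. Your geometric intuition about where the source mass sits (the cone/Lipschitz inclusion) is the right one and matches the paper; what is missing is this averaging trick to extract a directional bound from the full $\mathbb{R}^d$-monotonicity.
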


\begin{proof}
	We start with a bit of elementary geometry:
	By the Lipschitz condition (\ref{eqn:Lip}), the open cone $C:=\{x\in\mathbb{R}^d\mid x\cdot e_1>|x|/3 \}$ satisfies that $(x_0+C)\cap B_1\subset\Omega_0$ for any $x_0\in\overline{\Omega_0}\cap B_{3/4}$.
	Clearly, for the strictly smaller closed cone $\tilde{C}:=\{x\in C\mid x\cdot e_1\geq|x|/2 \}$,
	there is a small number $0<\theta\le 1$, only depending on $d$,
	such that for any $e\in \widetilde C\cap\partial B_1$ we have $B_\theta(e)\subset C$.
	Hence for any point $x_0\in\mathbb{R}^d$, any direction $e\in\partial B$,
	and any radius $r>0$ we have
	\begin{align}\label{eqn:conecondition}
	\big(x_0\in\overline{\Omega_0}\cap B_{3/4},\ e\cdot e_1\geq1/2,\ r\le 1/8\big) \quad \Longrightarrow \quad B_{\theta r}(x_0+re)\subset\Omega_0\cap B_1.
	\end{align}

	Now, aiming at showing (\ref{eqn:globalbound}), we give ourselves a pair of points
	$(x_0,x_1)\in{\rm supp}\pi\cap(B_{3/4}\times\mathbb{R}^d)$; recall that $x_0\in\overline{\Omega_0}\cap B_{3/4}$.
	Then we obtain from the monotonicity of $\pi$ in form of $(y_1-x_1)\cdot(y_0-x_0)\ge 0$
	for all pair of points $(y_0,y_1)\in{\rm supp}\pi$ that
	\begin{equation}\label{eqn:001}
		\begin{split}
			(x_1-x_0)\cdot(y_0-x_0) & \le (y_1-y_0)\cdot(y_0-x_0) + |y_0-x_0|^2\\
			& \le \frac{1}{2}|y_1-y_0|^2+\frac{3}{2}|y_0-x_0|^2.
		\end{split}
  \end{equation}
	Given $r$ and $e$ as in (\ref{eqn:conecondition}),
	we integrate (\ref{eqn:001}) against $\pi$ over
	$(y_0,y_1)\in B_{\theta r}(x_0+re)\times\mathbb{R}^d$,
	obtaining
	by the marginal condition on $\pi$ and by (\ref{eqn:pienergy})
	\begin{align*}
	\lefteqn{(x_1-x_0)\cdot\int_{B_{\theta r}(x_0+re)}(y_0-x_0)\chi_{\Omega_0}dy_0}\nonumber\\
	&\leq\frac{1}{2}|B_1|E+\frac{3}{2}\int_{B_{\theta r}(x_0+re)}|y_0-x_0|^2\chi_{\Omega_0}dy_0,
	\end{align*}
	which by (\ref{eqn:conecondition}), by the elementary relations
	\begin{equation}\label{io11}
			\int_{B_{\theta r}(x_0+re)}|y_0-x_0|^2dy_0 \le |B_{\theta}|r^{d+2}, \quad \int_{B_{\theta r}(x_0+re)}(y_0-x_0)dy_0 = |B_{\theta}|r^{d+1}e,
	\end{equation}
	and by $\theta\sim1$ turns into
	\begin{align*}
	r^{d+1}(x_1-x_0)\cdot e
	\lesssim E+r^{d+2}.
  \end{align*}
	Taking $r=E^{1\over d+2}\ll1$, we obtain for all directions $e$ with $e\cdot e_1\geq1/2$,
	\begin{align}\label{eqn:002}
		(x_1-x_0)\cdot e \lesssim E^{1\over d+2}.
	\end{align}

	We finally prove (\ref{eqn:interiorbound}).
	If we additionally assume that $\mathrm{dist}(x_0,\partial\Omega_0)\geq E^{1\over d+2}$, then the inclusion in (\ref{eqn:conecondition}) holds even for any $e\in\partial B_1$ and $r={1\over 2}E^{1\over d+2}$.
	Hence, arguing as the above proof, we find that the resulting estimate (\ref{eqn:002}) holds for all directions $e$, thus obtaining (\ref{eqn:interiorbound}).
\end{proof}

We next prove similar estimates of backward type.

\begin{lemma}\label{lem:Linftypreimage}
	Suppose \eqref{eqn:Lip} holds.
	If $E^\frac{1}{d+2}+\delta\ll1$, then the following estimates hold.
	\begin{itemize}
		\item[(1)] (Interior estimate.) For any $(x_0,x_1)\in{\rm supp}\pi$ such that $x_1\in B_{1/2}$ and $x_1\cdot e_1 \gg E^{1\over d+2}+\delta$,
		\begin{align}\label{eqn:interiorbound2}
			 |x_0-x_1| \lesssim E^{1\over d+2},
		\end{align}
		where $c_0\geq1$ is the universal constant in Lemma \ref{lem:Linftyimage}.
		\item[(2)] (Boundary estimate.) For any $(x_0,x_1)\in{\rm supp}\pi$ such that $x_1\in B_{1/2}$, and any $e\in\partial B_1$ such that $e\cdot e_1\geq1/2$,
		\begin{align}\label{eqn:globalbound2}
			(x_0-x_1)\cdot e \lesssim E^{1\over d+2}+\delta.
		\end{align}
	\end{itemize}
\end{lemma}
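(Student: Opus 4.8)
The plan is to mirror the proof of Lemma \ref{lem:Linftyimage}, but now exploiting monotonicity of $\pi$ (equivalently of the inverse plan) in the backward direction, using the cone geometry on the \emph{target} side. Specifically, by \eqref{eqn:Lip} for $\Omega_1$, the open cone $C=\{x\cdot e_1>|x|/3\}$ satisfies $(x_1+C)\cap B_1\subset\Omega_1$ for every $x_1\in\overline{\Omega_1}\cap B_{3/4}$, exactly as in the forward case. I would fix a pair $(x_0,x_1)\in\mathrm{supp}\,\pi$ with $x_1\in B_{1/2}$ and, for $e$ with $e\cdot e_1\ge 1/2$ and a radius $r\le 1/8$, integrate the monotonicity inequality
\begin{align*}
	(x_0-x_1)\cdot(y_1-x_1)\le \frac12|y_1-y_0|^2+\frac32|y_1-x_1|^2
\end{align*}
against $\pi$ over $(y_0,y_1)\in\mathbb{R}^d\times B_{\theta r}(x_1+re)$. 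The second marginal of $\pi$ on $B_1$ is $\lambda\chi_{\Omega_1}$ with $\lambda\sim 1$, so after using $B_{\theta r}(x_1+re)\subset\Omega_1\cap B_1$, the elementary identities \eqref{io11} with $x_0$ replaced by $x_1$, and $\theta\sim 1$, I get $r^{d+1}(x_0-x_1)\cdot e\lesssim E+r^{d+2}$; choosing $r=E^{1/(d+2)}\ll1$ yields $(x_0-x_1)\cdot e\lesssim E^{1/(d+2)}$ for all such $e$. This is even slightly stronger than \eqref{eqn:globalbound2}; the extra $+\delta$ is harmless and will be needed for the interior part.

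The subtlety compared to the forward case is that for the backward boundary estimate one must be able to fit a ball $B_{\theta r}(x_1+re)\subset\Omega_1\cap B_1$, and this requires $x_1$ itself to be close enough to the true boundary region — here is where $\delta=\|g_0\|_\infty+\|g_1\|_\infty$ enters. If $x_1\in\overline{\Omega_1}\cap B_{1/2}$ the cone inclusion holds for $r$ up to a universal size, so the argument above goes through verbatim; but to state (2) cleanly for all $(x_0,x_1)\in\mathrm{supp}\,\pi$ with $x_1\in B_{1/2}$, I would note that $\mathrm{supp}\,\pi\subset\overline{\Omega_0}\times\overline{\Omega_1}$, so automatically $x_1\in\overline{\Omega_1}$, and the geometry of \eqref{eqn:conecondition} (applied to $\Omega_1$) gives the needed inclusion once $\delta$ is small. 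Thus part (2) follows without even using the already-established forward bound.

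For part (1), the interior backward estimate, I would proceed as in the forward interior case: if $x_1\cdot e_1\gg E^{1/(d+2)}+\delta$, then $\mathrm{dist}(x_1,\partial\Omega_1)\gtrsim x_1\cdot e_1-\delta\gg E^{1/(d+2)}$ because $\partial\Omega_1\cap B_1$ is trapped in the slab $|s|\le\delta$ (more precisely lies on the Lipschitz graph of $g_1$), so a ball $B_{\frac12 E^{1/(d+2)}}(x_1+re)$ with $r=\frac12 E^{1/(d+2)}$ lies inside $\Omega_1\cap B_1$ for \emph{every} direction $e\in\partial B_1$, not just those in the cone. Running the same integrated-monotonicity computation over all $e$ then controls $(x_0-x_1)\cdot e$ from above for all $e$, hence bounds $|x_0-x_1|\lesssim E^{1/(d+2)}$. (The stray reference to $c_0$ in the statement is a leftover; one can just invoke Lemma \ref{lem:Linftyimage} for the hypothesis $x_1\in B_{1/2}\subset B_{3/4}$ to ensure $x_0\in B_{3/4}$ and stay inside the region where \eqref{eqn:conecondition} was set up.) The one genuinely delicate point is bookkeeping the interplay between $x_1\cdot e_1$, $\delta$, and $\mathrm{dist}(x_1,\partial\Omega_1)$ so that the ball of radius $\sim E^{1/(d+2)}$ around $x_1+re$ stays clear of $\partial\Omega_1$ for all directions $e$; everything else is a transcription of Lemma \ref{lem:Linftyimage} with the roles of source and target swapped and the density normalization $\lambda\sim1$ tracked.
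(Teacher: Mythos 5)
The proposal has a genuine gap, and it is exactly the asymmetry that the paper explicitly flags just before its proof of this lemma. When you integrate your monotonicity inequality against $\pi$ over $(y_0,y_1)\in\mathbb{R}^d\times B_{\theta r}(x_1+re)$, the resulting quadratic term
\[
\int_{\mathbb{R}^d\times B_{\theta r}(x_1+re)}|y_1-y_0|^2\,\pi(dy_0\,dy_1)
\]
is \emph{not} controlled by $E$. The definition of $E$ in \eqref{eqn:pienergy} (and in Section~\ref{sect:Linftytheory}) localizes the energy on the \emph{source} side, $B_1\times\mathbb{R}^d$, not on the target side $\mathbb{R}^d\times B_1$. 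There is no a priori smallness for the energy of trajectories \emph{arriving} in $B_1$: a priori, distant mass from $\Omega_0$ could be transported into $\Omega_1\cap B_1$, and that is precisely the kind of behavior that the backward $L^\infty$ estimate is supposed to rule out. Using such a bound would therefore be circular. This is why the naive ``swap the indices $0$ and $1$'' argument fails, as the paper warns.

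The paper's actual proof places the probe ball on the \emph{source} side: it shows $B_{r/8}(x_1+re)\subset\Omega_0\cap B_1$ (which is why $\delta$ appears — $x_1\cdot e_1\ge-\delta$ plus the cone condition forces $r\gtrsim\delta$), so that the energy term becomes $\int_{B_{r/8}(x_1+re)\times\mathbb{R}^d}|y_1-y_0|^2\pi\le|B_1|E$. The cost of doing this is that the moment $\int (y_1-x_1)\pi$ is no longer determined by a marginal; one has to introduce a derived direction $\tilde e$ (a normalized first moment of $y_1-x_1$), show via a second Young-type inequality that $\tilde e$ is $\varepsilon$-close to the chosen $e$, and then close the argument with an $\varepsilon$-net over the spherical cap (taking convex combinations to pass from the $\varepsilon$-net directions back to arbitrary $e$). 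Your proposal omits all of this, and the specific step ``$r^{d+1}(x_0-x_1)\cdot e\lesssim E+r^{d+2}$'' is unjustified as written. The same issue carries over to your treatment of part (1), where you again place the ball inside $\Omega_1$.

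One smaller point: your monotonicity inequality with $\frac32|y_1-x_1|^2$ on the right is algebraically valid, but it is not the inequality \eqref{io07} the paper uses (which has $|y_0-x_1|^2$); the paper's choice is deliberate, precisely so that, after the probe ball is placed inside $\Omega_0$, the $|y_0-x_1|^2$ term is computable via the first marginal. Fixing your argument essentially forces you to rediscover the paper's structure.
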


At first glance, one would expect the estimates of the backward type to be identical to those
of the forward type, just with the indices $0$ and $1$ exchanged.
However, our assumption $E\ll1$, cf. \eqref{eqn:pienergy}, breaks that symmetry, so that the argument for the estimates of the backward type are slightly more subtle.

Below we first prove the main boundary estimate \eqref{eqn:globalbound2} by taking several steps, and in the last paragraph turn to the interior estimate \eqref{eqn:interiorbound2}.

\begin{proof}[Proof of Lemma \ref{lem:Linftypreimage}]
	Fix an arbitrary $(x_0,x_1)\in{\rm supp}\pi\cap(\mathbb{R}^d\times B_{1/2})$
	and an $e\in\partial B_1$ satisfying the weaker condition
	\begin{align}\label{io04}
	e\cdot e_1\ge\frac{1}{4}.
	\end{align}
	In line with the proof of Lemma \ref{lem:Linftyimage},
	we first argue that provided a radius $r$ satisfies
	\begin{align}\label{io01}
	\delta\ll r\ll 1,
	\end{align}
	we have the inclusion property
	\begin{align}\label{io05}
	B_{r/8}(x_1+re)\subset\Omega_0\cap B_1.
	\end{align}
	Notice that $x_1\in\overline{\Omega_1}\cap B_{1/2}$ by the marginal condition,
	so that by (\ref{eqn:Lip}) we have $x_1\cdot e_1\ge-\delta$. By (\ref{io04})
	this implies $(x_1+re)\cdot e_1\ge-\delta+\frac{r}{4}$, that is,
	$(x_1+re)\cdot e_1\ge\delta+\frac{r}{8}$ for $r\ge 16\delta$. This in turn yields
	$B_{r/8}(x_1+re)\subset\{x \mid x\cdot e_1>\delta\}$ for $r\ge 16\delta$.
	On the other hand, we have $B_{r/8}(x_1+re)\subset B_1$ for $r\le\frac{4}{9}$.
	Appealing once more to (\ref{eqn:Lip}) we obtain (\ref{io05})
	provided (\ref{io01}) holds in the specific form of $16\delta\le r\le\frac{4}{9}$.

	We now come to the central part and argue that for given $\varepsilon>0$,
	provided in addition to (\ref{io01}) we have
	\begin{align}\label{io09}
	E^\frac{1}{d+2}\ll_\varepsilon r,
	\end{align}
	there exists $\tilde e\in\partial B_1$ such that
	\begin{align}\label{io02}
	|\tilde e-e|<\varepsilon\quad\mbox{and}\quad (x_0-x_1)\cdot\tilde e\lesssim_\varepsilon r.
	\end{align}
	As in the proof of Lemma \ref{lem:Linftyimage}, we note that by monotonicity we have
	for any $(y_0,y_1)\in{\rm supp}\pi$:
	\begin{align}\label{io07}
	(x_0-x_1)\cdot(y_1-x_1)\le\frac{1}{2}|y_1-y_0|^2+\frac{3}{2}|y_0-x_1|^2.
	\end{align}
	To control the second l.h.s.~factor we record the elementary inequality
	\begin{align}\label{io08}
	|(y_1-x_1)-(y_0-x_1)|\le\frac{1}{2\tilde r}|y_1-y_0|^2+\frac{\tilde r}{2}
	\end{align}
	for some radius $\tilde r$ to be optimized.
	We integrate both inequalities (\ref{io07}) and (\ref{io08})
	with respect to $\pi(dy_0dy_1)$ restricted to
	$B_{r/8}(x_1+re)\times\mathbb{R}^d$.
	Then the integral of $|y_1-y_0|^2$ is bounded by $E$ by the second part of the inclusion (\ref{io05}) (``$\subset B_1$'') and by (\ref{eqn:pienergy}).
	On the terms involving $y_0-x_1$, we use the marginal condition
	followed by the first part of the inclusion (\ref{io05}) (``$\subset\Omega_0$''), and again appeal to (\ref{io11}) by replacing $\theta$ with $1/8$ and $x_0$ with $x_1$.
	Writing
	\begin{align*}
	\tilde e:=\frac{1}{|B_{1/8}|r^{d+1}}\int_{B_{r/8}(x_1+re)\times\mathbb{R}^d}(y_1-x_1)\pi(dy_0dy_1),
	\end{align*}
	we so obtain from (\ref{io07}) and (\ref{io08})
	\begin{align*}
	|B_{1/8}|r^{d+1}(x_0-x_1)\cdot\tilde e
	\le\frac{1}{2}E+\frac{3}{2}|B_{1/8}|r^{d+2},\\
	\big||B_{1/8}|r^{d+1}\tilde e-|B_{1/8}|r^{d+1}e\big|\le\frac{1}{2\tilde r}E
	+\frac{1}{2}|B_{1/8}|r^{d}\tilde r.
	\end{align*}
	Appealing to the assumption (\ref{io09}),
	this turns into (\ref{io02}) by choosing
	$\tilde r$ such that $E^\frac{1}{d+2}\ll_\varepsilon \tilde r\ll_\varepsilon r$.

	We now may conclude \eqref{eqn:globalbound2} by an elementary consideration:
	As usual, we call $\{\tilde e_1,\cdots,\tilde e_N\}\subset\mathbb{R}^d$
	an $\varepsilon$-net of the spherical cap
	$\{e\in\partial B_1 \mid e\cdot e_1\ge\frac{1}{4}\}$ if for any $e$ in the latter set there
	exists an $\tilde e$ in the former such that $|\tilde e-e|<\varepsilon$. Now (\ref{io02})
	can be rephrased in the following way: For a given $\varepsilon>0$,
	there exists an $\varepsilon$-net $\{\tilde e_1,\cdots,\tilde e_N\}$ such that
	\begin{align}\label{io03}
	(x_0-x_1)\cdot\tilde e_n\lesssim_{\varepsilon} E^\frac{1}{d+2}+\delta\quad\mbox{for}\;n=1,\cdots,N.
	\end{align}
	By elementary geometry, there exists $\varepsilon=\varepsilon(d)$
	such that the smaller spherical cap
	$\{e\in\partial B_1 \mid e\cdot e_1\ge\frac{1}{2}\}$ is contained in the
	convex hull of $\{\tilde e_1,\cdots,\tilde e_N\}\cup\{2\tilde e_1,\cdots,2\tilde e_N\}$,
	where $\{\tilde e_1,\cdots,\tilde e_N\}$ is an $\varepsilon$-net (for the larger spherical cap).
	This allows to
	pass from (\ref{io03}) to (\ref{eqn:globalbound2}) at the expense of a factor of two.

	Finally, we turn to the interior estimate (\ref{eqn:interiorbound2});
	we are given an $x_1\in B_{1/2}$ and momentarily fix an $e\in\partial B_1$.
	We first note that for a constant $1\le M<\infty$ to be fixed later, provided
	\begin{align}\label{io10}
	r\le M E^\frac{1}{d+2},\quad x_1\cdot e_1-\delta\gg_M E^\frac{1}{d+2},\quad E\ll_M 1,
	\end{align}
	we (easily) obtain the inclusion (\ref{io05}) from (\ref{eqn:Lip}).
	Based on this inclusion we showed above
	that for given $\varepsilon>0$, provided that in addition to (\ref{io10}) we have (\ref{io09}),
	we obtain (\ref{io02}). By the same $\varepsilon$-net argument as above,
	now applied to the simpler situation of the entire sphere instead of a spherical cap,
	eventually fixing an $\varepsilon$, this may be upgraded to
	\begin{align*}
	(x_0-x_1)\cdot e\lesssim r\quad\mbox{provided that}\ E^\frac{1}{d+2}\ll r \ \mbox{next to (\ref{io10})}.
	\end{align*}
	Fixing $M$ sufficiently large so that $r\le M E^\frac{1}{d+2}$ is not in conflict
	with $E^\frac{1}{d+2}\ll r$, we obtain (\ref{eqn:interiorbound2}) by the arbitrariness
	of $e\in\partial B$.
\end{proof}

\begin{remark}[Examples of separation of boundary layer]\label{rem:exseparation}
	We first consider a simple but important example in one dimension.
	Let $\Omega_0:=(0,2)$ and $\Omega_1:=(-1-\varepsilon,-1)\cup(0,2-\varepsilon)$, where $0<\varepsilon\ll1$.
	Notice that $\Omega_i\cap B_1=(0,1)$, and hence $\delta=0$.
	In this case, recalling that $T:\Omega_0\to\Omega_1$ is optimal if (and only if) $T\sharp\chi_{\Omega_0}=\chi_{\Omega_1}$ and $T=\nabla u$ a.e.\ for some convex function $u$, we can explicitly calculate the corresponding optimal transport map as
	\begin{align*}
		T(x)=
		\begin{cases}
			x-1-\varepsilon & {\rm for}\ x\in(0,\varepsilon),\\
			x-\varepsilon & {\rm for}\ x\in(\varepsilon,2).
		\end{cases}
	\end{align*}
	In particular, $E\lesssim \varepsilon$.
	We now find that in this example, $E\ll1$ and $\delta=0$ hold but the map $T$ has large transport near the origin.
	The topological condition prevents this kind of separation due to (one dimensional) disconnectedness, and thus we use the term ``topological''.

	The same idea also works in higher dimensions.
	Using the above notations, we define $\widetilde{\Omega}_i:=\Omega_i\times(-1,1)^{d-1}\subset\mathbb{R}^d$ for $i=0,1$; then the optimal transport map $\widetilde{T}:\widetilde{\Omega}_0\to\widetilde{\Omega}_1$ is expressed by $\widetilde{T}(x_1,\dots,x_d)=(T(x_1),x_2,\dots,x_d)$, and hence $\widetilde{T}$ has large transport near the flat boundary even though $E\ll1$ and $\delta=0$.
\end{remark}

\subsection{Topological condition}\label{subsec:Linfty3}

In this subsection we observe as a corollary of the above lemmas that the topological condition (\ref{eqn:topological}) is a simple sufficient condition for full $L^\infty$-bounds.
In terms of $\pi$, the topological condition supposes that $\pi$ transports any $x_0\in B_{1/2}$ into $B_1$, and likewise,
any $x_1\in B_{1/2}$ into $B_1$, which means

\begin{equation}\label{eqn:003}
	\begin{split}
		&(x_0,x_1)\in B_1\times B_1 \\
		&\mbox{for all}\;
		(x_0,x_1)\in {\rm supp}\pi\cap((B_{1/2}\times\mathbb{R}^d)\cup(\mathbb{R}^d\times B_{1/2})).
	\end{split}
\end{equation}

\begin{proposition}\label{prop:topLinfty}
	Suppose \eqref{eqn:Lip} holds.
	Under the topological condition \eqref{eqn:003}, if $E^\frac{1}{d+2}+\delta\ll1$, then for any $(x_0,x_1)\in{\rm supp}\pi\cap((B_{1/2}\times\mathbb{R}^d)\cup(\mathbb{R}^d\times B_{1/2}))$,
	\begin{align*}
		|x_0-x_1|\lesssim E^{1\over d+2}+\delta.
	\end{align*}
\end{proposition}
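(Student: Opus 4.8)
The plan is to combine the forward estimates of Lemma \ref{lem:Linftyimage} with the backward estimates of Lemma \ref{lem:Linftypreimage}, using the topological condition \eqref{eqn:003} precisely to control the one ``missing'' direction in the boundary layer. Recall that Lemma \ref{lem:Linftyimage}(2) gives a bound on $(x_1-x_0)\cdot e$ for all $e$ with $e\cdot e_1\ge 1/2$ (i.e.\ in a cap around the inward normal $e_1$), but says nothing about the outward direction $-e_1$; symmetrically Lemma \ref{lem:Linftypreimage}(2) bounds $(x_0-x_1)\cdot e$ for $e\cdot e_1\ge 1/2$. So from the forward lemma we get that $x_1-x_0$ cannot have a large component in the inward-pointing half-cone, and from the backward lemma that $x_0-x_1$ cannot either; the only way $|x_0-x_1|$ could still be large is if $x_1-x_0$ points (almost) in the outward direction $-e_1$, i.e.\ the transported point escapes outward across the boundary — this is exactly the ``separation of boundary layer'' of Remark \ref{rem:exseparation}, and it is what the topological condition forbids.

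First I would treat a pair $(x_0,x_1)\in\mathrm{supp}\,\pi$ with $x_0\in B_{1/2}$. By \eqref{eqn:003} we know $x_1\in B_1$, so $x_1\in\overline{\Omega_1}\cap B_1$ and hence $x_1\cdot e_1\ge -\delta$ by \eqref{eqn:Lip}. Apply Lemma \ref{lem:Linftyimage}(2) to all $e$ in the cap $\{e\cdot e_1\ge 1/2\}$: since this cap (together with its scalings by $2$) has convex hull containing a neighborhood of $e_1$ in all but the $-e_1$ direction, one obtains that the component of $x_1-x_0$ in \emph{every} direction except roughly $-e_1$ is $\lesssim E^{1/(d+2)}$; equivalently $(x_1-x_0)\cdot e\lesssim E^{1/(d+2)}$ for $e\cdot e_1\ge -c$ for some universal $c>0$, while the component $(x_1-x_0)\cdot(-e_1)=(x_0-x_1)\cdot e_1$ still needs to be bounded. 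For that last component I would use the backward estimate: since $x_1\in B_1$ we may not directly be in the hypothesis $x_1\in B_{1/2}$ of Lemma \ref{lem:Linftypreimage}, so instead I would exploit the cone geometry of $\Omega_0$ at $x_0$ directly — because $x_0\in\overline{\Omega_0}\cap B_{1/2}$ and $|x_0-x_1|$ large with $x_1-x_0\approx -|x_0-x_1|e_1$ would force $x_1\notin\overline{\Omega_1}$ for $\delta$ small (the outward direction leaves the half-space), contradicting the marginal condition. Making this quantitative: if $(x_0-x_1)\cdot e_1\gg E^{1/(d+2)}+\delta$ then $x_1\cdot e_1 < x_0\cdot e_1 - \gg(E^{1/(d+2)}+\delta)$; combined with the already-established bound on the tangential components of $x_1-x_0$ this places $x_1$ below the graph $g_1$, i.e.\ outside $\Omega_1$, a contradiction. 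Hence $(x_0-x_1)\cdot e_1\lesssim E^{1/(d+2)}+\delta$, and together with the cap estimate this yields $|x_0-x_1|\lesssim E^{1/(d+2)}+\delta$.

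Second, the symmetric case $x_1\in B_{1/2}$: now \eqref{eqn:003} gives $x_0\in B_1$, so $x_0\in\overline{\Omega_0}\cap B_1$ and $x_0\cdot e_1\ge -\delta$. Here I would run Lemma \ref{lem:Linftypreimage}(2) — whose hypotheses $x_1\in B_{1/2}$ are met — to bound $(x_0-x_1)\cdot e\lesssim E^{1/(d+2)}+\delta$ for all $e$ in the cap $\{e\cdot e_1\ge 1/2\}$, hence (by the same convex-hull argument) for all $e$ with $e\cdot e_1\ge -c$. The remaining component $(x_1-x_0)\cdot e_1 = (x_0-x_1)\cdot(-e_1)$ is controlled by the \emph{forward} geometric obstruction: $x_0\in\overline{\Omega_0}$ together with $x_1-x_0$ pointing nearly outward would push $x_0$ below the graph $g_0$ — contradiction — so $(x_1-x_0)\cdot e_1\lesssim E^{1/(d+2)}+\delta$ as well. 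Combining gives $|x_0-x_1|\lesssim E^{1/(d+2)}+\delta$ again.

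**Main obstacle.** The delicate point is that the topological condition only places the \emph{endpoints} $x_0,x_1$ in $B_1$, whereas Lemmas \ref{lem:Linftyimage} and \ref{lem:Linftypreimage} want one of the points in $B_{3/4}$ or $B_{1/2}$; so in the mixed situation (e.g.\ $x_0\in B_{1/2}$ but only $x_1\in B_1$) one cannot simply quote the backward lemma, and the ``last direction'' must be closed off by the elementary half-space/cone geometry of \eqref{eqn:Lip} rather than by monotonicity. Organizing this cleanly — isolating which single direction each lemma fails to control, verifying that the two failure-cones are opposite, and checking the geometric contradiction is robust for $\delta$ and $E^{1/(d+2)}$ small — is where the care is needed; the rest is a routine $\varepsilon$-net / convex-hull bookkeeping already used in the proof of Lemma \ref{lem:Linftypreimage}.
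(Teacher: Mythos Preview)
Your overall strategy is right in spirit --- use the boundary (cap) estimates of Lemmas \ref{lem:Linftyimage} and \ref{lem:Linftypreimage} for all directions with $e\cdot e_1\ge 1/2$, and close off the single outward direction $-e_1$ using the topological condition together with the Lipschitz half-space structure \eqref{eqn:Lip}. That is exactly what the paper does. But your execution has a genuine gap: the ``geometric obstruction'' you invoke for the $-e_1$ component only works when the base point is \emph{close to the boundary}, and you never separate out the interior case.

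Concretely, consider your Case~1 ($x_0\in B_{1/2}$). You claim that a large value of $(x_0-x_1)\cdot e_1$ would force $x_1$ below the graph of $g_1$. But take $x_0$ with $x_0\cdot e_1=0.3$ (well inside $\Omega_0$) and $x_1$ with $x_1\cdot e_1=0.01$; then $x_1\in\overline{\Omega_1}$ (since $g_1\approx 0$), yet $(x_0-x_1)\cdot e_1\approx 0.29$, which is \emph{not} $\lesssim E^{1/(d+2)}+\delta$. Your geometric contradiction never fires. The same issue appears (with the roles reversed and the directions confused) in your Case~2: bounding $(x_0-x_1)\cdot(-e_1)=x_1\cdot e_1-x_0\cdot e_1$ via $x_0\cdot e_1\ge -\delta$ only gives $(x_0-x_1)\cdot(-e_1)\le x_1\cdot e_1+\delta$, which is small \emph{only if $x_1$ is near the boundary}. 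The paper resolves this by an explicit dichotomy: if $x_1\cdot e_1\gg E^{1/(d+2)}+\delta$ one invokes the \emph{interior} estimate \eqref{eqn:interiorbound2} of Lemma~\ref{lem:Linftypreimage} and is done; otherwise $x_1\cdot e_1\lesssim E^{1/(d+2)}+\delta$, and \emph{then} the topological condition plus $x_0\cdot e_1\ge -\delta$ yields $(x_0-x_1)\cdot(-e_1)\lesssim E^{1/(d+2)}+\delta$, which combined with the cap estimate \eqref{eqn:globalbound2} covers all directions.

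A secondary point: your claim that the cap bound ``$(x_1-x_0)\cdot e\lesssim E^{1/(d+2)}$ for $e\cdot e_1\ge 1/2$'' extends by convex hull to all $e$ with $e\cdot e_1\ge -c$ is false --- the cone generated by the cap does not contain tangential directions. What \emph{is} true (and what the paper uses) is that the cap $\{e\cdot e_1\ge 1/2\}$ together with the single vector $-e_1$ generates, via bounded positive combinations, all of $\partial B_1$; so once you have the cap bound \emph{and} the $-e_1$ bound, you get the full estimate.
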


\begin{proof}
	We only consider the case that $(x_0,x_1)\in{\rm supp}\pi\cap(\mathbb{R}^d\times B_{1/2})$ since the other case is similar.
	If $x_1\cdot e_1 \geq 4c_0E^{1\over d+2}+\delta$, then the interior estimate (\ref{eqn:interiorbound2}) in Lemma \ref{lem:Linftypreimage} implies the assertion, so we may assume that $x_1\cdot e_1 < 4c_0E^{1\over d+2}+\delta$.
	Now we notice that $x_0\cdot e_1\geq-\delta$ follows from (\ref{eqn:003}); indeed, (\ref{eqn:003}) implies that $x_0$ belongs to $\overline{\Omega_0}\cap B_1$, which by \eqref{eqn:Lip} is contained in $\{x \mid x\cdot e_1>-\delta \}$.
	We thus obtain
	\begin{align}\label{eqn:topLinfty2}
		(x_0-x_1)\cdot(-e_1)\lesssim E^{1\over d+2}+\delta.
	\end{align}
	On the other hand, the boundary estimate (\ref{eqn:globalbound2}) in Lemma \ref{lem:Linftypreimage} implies that
	\begin{align}\label{eqn:topLinfty1}
		(x_0-x_1)\cdot e\lesssim E^{1\over d+2}+\delta \quad \textrm{for any}\ e\in\partial B_1 \ \textrm{with} \ e\cdot e_1\geq1/2.
	\end{align}
	Combining (\ref{eqn:topLinfty1}) with (\ref{eqn:topLinfty2}), we complete the proof.
\end{proof}

We are now in a position to prove Proposition \ref{prop:Linfty}.

\begin{proof}[Proof of Proposition \ref{prop:Linfty}]
	Without loss of generality we may assume that $\nu_0(0)=\nu_1(0)=-e_1$.
	Note that since $\delta^2\lesssim D$, cf.\ (\ref{ao019}), the assumption $E+D\ll1$ implies that $E^\frac{1}{d+2}+\delta\ll1$, and also, together with (\ref{eqn:tangency}), the Lipschitz condition in (\ref{eqn:Lip}).
	Hence, estimate (\ref{eqn:Linfty1}) immediately follows from Proposition \ref{prop:topLinfty} and $\delta^2\lesssim D$.
	In addition, the first part of (\ref{eqn:Linfty2}) is a direct consequence of (\ref{eqn:Linfty1}).
	Therefore, we only need to confirm the last part of (\ref{eqn:Linfty2}), i.e., that under the topological condition (\ref{eqn:003}), if $E^{1\over d+2}+\delta\ll1$, then for any $t\in[0,1]$, all pairs $(x_0,x_1)\in{\rm supp}\pi$ such that $tx_1+(1-t)x_0\in B_{1/4}$ are contained in $B_{1/2}\times\mathbb{R}^d$.

	Consider $x'_0:=\delta e_1$ so that $x'_0\in\overline{\Omega_0}\cap B_{1/2}$ for $\delta\ll1$.
	Hence there is some $x_1'\in\overline{\Omega_1}$ such that $(x_0',x_1')\in{\rm supp}\pi$ and, by Proposition \ref{prop:topLinfty}, $|x_1'-x_0'|\lesssim E^{1\over d+2}+\delta$.
	In particular, since $|x_0'|=\delta$,
	\begin{equation}\label{eq:Linfty01}
		|x_1'|+|x_0'| \lesssim E^{1\over d+2}+\delta.
	\end{equation}
	We now fix an arbitrary $t\in[0,1]$ and a pair $(x_0,x_1)\in{\rm supp}\pi$ such that $|tx_1+(1-t)x_0|<1/4$.
	Then, using the monotonicity $(x_1-x_1')\cdot(x_0-x_0')\geq0$ and $E^{\frac{1}{d+2}}+\delta\ll1$, we find that there is a universal constant $C\geq1$ such that
	\begin{align*}
		\frac{1}{16}+C(E^{1\over d+2}+\delta) &\geq (|tx_1+(1-t)x_0|+|tx_1'+(1-t)x_0'|)^2\\
		&\geq |t(x_1-x_1')+(1-t)(x_0-x_0')|^2\\
		&\geq t^2|x_1-x_1'|^2+(1-t)^2|x_0-x_0'|^2 \quad \text{(by monotonicity)}\\
		&\geq \frac{1}{2}\min\{|x_1-x_1'|^2,|x_0-x_0'|^2\}.
	\end{align*}
	Thus we find that
	$$|x_i-x_i'|^2\leq \frac{1}{8}+2C(E^{1\over d+2}+\delta) \quad \textrm{for}\ i=0 \ \textrm{or}\ i=1,$$
	and hence, since $E^\frac{1}{d+2}+\delta\ll1$ and $|x_i'|\ll1$ by \eqref{eq:Linfty01}, we have either $|x_0|<2/5$ or $|x_1|<2/5$.
	In case that $|x_0|<2/5$ the assertion directly follows, while if $|x_1|<2/5$, then $|x_0|<1/2$ by Proposition \ref{prop:topLinfty} and hence the assertion follows.
\end{proof}

We conclude this section by indicating that the topological condition always holds if we additionally assume the global condition required in Chen and Figalli's study.

\begin{remark}[Comparison with Chen and Figalli's assumption]\label{rem:chenfigalli}
	Consider the condition that for $i=0,1$, the set $\Omega_i$ is globally contained in a Lipschitz half-space, i.e., there is an extension $\bar{g}_i$ of $g_i$ to $\mathbb{R}^{d-1}$ such that $\|\bar{g}_i\|_\infty\leq \delta$ and
	\begin{align}\label{eqn:globalcondition}
		\Omega_i \subset H_i:=\{(s,x')\in \mathbb{R}^d \mid s>\bar{g}_i(x')\}.
	\end{align}
	This global condition clearly prevents large transportation in the direction of $-e_1$.
	We then find in the almost same way as proving \eqref{eqn:globalbound} and \eqref{eqn:globalbound2} that, under the condition (\ref{eqn:globalcondition}) for $i=0$ (resp.\ $i=1$), if $E+\delta\ll1$, then for any $(x_0,x_1)\in{\rm supp}\pi\cap(B_{1/2}\times\mathbb{R}^d)$ (resp.\ $(x_0,x_1)\in{\rm supp}\pi\cap(\mathbb{R}^d\times B_{1/2})$),
	\begin{align}
		|x_0-x_1|\lesssim E^{1\over d+2}+\delta,
	\end{align}
	and in particular ${\rm supp}\pi\cap(B_{1/2}\times\mathbb{R}^d)$ (resp.\ ${\rm supp}\pi\cap(\mathbb{R}^d\times B_{1/2})$) is contained in $B_1\times B_1$, so that the forward (resp.\ backward) topological condition is satisfied.

	An important point is that the assumption of Theorem \ref{thm:epsilonregularity} is always satisfied under Chen and Figalli's assumption (within the framework of the quadratic cost and constant densities).
	More precisely, they assume, normalizing $R$ to $1$,
	\begin{itemize}
		\item[(i)] the tangency condition (\ref{eqn:tangency}),
		\item[(ii)] $(H_i\cap B_1(p))\subset\Omega_i\subset (H_i\cap B_{4}(p))$ with $\bar{g}_i\in C^2$,
		\item[(iii)] global $L^\infty$ and $C^2$ smallness: $\|T-x\|_{L^\infty(\Omega_0)}+\|\bar{g}_0\|_{C^2}+\|\bar{g}_1\|_{C^2}\ll1$.
	\end{itemize}
	It is obvious that (iii) implies our smallness assumption (\ref{eqn:smallness}), and from the above we see that (ii) (with (iii)) implies the topological condition (\ref{eqn:topological}).
	(Incidentally, it seems that Chen and Figalli's argument also works for $C^{1,\alpha}$-domains.)

	We also notice that the forward topological condition independently follows from the global $L^\infty$-smallness in (iii), although the backward one does not; in this sense, it now turns out that the assumption (ii) for $i=1$ is unremovable in Chen and Figalli's result.
\end{remark}

\section{Reduction from Lagrangian to Eulerian}\label{sec:reduction}

For the harmonic approximation result (Proposition \ref{prop:harmoniclagrangian}) we mainly argue in the Eulerian formulation, following the interior theory \cite{GoldmanOtto}.
This section is devoted to demonstrating how to reduce Proposition \ref{prop:harmoniclagrangian} described in the Lagrangian coordinate into a statement in the Eulerian coordinate.
Section \ref{subsec:hypotheses} exhibits main hypotheses which we assume throughout the proof of harmonic approximation.
In Section \ref{subsec:Eulerianformulation} we define the Eulerian formulation of the optimal transport problem.
Finally, in Section \ref{subsec:reductionLE}, we give an Eulerian statement and prove that the given statement indeed implies \ref{prop:harmoniclagrangian}.

\subsection{Hypotheses}\label{subsec:hypotheses}

Since we are going to prove Proposition \ref{prop:harmoniclagrangian}, we give ourselves an arbitrary $\varepsilon\in(0,1]$.

For later purpose it is convenient to pass from balls $B_R$ of radius $R$ to cubes $Q_R=(-R,R)^d$ of half-side length $R$, both centered at the origin.
We first adapt our abbreviations for the transportation cost $E'$
and for the (squared) deviation of the boundaries from being flat $D'$ accordingly,
both of which we assume to be bounded by original quantities $E$ and $D$ in the larger ball $B_{32\sqrt{d}}$ (dropping $32\sqrt{d}$ by $E_{32\sqrt{d}}$ and $D_{32\sqrt{d}}$ to lighten notation):
\begin{align}
E'&:=\int_{(Q_{8}\times\mathbb{R}^d)\cup(\mathbb{R}^d\times Q_{8})}
|x_1-x_0|^2\pi(dx_0dx_1) \lesssim E \ll_\varepsilon 1,\label{ao73}\\
D'&:=\max_{i=0,1}[\nabla' g_i]^2_{\alpha,Q_{8}'} \lesssim D \ll_\varepsilon 1,\label{ao83}
\end{align}
where $Q_{R}':=(-R,R)^{d-1}$ and $g_i$ denotes the local graph representation of the $C^{1,\alpha}$-boundary $\partial\Omega_i$:
\begin{align}\label{ao17}
\Omega_i\cap Q_{8}=\{(s,x')\in Q_{8} \mid s>g_i(x')\}\quad\mbox{for}\;i=0,1,
\end{align}
satisfying the tangency condition, meaning
\begin{align}\label{ao82}
g_i(0)=\nabla' g_i(0)=0\quad\mbox{for}\;i=0,1.
\end{align}
Note that the estimate $E'\lesssim E$ in \eqref{ao73} is not a trivial hypothesis since $E'$ measures not only forward but also backward transports as opposed to $E$, but not restrictive thanks to the $L^\infty$-bounds (see Step 1 in the proof of Proposition \ref{prop:harmoniclagrangian} below).

In addition, we demote the Eulerian statement in the sense that all the necessary $L^\infty$-bounds are also supposed as hypotheses.
Namely, we repeatedly use the following $L^\infty$-bounds:
\begin{align}\label{ao53}
\begin{array}{c}
|x_1-x_0|\le M\ \mbox{with}\;M\lesssim E^\frac{1}{d+2}+D^\frac{1}{2}\\[1ex]
\mbox{for all}\ (x_0,x_1)\in
{\rm supp}\pi\cap((Q_{4}\times\mathbb{R}^d)\cup(\mathbb{R}^d\times Q_{4})).
\end{array}
\end{align}
Moreover, we will use the following consequence of monotonicity (and nondegeneracy of the densities):
\begin{align}\label{ao72}
\begin{array}{c}
\mbox{For any}\;(x_0,x_1)\in{\rm supp}\pi,\\[1ex]
\mbox{if}\; tx_1+(1-t)x_0\in Q_2\;\mbox{holds for some}\;t\in[0,1],\\[1ex]
\mbox{then}
\;(x_0,x_1)\in(Q_{4}\times\mathbb{R}^d)\cup(\mathbb{R}^d\times Q_{4}).
\end{array}
\end{align}

We finally suppose the unit-value hypothesis that
\begin{equation}\label{ao98}
	\lambda = 1,
\end{equation}
since the case of general $\lambda$ can be reduced to the unit value case with the help of Lemma \ref{lem:value} below (see Section \ref{subsec:reductionLE} for details).

\subsection{Eulerian formulation}\label{subsec:Eulerianformulation}

We now introduce the Eulerian formulation of the optimal transportation problem (see also \cite{GoldmanOtto,GoldmanHuesmannOtto}, or \cite[Section 5.4]{Villani}).
As is supposed in \eqref{ao98} we argue only for $\lambda=1$.

To clarify the meaning it is convenient to first introduce the ensemble of all trajectories. 
We define $\mathbb{P}$ as a non-negative measure on straight trajectories; namely, $\mathbb{P}$ is the push forward of the optimal transference plan $\pi$ under the map $F:\mathbb{R}^d\times\mathbb{R}^d \to \mathbb{X}$ that sends $(x_0,x_1)$ to the trajectory $X$ in form of
$X(t)=tx_1 + (1-t)x_0$ ($t\in[0,1]$), where $\mathbb{X}$ denotes the set of all straight trajectories equipped with e.g.\ the $L^\infty$-distance.
Then, for all Borel function $\zeta$ on $\mathbb{R}^d$ and for all $t\in[0,1]$, we have
\begin{align}\label{ao40}
\int\zeta(tx_1+(1-t)x_0)\pi(dx_0dx_1) = \int\zeta(X(t))\mathbb{P}(dX)
\end{align}
by using the change of variables $\int G\circ F d\pi=\int G d\mathbb{P}$ for the measurable function $G:X \mapsto \zeta(X(t))$ on $\mathbb{X}$; in particular,
\begin{align}\label{ao41}
\int_{\Omega_0}\zeta=\int\zeta(X(0))\mathbb{P}(dX)\quad\mbox{and}\quad
\int_{\Omega_1}\zeta=\int\zeta(X(1))\mathbb{P}(dX).
\end{align}
This ``trajectory point of view'' will be repeatedly used in our proof.

We now introduce an ``Eulerian point of view'' by defining the pair $(\rho,j)$ as $(d\rho,dj):=(dt\rho_t(dx),dtj_t(dx))$ with a non-negative measure $\rho_t$ and an $\mathbb{R}^d$-valued measure $j_t$ on $\mathbb{R}^d$ for $t\in[0,1]$ so that
\begin{equation}\label{ao39}
	\begin{split}
		\int\zeta(x)\rho_t(dx) & =  \int\zeta(X(t))\mathbb{P}(dX),\\
		\int\xi(x)\cdot j_t(dx) & =  \int\xi(X(t))\cdot\dot X(t)\mathbb{P}(dX),
	\end{split}
\end{equation}
for all $\zeta\in C_c(\mathbb{R}^d)$ and $\xi\in C_c(\mathbb{R}^d;\mathbb{R}^d)$.
By definition \eqref{ao39} the pair $(\rho,j)$ solves the continuity equation $\partial_t\rho+\nabla\cdot j=0$ subject to the boundary condition $\rho_0=\chi_{\Omega_0}$ and $\rho_1=\chi_{\Omega_1}$ in the distributional sense, i.e.,
\begin{equation}\label{ao97}
	\begin{split}
		\int_{(0,1)\times\mathbb{R}^d} \partial_t\zeta d\rho + \nabla\zeta \cdot dj = \int_{\mathbb{R}^d}\zeta(1,\cdot)\chi_{\Omega_1} - \int_{\mathbb{R}^d}\zeta(0,\cdot)\chi_{\Omega_0}\\
		\mbox{for all}\ \zeta\in C^1_c([0,1]\times\mathbb{R}^d).
	\end{split}
\end{equation}
Note that in view of the advection equation, $\frac{j}{\rho}$ corresponds to the velocity field.

The pair $(\rho,j)$ has the following minimizing property, which plays a crucial role in our proof.
By the Benamou-Brenier formula (cf.\ \cite[Theorem 8.1]{Villani} and \cite[Chapter 8]{AmbrosioGigliSavare}) the pair $(\rho,j)$ defined through \eqref{ao39} solves the Eulerian formulation of the optimal transportation problem
\begin{equation}\label{ao36}
\min_{(\tilde{\rho},\tilde{j})}\left.\left\{ \int\frac{1}{\tilde{\rho}}|\tilde{j}|^2 \ \right| \
	(\tilde{\rho},\tilde{j})\ \mbox{satisfies}\ (\ref{ao97})
\right\},
\end{equation}
where we define the integrand by a dual formulation: For every finite measure $\tilde{\rho}$ and $\mathbb{R}^d$-valued measure $\tilde{j}$ on $[0,1]\times\mathbb{R}^d$,
\begin{equation}\label{ao96}
	\int\frac{1}{\tilde{\rho}}|\tilde{j}|^2 := \sup_{\xi\in C_c([0,1]\times\mathbb{R}^d;\mathbb{R}^d)}\left(\int 2\xi\cdot d\tilde{j} - \int |\xi|^2d\tilde{\rho}\right).
\end{equation}
An advantage of this Eulerian formulation is admitting singular measures (with respect to the Lebesgue measure) as competitors; in fact, in the proof of Lemma \ref{Lecomp} below, we will construct a measure $(\rho^\mathrm{sing},j^\mathrm{sing})$ that contributes to the competitor and is concentrated on the boundary, thus being singular.
Recall (cf.\ \cite[Proposition 5.18]{Santambrogio}) that if the energy \eqref{ao96} is finite, or equivalently if $\tilde{\rho}\geq0$ and $\tilde{j}\ll\tilde{\rho}$, then by using the Radon-Nikodym derivative (velocity) we have
$$\int\frac{1}{\tilde{\rho}}|\tilde{j}|^2 = \int_{(0,1)\times\mathbb{R}^d} \left|\frac{d\tilde{j}}{d\tilde{\rho}}\right|^2d\tilde{\rho},$$
and if in addition $\tilde{\rho}$ (and thus also $\tilde{j}$) is absolutely continuous with respect to the Lebesgue measure on $[0,1]\times\mathbb{R}^d$, then we have the pointwise understanding that
$$\int_{(0,1)\times\mathbb{R}^d}\frac{1}{\tilde{\rho}}|\tilde{j}|^2 = \int_{(0,1)\times\mathbb{R}^d} \frac{1}{\tilde{\rho}(t,x)}|\tilde{j}(t,x)|^2dtdx,$$
where if $\tilde{\rho}(t,x)=0$ and $\tilde{j}(t,x)=0$ (resp.\ $\tilde{j}(t,x)\neq0$), then we interpret the value of the integrand as $0$ (resp.\ $\infty$).
Note that if $(\rho,j)$ is a minimizer of \eqref{ao36}, then ($j_t\ll$) $\rho_t\ll\mathcal{L}^d$, where $\rho_t:=\rho(t,\cdot)$, $j_t:=j(t,\cdot)$, and $\mathcal{L}^d$ is the Lebesgue measure on $\mathbb{R}^d$ (cf.\ \cite[Section 3.1]{GoldmanOtto}), and hence the integrand in (\ref{ao36}) can be interpreted pointwise; this fact is a qualitative consequence of McCann's displacement convexity \cite{McCann} (see also \cite[Lemma 3.2]{GoldmanOtto}), which shows
that the trivial bounds on the initial and terminal data $\chi_{\Omega_0},\chi_{\Omega_1}\le 1$ are preserved:
\begin{align}\label{ao54}
\rho\le 1.
\end{align}
This quantitative result will also greatly simplify our proof in Section \ref{sect:eulerian}.

We finally remark that in our argument we do not appeal to the global minimality \eqref{ao36} but the local one (see Lemma \ref{lem:localoptimality} below for details).

\subsection{Reduction from Lagrangian to Eulerian}\label{subsec:reductionLE}

We are now in a position to state the main harmonic approximation result in terms of the Eulerian formulation.

\begin{proposition}[Harmonic approximation in a cube]\label{Prharm}
Given $\varepsilon\in(0,1]$, and under all the hypotheses from \eqref{ao73} to \eqref{ao98}, there exists a harmonic gradient $\nabla\phi$ in $\overline{Q_1}$ close to the velocity $\frac{j}{\rho}$
in the sense of
\begin{align}
\int_{(0,1)\times Q_1}\frac{1}{\rho}|j-\rho\nabla\phi|^2&\le
\varepsilon E+\frac{C}{\varepsilon}D,\label{ao88}\\
\int_{Q_1}|\nabla\phi|^2&\le CE,\label{ao81}
\end{align}
where $C>0$ only depends on $d$, and in addition symmetric in the sense of
\begin{align}\label{ao80}
\phi(s,x')=\phi(-s,x') \quad \textrm{for all}\ (s,x')\in Q_1.
\end{align}
\end{proposition}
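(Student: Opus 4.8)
The plan is to follow the variational strategy of \cite{GoldmanOtto,GoldmanHuesmannOtto}, comparing the Eulerian flux $(\rho,j)$ with a competitor built from a harmonic gradient field, but now paying careful attention to the boundary layer where $\partial\Omega_0$ and $\partial\Omega_1$ sit. First I would record the local minimality of $(\rho,j)$ (Lemma \ref{lem:localoptimality}): $(\rho,j)$ minimizes $\int\frac1{\tilde\rho}|\tilde j|^2$ against all competitors agreeing with $(\rho,j)$ outside, say, $(0,1)\times Q_2$, subject to the continuity equation. The upshot is that if we produce \emph{any} admissible competitor $(\tilde\rho,\tilde j)$ supported (in its perturbation) inside a slightly smaller cube, then $\int\frac1\rho|j|^2\le\int\frac1{\tilde\rho}|\tilde j|^2$ on that region, and the difference can be turned into an $L^2$-estimate on $j-\rho\nabla\phi$ via a ``three-square'' inequality (orthogonality of the minimizer). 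The heuristic is that $\rho\approx\chi$(half-space), $j\approx\rho(T-x)$ is small in $L^2$ by \eqref{ao73}, and to leading order the Eulerian problem on a half-space with a nearly-flat Neumann-type boundary condition linearizes to a Poisson equation $\Delta\phi=0$ (constant densities, same value $\lambda=1$ by \eqref{ao98}) with vanishing normal derivative on the flat wall, whence the symmetry \eqref{ao80} by reflection.

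The key steps, in order: (i) choose a good ``flat'' reference configuration — reflect $(\rho,j)$ across the plane $\{s=0\}$ so that the problem becomes one on a full neighborhood of the origin with a symmetric flux, at the cost of the $C^{1,\alpha}$-defect of the two graphs $g_0,g_1$, which is controlled by $D$ via \eqref{ao83}; (ii) solve the constant-coefficient problem: let $\phi$ be harmonic in $Q_r$ (for a dimensional $r$) with boundary data matching, in a weak flux sense, the boundary traces of $(\rho,j)$ on $\partial Q_r$, plus the symmetry \eqref{ao80}; standard elliptic estimates give \eqref{ao81}; (iii) construct the competitor: interpolate between the actual flux near $\partial Q_r$ and $\rho\nabla\phi$ in the interior, and — this is the delicate part — insert a \emph{singular} measure $(\rho^{\mathrm{sing}},j^{\mathrm{sing}})$ concentrated on the boundary $\partial\Omega_i$ to absorb the mismatch coming from the fact that $\rho_0=\chi_{\Omega_0}$ and $\rho_1=\chi_{\Omega_1}$ have slightly different (curved) supports; this is exactly Lemma \ref{Lecomp}, and it is built so that the continuity equation \eqref{ao97} still holds; (iv) estimate the competitor's energy: the interior interpolation contributes $\varepsilon E$ (choosing interpolation scales appropriately), the boundary singular part contributes $O(D)$ — here the $C^{1,\alpha}$ bound on $\nabla' g_i$ and the width bound $\delta^2\lesssim D$ are used, together with the $L^\infty$-bounds \eqref{ao53} to localize mass — and the cross terms are handled by Young's inequality, producing the $\frac{C}\varepsilon D$ term; (v) feed this into local minimality and use the orthogonality of $(\rho,j)$ to conclude \eqref{ao88}.

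The main obstacle will be step (iii)–(iv): the construction of the boundary competitor $(\rho^{\mathrm{sing}},j^{\mathrm{sing}})$ and the bookkeeping of its energy. One must route the ``excess'' mass between $\Omega_0\cap Q_r$ and $\Omega_1\cap Q_r$ — these differ by a thin curved sliver near $\{s=g_i(x')\}$ — along the boundary in a way that (a) exactly fixes the continuity equation after the interior field $\rho\nabla\phi$ has been substituted, (b) respects the reflection symmetry, and (c) has Benamou–Brenier energy $\lesssim D$. Controlling (c) is where the $\alpha$-Hölder seminorm of $\nabla'g_i$ enters quadratically; a naive estimate would only give a power of $\delta$, so one needs the sharper accounting of \cite{GoldmanHuesmannOtto} (flattening the boundary by an almost-harmonic extension and estimating the transport of the sliver by its ``vertical'' width, which is $O(\delta)$ but whose \emph{gradient defect} is what actually costs, scaling like $[\nabla'g_i]_\alpha r^\alpha$). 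A secondary technical point is that $\rho$ is only known to satisfy $\rho\le1$ and $\rho_t\ll\mathcal L^d$ — not bounded below — so the division by $\rho$ in \eqref{ao88} must be handled through the dual formulation \eqref{ao96}, and the three-square/orthogonality manipulation has to be phrased so as not to divide by a possibly vanishing density; this is a routine but non-trivial adaptation of the corresponding interior argument.
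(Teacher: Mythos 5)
Your high-level strategy is correct — local minimality plus an approximate-orthogonality (``three-square'') inequality reducing to a competitor-construction problem is exactly the structure of Lemmas~\ref{lem:localoptimality}, \ref{Leorth}, \ref{Lecomp} — but several of the concrete mechanisms you propose are misidentified, and two of them are load-bearing.

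The competitor is substantially more elaborate than ``interior interpolation plus a singular boundary sliver.'' Crucially, one must first remove from the boundary-layer part of the construction those trajectories that cross $(0,1)\times\partial Q$ inside the thin strip $\{x_1<\delta\}$ or during the time windows $(0,\tau)\cup(1-\tau,1)$; these ``kept'' trajectories are retained unchanged and carry a higher-order $O(ME)$ cost. This is not cosmetic: the boundary-layer construction of \cite{GoldmanOtto} that provides the main interior piece requires the flux across $\partial Q$ to have vanishing time average, and the thin-strip flux is not controllable at the right scale. Removing the kept trajectories then forces initial/terminal layers on $(0,\tau)\times Q$ and $(1-\tau,1)\times Q$ to restore the initial and terminal data (costing $\tfrac{1}{\tau}$ times an $H^{-1}$-type norm of a density concentrated near $\partial Q$), and it forces a ``good-slice'' choice of cube side $R\in(1,2)$ (Lemma~\ref{Legood}). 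None of this appears in your plan, and without it the energy of the competitor cannot be brought below $\varepsilon E+\tfrac{C}{\varepsilon}D$. Separately, you place the singular measure on $\partial\Omega_i$ and credit it with an $O(D)$ contribution; in fact the singular piece lives on $\partial Q$ and costs \emph{zero} (stationary particles buffering early-entering/late-exiting flux), while the $\Omega_0/\Omega_1$ sliver is handled by an \emph{absolutely continuous} normal shear at cost $\lesssim\delta^3$, strictly super-linear in $E+D$. The genuine $\tfrac{C}{\varepsilon}D$ term comes instead from the orthogonality side, via the pairing $\int(\chi_{\Omega_1}-\chi_{\Omega_0})\phi\lesssim E^{1/2}\delta$ combined with Young's inequality. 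Finally, one does not reflect $(\rho,j)$ (the reflected object is not a transport plan and has no minimality); rather, one solves the Neumann problem \eqref{ao05} on the half-cube $Q\cap\{x_1>0\}$ with zero flux on the flat wall and a \emph{constant} right-hand side $c$ with $|c|\lesssim E^{1/2}$ (for Neumann compatibility — not $\Delta\phi=0$), then extends $\nabla\phi$ to $Q$ by even reflection to obtain \eqref{ao80}.
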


The proof of Proposition \ref{Prharm} is given in Section \ref{sect:eulerian}.
Note that the l.h.s.~of \eqref{ao88} is well defined by the pointwise understanding as in Section \ref{subsec:Eulerianformulation} since if $\rho=0$ then $|j-\rho\nabla\phi|=|j|$.

In the remainder we prove that Proposition \ref{prop:harmoniclagrangian} indeed follows from Proposition \ref{Prharm}.
To this end we first verify a control of the value of the target density so that we will be able to remove the assumption $\lambda=1$, cf.\ \eqref{ao98}, by a simple scaling argument.

\begin{lemma}[Control of values]\label{lem:value}
	Let $T$ be an optimal transport map well prepared in $B_1$.
	If $D\ll1$, then $|\lambda-1|^2\lesssim E$.
\end{lemma}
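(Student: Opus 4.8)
The plan is to extract the relation $|\lambda-1|^2\lesssim E$ by testing the mass balance against a suitable function that sees the boundary only weakly. Recall that mass balance gives $|\Omega_0\cap B_{1/2}| = \lambda|T(\Omega_0\cap B_{1/2})|$ only after correcting for mass that leaves or enters through $\partial B_{1/2}$; the point is that, since $T$ is well prepared, the topological condition forces $T(\Omega_0\cap B_{1/2})\subset B_1$ and $T^{-1}(\Omega_1\cap B_{1/2})\subset B_1$, so there is no large-scale leakage, only a flux through the sphere of size controlled by $\sup|T-x|$. Concretely, first I would use the push-forward identity $\int_{\Omega_0}\zeta\circ T\,dx = \lambda\int_{\Omega_1}\zeta\,dy$ for an appropriate cutoff $\zeta$ supported in $B_{3/4}$, equal to $1$ on $B_{1/2}$.

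The key quantitative input is the $L^\infty$-bound: since $D\ll1$ we are free to reduce $E$ (if $E$ is not already small the statement is trivial because $|\lambda-1|\lesssim 1$), so we may assume $E+D\ll1$ and invoke Proposition~\ref{prop:Linfty}, giving $\sup_{\Omega_0\cap B_{1/2}}|T-x| + \sup_{\Omega_1\cap B_{1/2}}|T^{-1}-y| \lesssim E^{1/(d+2)}+D^{1/2}$. Hmm — but that bound is only $E^{1/(d+2)}$, not $E^{1/2}$, so plugging it in naively into a flux estimate would give $|\lambda-1|\lesssim E^{1/(d+2)}$, which is weaker than claimed. So the second, more careful step is to avoid the crude pointwise bound and instead estimate the flux through $\partial B_r$, integrated in $r$ over a dyadic annulus, by the $L^2$-cost directly: $\int_{B_{3/4}\setminus B_{1/2}}|T-x|\,dx \lesssim (\int_{B_{3/4}}|T-x|^2)^{1/2} \lesssim E^{1/2}$, and likewise for $T^{-1}$. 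Choosing a good radius $r\in(1/2,3/4)$ (a mean-value/Fubini argument in $r$) for which the flux through $\partial B_r$ is $\lesssim E^{1/2}$, together with $|B_r\cap\Omega_0| = \lambda|B_r\cap\Omega_1| + O(E^{1/2})$ and $|B_r\cap\Omega_0| = |B_r\cap\Omega_1| + O(\delta)$ (from the tangency condition, since $\Omega_0$ and $\Omega_1$ are both close to the same half-space, and $\delta^2\lesssim D$ so $\delta\lesssim D^{1/2}\lesssim E^{1/2}$ after again reducing to the small-$E$ regime — wait, that last reduction is illegitimate since $D$ is fixed; instead one simply gets $|\lambda-1|\bigl(|B_r\cap\Omega_1|\bigr)\lesssim E^{1/2}+\delta$).

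Let me reconsider the $\delta$ term: the cleanest route is to note $|B_r\cap\Omega_0|$ and $|B_r\cap\Omega_1|$ are both comparable to a fixed universal constant $\sim 1$ (the domains fill half of $B_r$ up to the width $\delta\ll1$), so from $\lambda|B_r\cap\Omega_1| = |B_r\cap\Omega_0| + O(E^{1/2})$ we get $|\lambda-1| \lesssim |\,|B_r\cap\Omega_0| - |B_r\cap\Omega_1|\,| + E^{1/2} \lesssim \delta + E^{1/2}$, and then $\delta^2\lesssim D$ gives $|\lambda-1|^2\lesssim D + E$. That is still not quite $|\lambda-1|^2\lesssim E$ — so evidently the intended statement must squeeze out the $D$-dependence, presumably because under the tangency condition the \emph{symmetric difference} $|(B_r\cap\Omega_0)\,\triangle\,(B_r\cap\Omega_1)|$ is not just $O(\delta)$ but can be absorbed: more precisely, any mass of $\Omega_0$ in $B_r\setminus\Omega_1$ (or vice versa) near the boundary must be transported a definite distance and hence contributes to $E$, not to an uncontrolled error. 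So the real step is to bound $|(B_r\cap\Omega_0)\setminus\Omega_1|$ and $|(B_r\cap\Omega_1)\setminus\Omega_0|$ by $E^{1/2}$ too, using that points of $\Omega_0\setminus\Omega_1$ near $\partial B_r$ get mapped outside a neighborhood of themselves.

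The main obstacle I anticipate is precisely this last point: getting the clean estimate $|\lambda-1|^2\lesssim E$ \emph{without} a residual $D$-term requires showing the boundary-layer mismatch between $\Omega_0$ and $\Omega_1$ is itself charged to the transport cost $E$ rather than to the geometric deviation $D$. I expect one handles it by choosing the test radius $r$ and the cutoff so that the only contributions are (i) the flux of the near-identity map $T$ through $\partial B_r$, bounded by $E^{1/2}$ via Cauchy–Schwarz against the $L^2$-cost, and (ii) the region where $T$ or $T^{-1}$ moves a point by a definite amount, whose measure is again $\lesssim E$ (hence $\lesssim E^{1/2}$ after a square root) by Chebyshev applied to $\int|T-x|^2$. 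Squaring the resulting bound $|\lambda-1|\lesssim E^{1/2}$ finishes the proof; the use of $D\ll1$ enters only to guarantee the well-preparedness hypotheses of Proposition~\ref{prop:Linfty} and the half-space normalization, and to reduce to the case $E\ll1$.
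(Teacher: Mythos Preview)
Your approach has a genuine gap: the $\delta$-term you identify cannot be eliminated by the argument you sketch. The claim that ``any mass of $\Omega_0$ in $B_r\setminus\Omega_1$ must be transported a definite distance'' is false --- a point in $\Omega_0\setminus\Omega_1$ in the boundary layer can lie at distance $\lesssim\delta$ from $\Omega_1$ and be transported only that far, contributing merely $O(\delta^3)$ to $E$. So Chebyshev on $\int|T-x|^2$ does not control the measure of the mismatch region, and your cutoff supported on $B_{3/4}$ (hence intersecting the boundary layer) inevitably picks up $\int\zeta(\chi_{\Omega_1}-\chi_{\Omega_0})=O(\delta)$ on the left-hand side. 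You end up with $|\lambda-1|\lesssim E^{1/2}+\delta$, i.e.\ $|\lambda-1|^2\lesssim E+D$, which is strictly weaker than the claim.

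The paper's proof sidesteps the boundary layer entirely by a simple device you missed: take a smooth test function $\eta$ with $\int\eta=1$ whose support is a small ball \emph{compactly contained in $\Omega_0\cap\Omega_1\cap B_1$}, for instance $\overline{B_{1/5}(\tfrac12 e_1)}$ (this is where $D\ll1$ is used, and only here). Then the marginal condition gives exactly
\[
\int\big(\eta(x_1)-\eta(x_0)\big)\,\pi(dx_0dx_1)=\lambda\int\eta-\int\eta=\lambda-1,
\]
with no geometric error term, and the Lipschitz bound $|\eta(x_1)-\eta(x_0)|\lesssim|x_1-x_0|$ together with Cauchy--Schwarz over $(B_1\times\mathbb{R}^d)\cup(\mathbb{R}^d\times B_1)$ yields $|\lambda-1|\lesssim E^{1/2}$ directly. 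No good-slice argument, no flux estimate, no $L^\infty$-bounds from Proposition~\ref{prop:Linfty} are needed.
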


\begin{proof}
	Without loss of generality we may assume that $\nu_i(0)=-e_1$.
	Thanks to the well-preparedness, the assumption $D\ll1$ implies that $B_{1/5}(\frac{1}{2}e_1)$ is compactly contained in $\Omega_0\cap\Omega_1\cap B_1$.
	Fix any $\eta=\eta(d)\in C^1_0(\mathbb{R}^d)$ such that
	\begin{align}
		\int_{\mathbb{R}^d}\eta &= 1, \label{ao004}\\
		{\rm supp}\eta &= \overline{B_{1/5}(\tfrac{1}{2}e_1)}\subset\Omega_0\cap\Omega_1\cap B_1.\label{ao005}
	\end{align}
	Then we have
	\begin{align*}
		\int_{(B_1\times\mathbb{R}^d)\cup(\mathbb{R}^d\times B_1)}(\eta(x_1)-\eta(x_0))\pi(dx_0dx_1)
		\stackrel{\eqref{ao005}}{=} \int_{\mathbb{R}^d\times\mathbb{R}^d}(\eta(x_1)-\eta(x_0))\pi(dx_0dx_1),
	\end{align*}
	and by the marginal condition the r.h.s.\ turns into
	\begin{align*}
		\int_{\mathbb{R}^d}\eta \lambda\chi_{\Omega_1}-\int_{\mathbb{R}^d}\eta\chi_{\Omega_0} \stackrel{\eqref{ao005}}{=} \lambda\int_{\mathbb{R}^d}\eta - \int_{\mathbb{R}^d}\eta \stackrel{\eqref{ao004}}{=} \lambda-1.
	\end{align*}
	Hence, noting that $|\eta(x_1)-\eta(x_0)|\leq\sup|\nabla\eta||x_1-x_0|\lesssim|x_1-x_0|$, we find that
	\begin{align*}
		|\lambda-1| &\lesssim \int_{(B_1\times\mathbb{R}^d)\cup(\mathbb{R}^d\times B_1)}|x_1-x_0|\pi(dx_0dx_1)\\
		&\lesssim E^{1/2}\left(\int_{(B_1\times\mathbb{R}^d)\cup(\mathbb{R}^d\times B_1)}\pi(dx_0dx_1)\right)^{1/2}\\
		&= E^{1/2}\left(|B_1\cap\Omega_0|+\lambda|B_1\cap\Omega_1|\right)^{1/2} \lesssim E^{1/2},
	\end{align*}
	where the condition $\lambda\leq 4$ is used in the last estimate.
\end{proof}

We now deduce Proposition \ref{prop:harmoniclagrangian} from Proposition \ref{Prharm}.

\begin{proof}[Proof of Proposition \ref{prop:harmoniclagrangian}]
	Up to rotation we may assume that $\nu=-e_1$.
	We divide our proof into two steps.
	In Step 1 we first prove the unit-value case $\lambda=1$ by using Proposition \ref{Prharm}; a part of the proof is parallel to \cite{GoldmanOtto}.
	We then reduce the case of general $\lambda\in[1/4,4]$ to the unit-value case in Step 2.

	{\em Step 1: $\lambda=1$.}
	By rescaling we may construct a harmonic gradient in $B_{1/4}$ under the assumptions of Proposition \ref{prop:harmoniclagrangian} in the larger ball $B_{32\sqrt{d}}$, namely, the well-preparedness in $B_{32\sqrt{d}}$ with $\lambda=1$ and the smallness $E+D \ (=E_{32\sqrt{d}}+D_{32\sqrt{d}})\ll_\varepsilon1$.

	Given any $\varepsilon\in(0,1]$, we first check all the hypotheses in Proposition \ref{Prharm} (except for \eqref{ao98} since it is already assumed).
	Since all the $L^\infty$-bounds in Proposition \ref{prop:Linfty} hold in $B_{8\sqrt{d}}$ and hence in $Q_{8}$, we in particular find that $((Q_8\times\mathbb{R}^d)\cup(\mathbb{R}^d\times Q_8))\cap\mathrm{supp}\pi \subset B_{32\sqrt{d}}\times\mathbb{R}^d$ and thus \eqref{ao73} holds.
	It is now straightforward to check all the remaining hypotheses from (\ref{ao83}) to (\ref{ao98}).
	Therefore, we may apply Proposition \ref{Prharm} and deduce that there is a harmonic gradient $\nabla\phi$ on $Q_1$ with symmetry \eqref{ao80} satisfying \eqref{ao88} and \eqref{ao81}.

	We now prove that the restriction of $\nabla\phi$ to $B_{1/4}$ has the desired properties.
	Obviously, \eqref{eqn:harmonicDirichlet} follows by \eqref{ao81}, and \eqref{eqn:harmonicsymmetry} (with $\nu=-e_1$) by \eqref{ao80}.
	In the remainder we prove that $\nabla\phi$ satisfies \eqref{eqn:harmonicmainestimate} by translating \eqref{ao88} back into the Lagrangian coordinate.
	We first note that the $L^\infty$-bounds \eqref{ao53} and \eqref{ao72} imply that
	\begin{align}\label{ao009}
		T_t(B_{1/4}\cap\Omega_0)\subset B_{1/2},
	\end{align}
	where we recall $T_t:=tT+(1-t)Id$.
	By the triangle inequality and by $\phi=\phi|_{B_{1/4}}$,
	\begin{align}
		&\int_{B_{1/4}}|T-(x+\nabla\phi)|^2\chi_{\Omega_0} \nonumber\\
		&\qquad \lesssim \int_0^1\int_{B_{1/4}}|T-(x+\nabla\phi\circ T_t)|^2\chi_{\Omega_0} + \int_0^1\int_{B_{1/4}}|\nabla\phi-\nabla\phi\circ T_t|^2\chi_{\Omega_0}.\label{ao007}
	\end{align}
	We first estimate the former term in (\ref{ao007}).
	We infer from \eqref{ao39} that $\rho_t=T_t\sharp\chi_{\Omega_0}$ and $j_t=T_t\sharp[(T-Id)\chi_{\Omega_0}]$.
	The velocity field $v:=dj/d\rho$ satisfies that $v(T_t(x),t)=T(x)-x$ so that $T(x)-(x+\nabla\phi(T_t(x)))=(v(t,\cdot)-\nabla\phi)\circ T_t(x)$ holds for a.e.\ $x\in\Omega_0$.
	Hence, by definition of $\rho$ and interpretation of $\frac{1}{\rho}|j-\rho\nabla\phi|^2$ when $\rho=0$, we have
	\begin{align*}
		\int_0^1\int_{B_{1/4}}|T-(x+\nabla\phi\circ T_t)|^2\chi_{\Omega_0} &= \int_0^1\int_{T_t(B_{1/4}\cap\Omega_0)}|v-\nabla\phi|^2d\rho\\
		&= \int_0^1\int_{T_t(B_{1/4}\cap\Omega_0)}\frac{1}{\rho}|j-\rho\nabla\phi|^2\\
		&\stackrel{\eqref{ao009}}{\leq}\int_0^1\int_{B_{1/2}}\frac{1}{\rho}|j-\rho\nabla\phi|^2\\
		&\stackrel{\eqref{ao88}}{\leq}\varepsilon E+\frac{C}{\varepsilon}D.
	\end{align*}
	For the last term in (\ref{ao007}), we have
	\begin{align*}
		\int_0^1\int_{B_{1/4}}|\nabla\phi-\nabla\phi\circ T_t|^2\chi_{\Omega_0} \stackrel{\eqref{ao009}}{\lesssim} \sup_{B_{1/2}}|\nabla^2\phi|^2\int_0^1\int_{B_{1/4}}|T_t-x|^2\chi_{\Omega_0}.
	\end{align*}
	Recalling that $\sup_{B_{1/2}}|\nabla^2\phi|^2\lesssim E$ holds due to the mean-value property of harmonic functions and (\ref{ao81}), and using that $|T_t(x)-x|\leq|T(x)-x|$ so that
	\begin{equation*}
		\int_0^1\int_{B_{1/4}}|T_t-x|^2\chi_{\Omega_0}\leq\int_0^1\int_{B_{1/4}}|T-x|^2\chi_{\Omega_0}\lesssim E,
	\end{equation*}
	we find that the last term in (\ref{ao007}) is bounded of the form $\lesssim E^2$, thus being of higher-order for $E\ll_\varepsilon1$.
	Summarizing the above estimates, we have
	\begin{equation*}
		\int_{B_{1/4}}|T-(x+\nabla\phi)|^2\chi_{\Omega_0}\lesssim \varepsilon E+\frac{C}{\varepsilon}D.
	\end{equation*}
	Since $\varepsilon$ is arbitrary, we may replace $\lesssim$ by $\leq$ and thus obtain the desired bound.

	{\em Step 2: $\lambda\in[1/4,4]$.}
	Given a well-prepared map $T$ with an arbitrary $\lambda\in[1/4,4]$, we define a map $\widetilde{T}$ by $\widetilde{T}(x):=\lambda^{1/d}T(x)$ for $x\in\Omega_0$.
	Let $\widetilde{\Omega}_1:=\lambda^{1/d}\Omega_1$.
	Since $\widetilde{T}$ is still the gradient of a convex potential, and since $|\widetilde{\Omega}_1|=(\lambda^{1/d})^d|\Omega_1|=|\Omega_0|$, the map $\widetilde{T}$ is a well-prepared optimal transport map from $\chi_{\Omega_0}$ to $\chi_{\widetilde{\Omega}_1}$.
	In addition, the assumption $E+D\ll_\varepsilon 1$ for $T$ implies the same kind of smallness for $\widetilde{T}$; indeed, since $|\lambda-1|\ll1$, it is straightforward to check that (after a dyadic loss in the radius)
	\begin{equation}
		\widetilde{D}:=D(\Omega_0,\lambda^{1/d}\Omega_1,1/2)\lesssim D\ll_\varepsilon 1, \label{ao013}
	\end{equation}
	and also we have
	\begin{equation*}
		\widetilde{E}:=\fint_{B_{1/2}}|\widetilde{T}-x|^2\chi_{\Omega_0} \lesssim\int_{B_1\cap\Omega_0}(\lambda^{2/d}|T-x|^2+|\lambda^{1/d}-1|^2|x|^2) \lesssim  E+|\lambda-1|^2,
	\end{equation*}
	so that by Lemma \ref{lem:value},
	\begin{equation}
		\widetilde{E}\lesssim E\ll_\varepsilon 1. \label{ao012}
	\end{equation}
	Hence we deduce from Step 1 that for small $r\in(0,1/2)$ there is a harmonic gradient $\nabla\widetilde{\phi}$ on $\overline{B_r}$ that is symmetric, cf.\ (\ref{eqn:harmonicsymmetry}), and satisfies
	\begin{align}
		\int_{B_r}|\widetilde{T}-x-\nabla\widetilde{\phi}|^2\chi_{\Omega_0}dx &\le \varepsilon \widetilde{E}+\frac{C}{\varepsilon}\widetilde{D}, \label{ao010}\\
		\int_{B_r}|\nabla\widetilde{\phi}|^2 \leq C\widetilde{E}. \label{ao011}
	\end{align}
	Now we define $\phi(x):=\widetilde{\phi}(x)+(\lambda^{-1/d}-1)|x|^2/2$, the gradient of which $\nabla\phi(x)=\nabla\widetilde{\phi}(x)+(\lambda^{-1/d}-1)x$ is still harmonic and symmetric on $\overline{B_r}$.
	Then
	\begin{align*}
		\int_{B_r}|T-x-\nabla\phi|^2\chi_{\Omega_0} &= \int_{B_r\cap\Omega_0}|\lambda^{-1/d}\widetilde{T}-\lambda^{-1/d}x-\nabla\widetilde{\phi}|^2\\
		& \lesssim \lambda^{-2/d}\int_{B_r\cap\Omega_0}|\widetilde{T}-x-\nabla\widetilde{\phi}|^2+|\lambda^{-1/d}-1|^2\int_{B_r\cap\Omega_0}|\nabla\widetilde{\phi}|^2.
	\end{align*}
	By $\lambda\sim1$, (\ref{ao010}), (\ref{ao012}) and (\ref{ao013}), the first term is bounded as
	\begin{equation*}
		\lambda^{-2/d}\int_{B_r\cap\Omega_0}|\widetilde{T}-x-\nabla\widetilde{\phi}|^2\lesssim \varepsilon \widetilde{E}+\frac{C}{\varepsilon}\widetilde{D}\lesssim\varepsilon E+\frac{C}{\varepsilon}D,
	\end{equation*}
	while by Lemma \ref{lem:value} and (\ref{ao011}) the latter term is of higher order and in particular
	\begin{equation*}
		|\lambda^{-1/d}-1|^2\int_{B_r\cap\Omega_0}|\nabla\widetilde{\phi}|^2\lesssim E\widetilde{E} \stackrel{\eqref{ao012}}{\lesssim} \varepsilon E.
	\end{equation*}
	Therefore, by the arbitrariness of $\varepsilon$ we obtain (\ref{eqn:harmonicmainestimate}).
	Since (\ref{eqn:harmonicDirichlet}) follows from (\ref{ao011}) and (\ref{ao012}), and since the symmetry (\ref{eqn:harmonicsymmetry}) is already confirmed, the proof is now complete.
\end{proof}

\section{Proof of harmonic approximation}\label{sect:eulerian}

This section is devoted to the proof of Proposition \ref{Prharm}, i.e., the harmonic approximation on Eulerian level.
Throughout this section we give ourselves an arbitrary $\varepsilon\in(0,1]$, and assume all the hypotheses from (\ref{ao73}) to (\ref{ao98}).
In addition, we remark that in this section we will frequently use the notation $x=(x_1,x')=(x_1,x_2,\dots,x_d)\in\mathbb{R}^d$, in which $x_1$ does {\em not} mean the target point of $(x_0,x_1)\in{\rm supp}\pi$.

\subsection{Outline of the proof}\label{subsec:outline}

We mainly argue in a local region $(0,1)\times Q_R$, where the half-side length $R\in(1,2)$ is well chosen (in Lemma \ref{Legood}) so that all quantities on $\partial Q_R$ that we want to control behave in a generic way; below we drop $R$ for notational simplicity.

For Proposition \ref{Prharm} we will approximate the velocity $j/\rho$ by the gradient $\nabla\phi$ of the (symmetric) solution to a certain Poisson equation with a Neumann boundary condition, cf.\ \eqref{ao05}.
The first main step is Lemma \ref{Leorth}, which ensures an ``approximate orthogonality'' of the form
$$\int_{(0,1)\times Q}\frac{1}{\rho}|j-\rho\nabla\phi|^2\lesssim \int_{(0,1)\times Q}\frac{1}{\rho}|j|^2- \int|\nabla\phi|^2+ \text{small error},$$
so that our problem is reduced to estimating the (local) cost of $(\rho,j)$ directly.
Thanks to this orthogonality and also the local optimality of $(\rho,j)$ (Lemma \ref{lem:localoptimality}), it suffices to construct a suitable competitor the cost of which is comparable with the Dirichlet energy of $\phi$ up to small error.
This will be done in the other main step, Lemma \ref{Lecomp}, in which we construct a (local) variational competitor $(\tilde{\rho},\tilde{j})$ (concentrated on $(0,1)\times\overline{Q}$) based on the solution $\tilde{\phi}$ to a slightly modified Poisson equation, cf.\ \eqref{ao10}, such that
$$\int_{(0,1)\times Q}\frac{1}{\rho}|j|^2 \leq \int\frac{1}{\tilde{\rho}}|\tilde{j}|^2\lesssim \int|\nabla\tilde{\phi}|^2+ \text{small error},$$
and also ensure that the Dirichlet energies of the two solutions are comparable:
$$\int|\nabla\tilde{\phi}|^2 \lesssim \int|\nabla\phi|^2 + \text{small error}.$$
As all the above errors are of the desired form $\varepsilon E+\frac{1}{\varepsilon}D$, we reach the assertion.

We now sketch the idea to construct a competitor in Lemma \ref{Lecomp}.
To this end it is convenient to introduce the width, as in Section \ref{sect:Linftytheory},
\begin{align}\label{ao02}
\delta:=\max_{i=0,1}\|g_i\|_{L^\infty(Q_8')}\ll_\varepsilon 1,
\end{align}
where the smallness follows since
\begin{align}\label{ao87}
\delta \stackrel{\eqref{ao82}}{\le} \max_{i=0,1}4^\alpha[\nabla' g_i]_{\alpha,Q_{8}}\stackrel{\eqref{ao83}}{\lesssim}
D^\frac{1}{2} \ll_\varepsilon 1.
\end{align}
In particular, it follows from (\ref{ao17}) that $Q_{8}\cap\Omega_i$ $\subset\{x_1>-\delta\}$ for $i=0,1$ so that by (\ref{ao53}), we obtain the support property
\begin{align}\label{ao16}
((0,1)\times Q_2)\cap {\rm supp}(\rho,j)\;\subset\;\{x_1>-\delta\}.
\end{align}
When it comes to the construction of a competitor, next to the basic construction obtained in the interior theory \cite{GoldmanOtto} (``main construction'' $(\rho^\mathrm{main},j^\mathrm{main})$) that takes care of the flux through $(0,1)\times\partial Q$,
we need a new construction near the boundary $\partial\Omega_i$ (``boundary construction'' $(\rho^\mathrm{bdry},j^\mathrm{bdry})$), which turns out to be explicit.
However, when it comes to the (adaptation of the) interior case, the flux across $(0,1)\times(\partial Q\cap\{|x_1|<\delta\})$ has to be treated separately.
More precisely, we keep the trajectories that cross $(0,1)\times(\partial Q\cap\{|x_1|<\delta\})$ (``kept trajectories'' $(\rho^\mathrm{kept},j^\mathrm{kept})$), following a strategy from
\cite{GoldmanHuesmannOtto}.
Let $\rho_0\le \chi_{\Omega_0}$ denote the
density of the initial position of these kept trajectories
and $\rho_1\le \chi_{\Omega_1}$ the one of the terminal position.
These modifications in the initial and terminal conditions from $\chi_{\Omega_0}$ and $\chi_{\Omega_1}$ to $\chi_{\Omega_0}-\rho_0$ and
$\chi_{\Omega_1}-\rho_1$ (all restricted to $Q$) require a construction that will be accommodated by initial and terminal layers, that is, in $(0,\tau)\times Q$ and $(1-\tau,1)\times Q$ respectively, of a thickness $\tau\ll_\varepsilon 1$ (``initial and terminal construction'' $(\rho^\mathrm{ini},j^\mathrm{ini})$, $(\rho^\mathrm{term},j^\mathrm{term})$); accordingly, the main and boundary construction will be accommodated by the remaining (main) layer $(\tau,1-\tau)\times Q$.
As a collateral damage from introducing the initial and terminal layer, also the flux through $((0,\tau)\cup(1-\tau,1))\times \partial Q$ has to be treated separately.
For this, we have to distinguish between exiting and entering trajectories (positive and negative flux) through $((0,\tau)\cup(1-\tau,1))\times \partial Q$:
Trajectories ``exiting early'', i.e., through $(0,\tau)\times \partial Q$, and ``entering late'', i.e., through $(1-\tau,1)\times \partial Q$, are treated alongside those crossing $(\tau,1-\tau)\times(\partial Q\cap\{|x_1|<\delta\})$, that is, they are kept and contribute to $\rho_0$ and $\rho_1$ (and hence to $(\rho^\mathrm{kept},j^\mathrm{kept})$).
The flux coming from trajectories ``entering early'' through $(0,\tau)\times \partial Q$, and the flux coming from trajectories ``exiting late'' through $(1-\tau,1)\times \partial Q$ are not kept but will be treated as a singular measure supported in $(0,1)\times\partial Q$ (``singular construction'' $(\rho^\mathrm{sing},j^\mathrm{sing})$) at no further cost.
In fact, the corresponding entering particles will just stay put till time $\tau$, and then be released at uniform rate over $(\tau,1-\tau)$; the corresponding exiting ones will be treated in a parallel way.

In summary, we will eventually construct a local competitor $(\tilde{\rho},\tilde{j})$ of the form
\begin{equation}\label{ao103}
	\begin{split}
		(\tilde{\rho},\tilde{j}) &:= (\rho^\mathrm{kept},j^\mathrm{kept})+(\rho^\mathrm{main},j^\mathrm{main})+(\rho^\mathrm{sing},j^\mathrm{sing}) \\
		& \qquad +(\rho^\mathrm{bdry},j^\mathrm{bdry})+(\rho^\mathrm{ini},j^\mathrm{ini})+(\rho^\mathrm{term},j^\mathrm{term}),
	\end{split}
\end{equation}
where $(\tilde{\rho},\tilde{j})$ satisfies the same boundary condition as $(\rho,j)$ (see Figure \ref{fig:domain}), and then appeal to sub-additivity of the cost functional.
The precise definitions of these constructions are given in Section \ref{subsect:competitor} (see also Figures there).

\begin{figure}[htbp]
	\begin{center}
		\includegraphics[width=125mm]{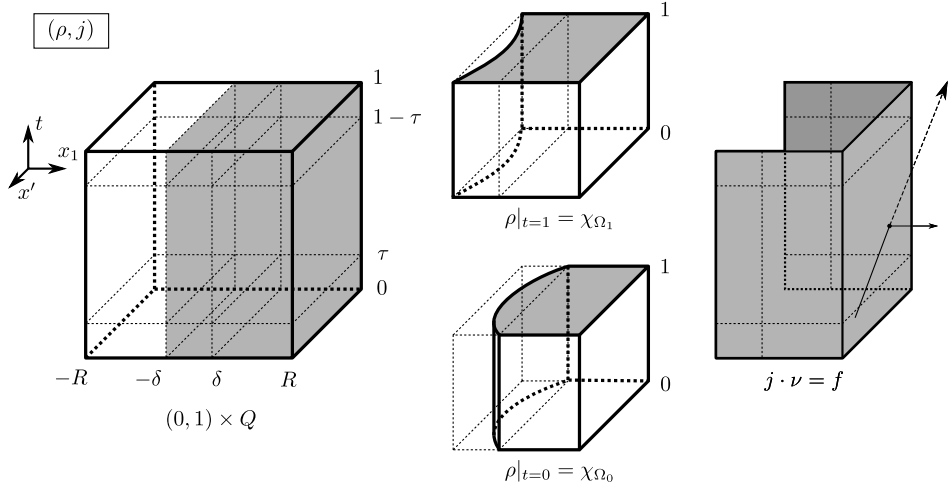}
		\caption{The Eulerian form $(\rho,j)$ of the optimal transport map in $(0,1)\times Q$.
		}
    \label{fig:domain}
	\end{center}
\end{figure}

\subsection{Preliminaries and key lemmas}\label{subsect:keylemmas}

In this section we make the above outline more rigorous by introducing fluxes and formulating the aforementioned key lemmas, and then demonstrate that Proposition \ref{Prharm} indeed follows from these lemmas.

For a given half-side length $R\in(1,2)$, we consider the subset of trajectories that
exit, and the one of those that enter $Q_R$:
\begin{equation}\label{ao67}
	\begin{split}
		{\mathcal T}_{+,R} &:= \{X \mid X(1)\not\in Q_R\ \mbox{and}\ X(t)\in Q_R\ \mbox{for some}\ t\in(0,1)\},\\
		{\mathcal T}_{-,R} &:= \{X \mid X(0)\not\in Q_R\ \mbox{and}\ X(t)\in Q_R\ \mbox{for some}\ t\in(0,1)\},
	\end{split}
\end{equation}
where here and hereafter $X$ denotes a straight trajectory (in $\mathbb{X}$).
Note that these sets are not necessarily disjoint.
For $X\in{\mathcal T}_{+,R}$ and $X\in{\mathcal T}_{-,R}$ we consider the exiting and
entering times, respectively,
\begin{equation}\label{ao71}
	\begin{split}
		t_{+,R}(X) &:= \sup\{t \in (0,1) \mid X(t)\in Q_R\} >0,\\
		t_{-,R}(X) &:= \inf\{t \in (0,1) \mid X(t)\in Q_R\} <1.
	\end{split}
\end{equation}
%
%
We now define
the normal flux $f_R$ across $\partial Q_R$, as a (signed) measure on
$(0,1)\times\partial Q_R$ that exists for every (and not just almost every) $R$,
by
\begin{equation}\label{ao37}
	\begin{split}
		& \int_{(0,1)\times\partial Q_R}\zeta df_R\\
		& :=\int_{{\mathcal T}_{+,R}}\zeta(t_{+,R},X(t_{+,R}))\mathbb{P}(dX)-\int_{{\mathcal T}_{-,R}}\zeta(t_{-,R},X(t_{-,R}))\mathbb{P}(dX),
	\end{split}
\end{equation}
where $\zeta\in C([0,1]\times\partial Q_R)$ is an arbitrary test function and, whenever it is not confusing, we write $t_{\pm,R}=t_{\pm,R}(X)$.
In fact, (\ref{ao37}) holds true for the positive and negative part of the normal flux separately:
\begin{align}\label{ao38}
\int_{(0,1)\times\partial Q_R}\zeta df_{R,\pm}
=\int_{{\mathcal T}_{\pm,R}}\zeta(t_{\pm,R},X(t_{\pm,R}))\mathbb{P}(dX).
\end{align}
In order to pass from (\ref{ao37}) to (\ref{ao38}), we need to show that the two measures on the r.h.s.~of (\ref{ao38}) are orthogonal (mutually singular).
Dropping the index $R$, this means that we have to show $(t_+(X_+),X_+(t_+(X_+)))$ $\not=(t_-(X_-),X_-(t_-(X_-)))$ for
$\mathbb{P}$-a.e.~$X_+\in{\mathcal T}_{+,R}$ and $X_-\in{\mathcal T}_{-,R}$.
In fact this holds for every $X_\pm\in{\mathcal T}_{\pm,R}$; we prove it by contradiction so suppose that $(t_+(X_+),X_+(t_+(X_+)))$ $=(t_-(X_-),X_-(t_-(X_-)))$ for some $X_\pm$; then, writing $X_\pm=tx_{1,\pm}+(1-t)x_{0,\pm}$, we would have $t(x_{1,+}-x_{1,-})$ $=-(1-t)(x_{0,+}-x_{0,-})$ for some $t\in(0,1)$;
by monotonicity of ${\rm supp}\pi$ in form of $(x_{1,+}-x_{1,-})\cdot(x_{0,+}-x_{0,-})$ $\ge 0$ this would imply $(x_{0,+},x_{1,+})=(x_{0,-},x_{1,-})$ so $X_+=X_-$;
however, this contradicts the assumption $t_+(X_+)=t_-(X_-)$ since $Q$ is open and hence $t_+(X)>t_-(X)$ holds for an arbitrary $X\in{\mathcal T}_{+,R}\cap{\mathcal T}_{-,R}$.

Using the flux $f_R$ defined above, we are now able to rigorously formulate the local optimality of $(\rho,j)$ playing a crucial role in our proof.

\begin{lemma}[Local optimality]\label{lem:localoptimality}
	For every $R\in(1,2)$ the measure $f_R$ defined by \eqref{ao37} coincides with the inner trace of $j$ on $(0,1)\times Q_R$ in the sense that for every $\zeta\in C^1_c([0,1]\times\mathbb{R}^d)$,
	\begin{equation}\label{ao100}
		\int_{(0,1)\times Q_R}\partial_t\zeta d\rho + \nabla\zeta\cdot dj = \int_{Q_R}(\zeta(1,\cdot)\chi_{\Omega_1} - \zeta(0,\cdot)\chi_{\Omega_0})dx + \int_{(0,1)\times\partial Q_R}\zeta df_R.
	\end{equation}
	In particular, if a pair of measures $(\tilde{\rho},\tilde{j})$ satisfies that for every $\zeta\in C^1_c([0,1]\times\mathbb{R}^d)$,
	\begin{equation}\label{ao101}
		\int_{(0,1)\times\mathbb{R}^d}\partial_t\zeta d\tilde{\rho} + \nabla\zeta\cdot d\tilde{j} = \int_{Q_R}(\zeta(1,\cdot)\chi_{\Omega_1} - \zeta(0,\cdot)\chi_{\Omega_0})dx + \int_{(0,1)\times\partial Q_R}\zeta df_R,
	\end{equation}
	then the following local optimality holds:
	\begin{equation}\label{ao102}
		\int_{(0,1)\times Q_R}\frac{1}{\rho}|j|^2 \leq \int_{(0,1)\times\mathbb{R}^d}\frac{1}{\tilde{\rho}}|\tilde{j}|^2,
	\end{equation}
	where the right-hand side is defined through the dual formula \eqref{ao96}, while the left-hand side is understood pointwise (by using the minimality of $(\rho,j)$).
\end{lemma}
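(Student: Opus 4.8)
The plan is to prove the two assertions of Lemma~\ref{lem:localoptimality} in sequence: first the flux-trace identity \eqref{ao100}, and then the local optimality \eqref{ao102}, which will be a soft consequence of the first together with the global minimality \eqref{ao36} of $(\rho,j)$.

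\textbf{Step 1: the flux identity \eqref{ao100}.} The idea is to decompose the trajectory ensemble $\mathbb{P}$ according to how each straight trajectory $X$ meets the open cube $Q_R$. Given $\zeta\in C^1_c([0,1]\times\mathbb{R}^d)$, I would write $\int_0^1\frac{d}{dt}\big(\zeta(t,X(t))\chi_{Q_R}(X(t))\big)\,dt$ for each $X$ and integrate against $\mathbb{P}(dX)$. For a trajectory that starts and ends inside $Q_R$ this reproduces $\zeta(1,X(1))-\zeta(0,X(0))$; for a trajectory in $\mathcal{T}_{+,R}$ (resp.\ $\mathcal{T}_{-,R}$) the indicator $\chi_{Q_R}(X(t))$ additionally jumps down at $t_{+,R}(X)$ (resp.\ up at $t_{-,R}(X)$), producing exactly the boundary terms in \eqref{ao37}. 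Here one uses that for each fixed $R\in(1,2)$ a straight segment meets $\partial Q_R$ in a finite set, and that the connected component of $\{t:X(t)\in Q_R\}$ containing any interior time is a single interval with endpoints $t_{\pm,R}$ — this is where openness of $Q_R$ and the already-established orthogonality of $f_{R,+}$ and $f_{R,-}$ enter. Feeding back the definitions \eqref{ao39} of $(\rho,j)$ as push-forwards of $\mathbb{P}$, the left-hand side of \eqref{ao100} equals $\int\big(\zeta(1,X(1))\chi_{Q_R}(X(1))-\zeta(0,X(0))\chi_{Q_R}(X(0))\big)\mathbb{P}(dX)$ plus the flux term, and by \eqref{ao41} together with $\rho_0=\chi_{\Omega_0}$, $\rho_1=\chi_{\Omega_1}$ this is precisely the right-hand side. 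A measure-theoretic care point is that $\chi_{Q_R}$ is not continuous, so one should either approximate or invoke that $\mathbb{P}$-a.e.\ trajectory spends zero time on $\partial Q_R$; choosing $R$ generically (as announced in Section~\ref{subsec:outline}, Lemma~\ref{Legood}) removes any remaining ambiguity, but in fact the stated ``every $R$'' version follows from the segment geometry.

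\textbf{Step 2: local optimality \eqref{ao102}.} Suppose $(\tilde\rho,\tilde j)$ satisfies \eqref{ao101}. Combining \eqref{ao101} with \eqref{ao100}, the pair
$$(\rho',j'):=\big(\rho\llcorner((0,1)\times Q_R^c),\ j\llcorner((0,1)\times Q_R^c)\big)+(\tilde\rho,\tilde j)$$
— i.e.\ replace $(\rho,j)$ by $(\tilde\rho,\tilde j)$ inside $(0,1)\times Q_R$ and keep it outside — satisfies the global continuity equation \eqref{ao97} with the same initial/terminal data $\chi_{\Omega_0},\chi_{\Omega_1}$, because the two interface flux contributions $\int\zeta\,df_R$ cancel. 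By the global minimality \eqref{ao36} of $(\rho,j)$ and sub-additivity of the Benamou--Brenier functional \eqref{ao96} over disjoint regions, one gets
$$\int_{(0,1)\times Q_R}\tfrac{1}{\rho}|j|^2+\int_{(0,1)\times Q_R^c}\tfrac{1}{\rho}|j|^2=\int\tfrac1\rho|j|^2\le\int\tfrac1{\rho'}|j'|^2\le\int_{(0,1)\times\mathbb{R}^d}\tfrac1{\tilde\rho}|\tilde j|^2+\int_{(0,1)\times Q_R^c}\tfrac1\rho|j|^2,$$
and cancelling the finite exterior term (finite because $\int\frac1\rho|j|^2<\infty$, the minimizer having finite energy) yields \eqref{ao102}. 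One subtlety worth flagging: for this cancellation the exterior energy of $(\rho,j)$ must be finite, which holds since $(\rho,j)$ is the \emph{global} minimizer with bounded data; and the pointwise interpretation of the left-hand side of \eqref{ao102} is legitimate because $\rho\le 1$ and $\rho_t\ll\mathcal{L}^d$ as recorded around \eqref{ao54}.

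\textbf{Main obstacle.} The genuinely delicate point is Step~1: making rigorous the ``jump of the indicator along a straight segment'' bookkeeping, i.e.\ justifying that for $\mathbb{P}$-a.e.\ $X$ the set $\{t\in(0,1):X(t)\in Q_R\}$ is a finite union of relatively open intervals whose left endpoints (other than $0$) all coincide with $t_{-,R}$-type entrance times and right endpoints with $t_{+,R}$-type exit times, and that the contribution of each such interval telescopes correctly when tested against $\zeta$. The convexity of $Q_R$ actually makes $\{t:X(t)\in Q_R\}$ a single interval for each $X$, which is what is implicitly used in defining $t_{\pm,R}$ as a single sup/inf; I would make this explicit and then the rest of Step~1 is a clean application of the fundamental theorem of calculus plus the change-of-variables \eqref{ao40}--\eqref{ao41}. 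Everything in Step~2 is then formal manipulation of \eqref{ao36}, \eqref{ao96}, and \eqref{ao97}.
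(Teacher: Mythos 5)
Your proof is correct and follows essentially the same approach as the paper: the flux identity \eqref{ao100} is obtained by passing via \eqref{ao39} to a trajectory integral, applying the fundamental theorem of calculus along each straight segment (where convexity of $Q_R$ guarantees a single entry/exit time), and matching terminal contributions against \eqref{ao41} and the flux definition \eqref{ao37}; the local optimality \eqref{ao102} then follows by patching $(\tilde\rho,\tilde j)$ with $(\rho,j)$ outside $\overline{Q_R}$, invoking the global minimality \eqref{ao36} and sub-additivity of the dual functional \eqref{ao96}, and cancelling the finite exterior energy. One small correction: the orthogonality of $f_{R,+}$ and $f_{R,-}$ is used to justify \eqref{ao38}, not \eqref{ao100}; the single-interval structure of $\{t:X(t)\in Q_R\}$ comes purely from convexity of $Q_R$, as you later note.
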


We proceed with the preparation of our construction.
The trajectories we keep are those that cross $(0,1)\times\partial Q$ in $\{x_1<\delta\}$
or that exit before $\tau$, or enter after $1-\tau$:
\begin{align}
{\mathcal T}'_R &:= {\mathcal T}_{+,R}'\cup{\mathcal T}_{-,R}',\label{ao66}\\
&\mbox{where} \quad
\begin{cases}
	{\mathcal T}_{+,R}':=
	\{X\in{\mathcal T}_{+,R} \mid X_1(t_{+,R})<\delta \ \mbox{or} \ t_{+,R}<\tau\},\\
	{\mathcal T}_{-,R}':=
	\{X\in{\mathcal T}_{-,R} \mid X_1(t_{-,R})<\delta \ \mbox{or} \ t_{-,R}>1-\tau\}.
\end{cases}\nonumber
\end{align}
If we define measures $\rho'_R$ and $j'_R$ on $(0,1)\times\mathbb{R}^d$ analogously to (\ref{ao39}) (used later for defining ``kept trajectories'', cf.\ Figure \ref{fig:kept}), namely for $\zeta\in C(\mathbb{R}^d)$ and $\xi\in C(\mathbb{R}^d;\mathbb{R}^d)$,
\begin{equation}\label{ao114}
	\begin{split}
		\int\zeta(x)(\rho'_R)_t(dx) & :=  \int_{{\mathcal T}'_R}\zeta(X(t))\mathbb{P}(dX),\\
		\int\xi(x)\cdot (j'_R)_t(dx) & :=  \int_{{\mathcal T}'_R}\xi(X(t))\cdot\dot X(t)\mathbb{P}(dX),
	\end{split}
\end{equation}
then (analogously to \eqref{ao100}) $(\rho'_R,j'_R)=(dt(\rho'_R)_t(dx),dt(j'_R)_t(dx))$ distributionally satisfies the continuity equation in $(0,1)\times Q_R$ with initial condition $\rho_{0,R}$ at $t=0$, terminal condition $\rho_{1,R}$ at $t=1$, and normal flux $f'_R$ across $(0,1)\times\partial Q_R$ (as in (\ref{ao100})), where $\rho_{0,R}$ and $\rho_{1,R}$ are defined in line with (\ref{ao41}), that is, for $\zeta\in C_c(\mathbb{R}^d)$,
\begin{equation}\label{ao45}
	\begin{split}
		\int\zeta\rho_{0,R}&:=\int_{{\mathcal T}_{+,R}'}\zeta(X(0))\mathbb{P}(dX),\\
		\int\zeta\rho_{1,R}&:=\int_{{\mathcal T}_{-,R}'}\zeta(X(1))\mathbb{P}(dX),
	\end{split}
\end{equation}
and $f'_R$ is defined analogously to (\ref{ao37}), that is, for $\zeta\in C([0,1]\times\partial Q_R)$,
\begin{align}\label{ao55}
\lefteqn{\int_{(0,1)\times\partial Q_R}\zeta df_R'}\nonumber\\
&:=\int_{{\mathcal T}_{+,R}'}\zeta(t_{+,R},X(t_{+,R}))\mathbb{P}(dX)
-\int_{{\mathcal T}_{-,R}'}\zeta(t_{-,R},X(t_{-,R}))\mathbb{P}(dX).
\end{align}
Hence the remaining construction has to connect the initial condition $\chi_{\Omega_0}-\rho_{0,R}$ to the terminal condition
$\chi_{\Omega_0}-\rho_{0,R}$, with the normal flux $f_R-f'_R$.
What we have gained by discarding a (small) portion of the trajectories 
is:
\begin{equation}\label{ao51}
	\begin{split}
		f_R-f'_R = 0\quad \mbox{on}\ \{x_1<\delta\}, \qquad f_R-f'_R
		\begin{cases}
			\le 0 & \mbox{on}\ \{t<  \tau\},\\
			\ge 0 & \mbox{on}\ \{t>1-\tau\}.
		\end{cases}
	\end{split}
\end{equation}
As mentioned above, we split this remaining construction into a construction in $(\tau,1-\tau)\times Q_R$,
into an ``initial construction'' in $(0,\tau)\times Q_R$,
and into a ``terminal construction'' in $(1-\tau,1)\times Q_R$.
The initial and terminal constructions require more precise information on $\rho_{0,R}$
and $\rho_{1,R}$.
To this purpose, we consider the subsets of those trajectories that exit early
or enter late:
\begin{equation}\label{ao70}
	\begin{split}
		{\mathcal T}_{+,R}'' &:= \{X\in{\mathcal T}_{+,R} \mid t_{+,R}<\tau\}\subset{\mathcal T}_{+,R}',\\
		{\mathcal T}_{-,R}'' &:= \{X\in{\mathcal T}_{-,R} \mid t_{-,R}>1-\tau\}\subset{\mathcal T}_{-,R}',
	\end{split}
\end{equation}
and the corresponding densities: for $\zeta\in C_c(\mathbb{R}^d)$,
\begin{equation}\label{ao64}
	\begin{split}
		\int\zeta\rho_{0,R}'&:=\int_{{\mathcal T}_{+,R}''}\zeta(X(0))\mathbb{P}(dX),\\
		\int\zeta\rho_{1,R}'&:=\int_{{\mathcal T}_{-,R}''}\zeta(X(1))\mathbb{P}(dX).
	\end{split}
\end{equation}
We finally note that the measures $f_R$ and $f_R'$ have Hausdorff densities on $(0,1)\times\partial Q_R$ for a.e.\ $R\in(1,2)$.
It follows from Lemma \ref{lem:localoptimality} that for every $R\in(1,2)$, the inner normal trace of $j$ (as a measure) exists and coincides with $f_R$.
Since in addition $j$ has a (square integrable) Lebesgue density, which we denote again by $j$, the Hausdorff density of this inner trace coincides with $\nu\cdot j$ for a.e.\ $R\in(1,2)$; in fact, this is true for every Lebesgue point $R$ of $j$, seen as an element of $L^2((1,2);L^2((0,1)\times\partial Q_R))$.
As a consequence, for a.e.\ $R\in(1,2)$, $\nu\cdot j$ is the Hausdorff density of $f_R$.
In addition, the square integrability (of the density) of $f_R$ obviously transmits to $f_R^{\pm}$. 
We then learn from \eqref{ao38} that this transmits to the two r.h.s.\ expressions.
Since these clearly dominate the respective r.h.s.\ expressions in \eqref{ao45}, also $f_R'$ is square integrable.

We are now in a position to choose a good half-side length $R$ (slice) such that
the quantities introduced above are well estimated, that is, behave like on average (with respect to $R$).
The proof is given in Section \ref{subsect:goodslices}.
\begin{lemma}[Good slices]\label{Legood}
For any fixed $\tau\in(0,1)$, there exists $R\in(1,2)$ such that
\begin{align}
\int_{(0,1)\times\partial Q_R}f_R^2&\lesssim E,\label{ao43}\\
\int_{(0,1)\times\mathbb{R}^d}\frac{1}{\rho'_R}|j'_R|^2&\lesssim ME,\label{ao42}\\
\rho_{0,R},\rho_{1,R}\le 1,\quad
\int_{\mathbb{R}^d}{\rm dist}(\cdot,\partial Q_R)(\rho_{0,R}+\rho_{1,R})&\lesssim E\label{ao44},\\
\rho_{0,R}',\rho_{1,R}'\le 1,\quad
\int_{\mathbb{R}^d}{\rm dist}(\cdot,\partial Q_R)(\rho_{0,R}'+\rho_{1,R}')&\lesssim \tau^2 E\label{ao46}.
\end{align}
\end{lemma}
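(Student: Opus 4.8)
The plan is to reduce Lemma~\ref{Legood} to four averaged-in-$R$ estimates and then extract a single good slice by Markov's inequality. The pointwise bounds $\rho_{0,R},\rho_{1,R}\le1$ and $\rho'_{0,R},\rho'_{1,R}\le1$ require no choice of $R$ at all: by \eqref{ao45} and \eqref{ao64} these measures are obtained by restricting the trajectory measure $\mathbb{P}$ to a subset of trajectories and recording the initial (resp.\ terminal) position, hence they are dominated by $\chi_{\Omega_0}\le1$ (resp.\ $\chi_{\Omega_1}\le1$). Thus only the four genuinely integral quantities in \eqref{ao43}, \eqref{ao42}, \eqref{ao44}, \eqref{ao46} remain, and I would first bound their averages over $R\in(1,2)$ by $\lesssim E$, $\lesssim ME$, $\lesssim E$ and $\lesssim\tau^2E$, respectively.

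Two elementary facts about straight trajectories do most of the work. First, any trajectory $X$ meeting $Q_2$ is \emph{short}: by \eqref{ao72} and \eqref{ao53}, $(X(0),X(1))\in(Q_4\times\mathbb{R}^d)\cup(\mathbb{R}^d\times Q_4)$ and $|X(1)-X(0)|\le M$; this applies in particular to every $X\in\mathcal{T}_{\pm,R}$, $R\in(1,2)$, and together with \eqref{ao73} it shows that $\int|X(1)-X(0)|^2\,\mathbb{P}(dX)$ over any such set of trajectories is $\lesssim E'\lesssim E$. Second, since $t\mapsto\|X(t)\|_\infty$ is convex, the set $\{t\in[0,1]:X(t)\in Q_R\}$ is an interval, so $X\in\mathcal{T}_{+,R}$ precisely when $\min_{t\in[0,1]}\|X(t)\|_\infty<R\le\|X(1)\|_\infty$, which yields the per-trajectory slice bound
\begin{align*}
	\bigl|\{R\in(1,2):X\in\mathcal{T}_{+,R}\}\bigr|\le\|X(1)\|_\infty-\min_{t\in[0,1]}\|X(t)\|_\infty\le|X(1)-X(0)|,
\end{align*}
and analogously for $\mathcal{T}_{-,R}$; repeating the argument on the shorter interval $[0,\tau]$ (on which $X(\tau)\notin Q_R$ and $X(t)\in Q_R$ for some earlier $t$ once $X\in\mathcal{T}''_{+,R}$) gives the refinement $|\{R\in(1,2):X\in\mathcal{T}''_{\pm,R}\}|\le\tau|X(1)-X(0)|$.

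With these in hand, \eqref{ao42}, \eqref{ao44} and \eqref{ao46} should follow by Fubini. For \eqref{ao42} I would bound the Eulerian energy of the kept pair $(\rho'_R,j'_R)$ of \eqref{ao114} by the path energy, $\int\frac{1}{\rho'_R}|j'_R|^2\le\int_{\mathcal{T}'_R}|X(1)-X(0)|^2\,\mathbb{P}(dX)$, which follows from the dual formula \eqref{ao96} and $2a\cdot b-|b|^2\le|a|^2$; then $\int_1^2\!\int\frac{1}{\rho'_R}|j'_R|^2\,dR\le\int|X(1)-X(0)|^2\,|\{R:X\in\mathcal{T}'_R\}|\,\mathbb{P}(dX)\lesssim M\!\int|X(1)-X(0)|^2\,\mathbb{P}(dX)\lesssim ME$. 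For \eqref{ao44} and \eqref{ao46} I would use ${\rm dist}(X(0),\partial Q_R)\le|X(0)-X(t_{+,R})|\le|X(1)-X(0)|$ (and, when $X\in\mathcal{T}''_{+,R}$, the sharper $\le\tau|X(1)-X(0)|$ since then $t_{+,R}<\tau$), together with the symmetric bounds for $\rho_{1,R},\rho'_{1,R}$ in terms of $X(1)$ and $t_{-,R}$; combined with the slice-measure bounds above, the $R$-averages become $\lesssim\int|X(1)-X(0)|^2\,\mathbb{P}(dX)\lesssim E$ and $\lesssim\tau^2\!\int|X(1)-X(0)|^2\,\mathbb{P}(dX)\lesssim\tau^2E$. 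For \eqref{ao43} I would instead invoke the slicing structure recorded before the statement: for a.e.\ $R$ the flux $f_R$ has Hausdorff density $\nu\cdot j$ on $(0,1)\times\partial Q_R$, and the coarea formula for the $1$-Lipschitz function $x\mapsto\|x\|_\infty$ gives
\begin{align*}
	\int_1^2\Bigl(\int_{(0,1)\times\partial Q_R}f_R^2\Bigr)dR=\int_{(0,1)\times(Q_2\setminus\overline{Q_1})}|\nu\cdot j|^2\le\int_{(0,1)\times Q_2}|j|^2\le\int_{(0,1)\times Q_2}\frac{1}{\rho}|j|^2,
\end{align*}
where the last step uses $\rho\le1$ from \eqref{ao54}; the trajectory representation \eqref{ao39} together with \eqref{ao72} and \eqref{ao73} then bounds the right-hand side by $\int_{(Q_8\times\mathbb{R}^d)\cup(\mathbb{R}^d\times Q_8)}|x_1-x_0|^2\,\pi=E'\lesssim E$.

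Finally I would pick the slice. After discarding the measure-zero set of $R$ for which the density identifications above or the square integrability of $f_R,f'_R$ fail, Markov's inequality shows that, for a sufficiently large universal constant, each of the four quantities violates the corresponding bound in \eqref{ao43}--\eqref{ao46} only on a set of $R$ of measure $<1/8$; since $|(1,2)|=1>4\cdot\tfrac18$, some $R\in(1,2)$ is good for all four, the $\tau$-dependence entering solely through \eqref{ao46} as claimed. The one genuinely substantive step is the family of per-trajectory slice-measure bounds, and in particular the extra factor $\tau$ available on $\mathcal{T}''_{\pm,R}$, which is exactly what produces the $\tau^2E$ gain in \eqref{ao46}; everything else is bookkeeping among the trajectory subsets $\mathcal{T}_{\pm,R}$, $\mathcal{T}'_{\pm,R}$, $\mathcal{T}''_{\pm,R}$ introduced above.
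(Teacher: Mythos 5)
Your proposal is correct and follows the same overall strategy as the paper's proof: bound the $R$-averages over $(1,2)$ of each of the four integral quantities by $\lesssim E$, $\lesssim ME$, $\lesssim E$, $\lesssim \tau^2E$ respectively, note that the pointwise bounds $\rho_{0,R},\rho_{1,R},\rho'_{0,R},\rho'_{1,R}\le1$ hold for all $R$ by domination by $\chi_{\Omega_0},\chi_{\Omega_1}$, and then select a single good slice by Markov's inequality. The mechanisms are also the same: for \eqref{ao43}, coarea and $f_R=\nu\cdot j$ with $\rho\le1$; for the rest, passing to the trajectory picture, using the sub-additivity/dual estimate $\int\frac{1}{\rho'_R}|j'_R|^2\le\int_{\mathcal{T}'_R}|X(1)-X(0)|^2\,\mathbb{P}$, and a per-trajectory slice-measure bound combined with the local energy bound $\int_{\{X:\,\exists t,\,X(t)\in Q_2\}}|X(1)-X(0)|^2\mathbb{P}\lesssim E$ via \eqref{ao72}, \eqref{ao73}, \eqref{ao53}. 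The only differences are cosmetic: you phrase the slice-measure bounds via the convexity of $t\mapsto\|X(t)\|_\infty$ and the pointwise distance bound via $|X(0)-X(t_{+,R})|$, while the paper writes these through the indicator $I_{X,R}$ and the quantity $|R-|X(0)|_\infty|$; and you derive \eqref{ao44} directly rather than noting it is a special case of the \eqref{ao46} argument. Both are equivalent.
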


We remark that in order to get the coefficient $\tau^2$ in (\ref{ao46}) we will use the fact that the initial points (resp.\ the target points) of the trajectories exiting early (resp.\ entering late) are ``close'' to the boundary $\partial Q_R$.
This might not be true for the trajectories exiting late and entering early; this is why we deal with those trajectories separately.

From now on, we drop the index $R$. In fact, for (notational) simplicity, we treat $R$
as being unity $R=1$.


We now turn to our main estimates.
The Poisson equation we use for approximating the velocity $j/\rho$ is:
\begin{align}\label{ao05}
	\begin{cases}
		\Delta \phi=c & \mbox{in}\ Q\cap\{x_1>0\},\\
		\nu\cdot\nabla\phi = \bar f & \mbox{on}\ \partial Q\cap\{x_1>0\},\\
		\nu\cdot\nabla\phi=0 & \mbox{on}\ Q\cap\{x_1=0\},
	\end{cases}
\end{align}
where we have introduced the following abbreviation of the boundary flux, cf.~(\ref{ao37}) and (\ref{ao55}):
\begin{equation}\label{ao47}
	\bar f := \int_0^1(f-f')dt \quad \mbox{on}\ \partial Q,
\end{equation}
and $c$ is the constant that makes the problem solvable; throughout this section, $\nu$ denotes the outer normal of a domain under consideration.
Because $\partial_1\phi$ vanishes for $x_1=0$, we may extend $\nabla\phi$ harmonically onto $Q$ by reflection, cf.\ \eqref{ao80}.

The next lemma shows that in a certain sense,
$j-\rho\nabla\phi$ and $\nabla\phi$ are almost orthogonal, cf.~(\ref{ao11}).
The proof is given in Section \ref{subsect:orthogonality}.

\begin{lemma}[Approximate orthogonality]\label{Leorth}
Given $\tau\in(0,1/4)$, we have
\begin{equation}\label{ao11}
	\begin{split}
		\int_{(0,1)\times Q}\frac{1}{\rho}|j-\rho\nabla\phi|^2
		&-\left(\int_{(0,1)\times Q}\frac{1}{\rho}|j|^2
		-\int_{Q\cap\{x_1>-\delta\}}|\nabla\phi|^2\right)\\
		&\lesssim E^\frac{1}{2}\delta+(\tau+\delta)^\frac{1}{2}E,
	\end{split}
\end{equation}
and also
\begin{align}
\int_{Q}|\nabla\phi|^2  \lesssim E.\label{ao76}
\end{align}
\end{lemma}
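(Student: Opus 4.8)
\emph{Plan.} Both \eqref{ao76} and \eqref{ao11} will be read off a single integration by parts, in the spirit of the interior theory. Expand the integrand pointwise,
\[
\frac{1}{\rho}\,|j-\rho\nabla\phi|^2=\frac{1}{\rho}\,|j|^2-2\,\nabla\phi\cdot j+\rho\,|\nabla\phi|^2 ,
\]
(reading the right-hand side as $|j|$ when $\rho=0$), and integrate over $(0,1)\times Q$. The last term is $\int_Q|\nabla\phi|^2\,d\bar\rho$ with $\bar\rho:=\int_0^1\rho_t\,dt\le 1$, whose part inside $Q$ is carried by $\{x_1>-\delta\}$, cf.\ \eqref{ao16}. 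For the cross term, test the continuity equation for $(\rho,j)$ on $(0,1)\times Q$, in the form \eqref{ao100}, against the time-independent function $\phi$; since $\partial_t\phi=0$,
\[
\int_{(0,1)\times Q}\nabla\phi\cdot dj=\int_Q\phi\,(\chi_{\Omega_1}-\chi_{\Omega_0})\,dx+\int_{\partial Q}\phi\,dF,\qquad F:=\int_0^1 f_t\,dt .
\]
Split $F=\bar f+F'$ with $F':=\int_0^1 f'_t\,dt$, using \eqref{ao47}. Because $\bar f$ is carried by $\{x_1\ge\delta\}\subset\{x_1>0\}$, cf.\ \eqref{ao51}, and is there the Neumann datum of $\phi$ in \eqref{ao05}, an integration by parts over $Q\cap\{x_1>0\}$ — using $\Delta\phi=c$, the homogeneous Neumann condition on $Q\cap\{x_1=0\}$, and the normalisation $\int_{Q\cap\{x_1>0\}}\phi=0$ — rewrites $\int_{\partial Q}\phi\,d\bar f$ as $\int_{Q\cap\{x_1>0\}}|\nabla\phi|^2$. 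The same identity gives \eqref{ao76}: $\int_{Q\cap\{x_1>0\}}|\nabla\phi|^2=\int_{\partial Q}\phi\,d\bar f\le\|\phi\|_{L^2(\partial Q)}\,\|\bar f\|_{L^2(\partial Q)}$, where $\|\bar f\|_{L^2(\partial Q)}\lesssim\|f\|_{L^2((0,1)\times\partial Q)}+\|f'\|_{L^2((0,1)\times\partial Q)}\lesssim E^{1/2}$ by \eqref{ao43} and the square integrability of $f'$ recorded before Lemma \ref{Legood}, and $\|\phi\|_{L^2(\partial Q)}\lesssim\|\nabla\phi\|_{L^2(Q)}$ by trace and Poincar\'e, so that the $\|\nabla\phi\|_{L^2}^2$ on the left is absorbed.

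Feeding this back, and using the $x_1$-symmetry of $|\nabla\phi|^2$ to write $\int_{Q\cap\{x_1>-\delta\}}|\nabla\phi|^2=\int_{Q\cap\{x_1>0\}}|\nabla\phi|^2+\int_{Q\cap\{0<x_1<\delta\}}|\nabla\phi|^2$, the left-hand side of \eqref{ao11} reduces to $\mathrm{III}-2\mathrm{I}-2\mathrm{II}$, with
\[
\mathrm{I}:=\int_Q\phi\,(\chi_{\Omega_1}-\chi_{\Omega_0}),\qquad
\mathrm{II}:=\int_{\partial Q}\phi\,dF',\qquad
\mathrm{III}:=\int_Q|\nabla\phi|^2\,(\bar\rho-\chi_{\{x_1\ge\delta\}}) ,
\]
and it remains to bound $|\mathrm I|+|\mathrm{II}|+|\mathrm{III}|\lesssim E^{1/2}\delta+(\tau+\delta)^{1/2}E$. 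The term $\mathrm I$ is the mildest: $\Omega_0\triangle\Omega_1$ is contained in $\{(x_1,x')\in Q:\min_i g_i(x')<x_1<\max_i g_i(x')\}$, so by Fubini $\mathrm I=\int_{Q'}(g_1-g_0)(x')\,\phi(0,x')\,dx'$ plus a remainder of size $\lesssim\delta\,|Q\cap\{0<x_1<\delta\}|^{1/2}\,\|\partial_1\phi\|_{L^2(Q)}\lesssim\delta^{3/2}E^{1/2}$; since $\|g_1-g_0\|_{L^\infty(Q')}\lesssim\delta$ by \eqref{ao87} and $\|\phi(0,\cdot)\|_{L^1(Q')}\lesssim\|\nabla\phi\|_{L^2(Q)}\lesssim E^{1/2}$ by trace, Poincar\'e and \eqref{ao76}, this yields $|\mathrm I|\lesssim\delta E^{1/2}$.

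The terms $\mathrm{II}$ and $\mathrm{III}$ are estimated by exploiting that $F'$ and the deficit $\bar\rho-\chi_{\{x_1\ge\delta\}}$ are concentrated in thin sets. By \eqref{ao51} and \eqref{ao66}, $F'$ is carried by $\partial Q\cap\{x_1<\delta\}$ and by the contributions of trajectories exiting before time $\tau$ or entering after $1-\tau$, whose endpoints are, by \eqref{ao46}, concentrated near $\partial Q$ with the improved weight $\tau^2E$; and $\bar\rho$ equals $1$ throughout $Q\cap\{x_1\ge\delta\}$ except within a width $O(M)$, $M\lesssim E^{1/(d+2)}+D^{1/2}$, of $\partial(Q\cap\{x_1\ge\delta\})$ because every trajectory moves by at most $M$, cf.\ \eqref{ao53}, while $0\le\bar\rho\le1$ on $\{-\delta<x_1<\delta\}$. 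One then splits $\mathrm{II}$ and $\mathrm{III}$ into an interior part — on which $\|\nabla\phi\|_{L^\infty}\lesssim E^{1/2}$ by the mean-value property of the harmonic field $\nabla\phi$ — and a part adjacent to $\partial Q$, controlled through the good-slice bounds \eqref{ao43}, \eqref{ao42}, \eqref{ao44} together with the decay $|\partial_1\phi|\lesssim|x_1|\,\|\nabla^2\phi\|$ near $\{x_1=0\}$; Cauchy--Schwarz and Young's inequality trade an $E^{1/2}$ against the small widths $\tau,\delta$ and produce $|\mathrm{II}|+|\mathrm{III}|\lesssim(\tau+\delta)^{1/2}E$ (cruder $\delta E$--contributions being harmless since $\delta\le(\tau+\delta)^{1/2}$). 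Combining, and recalling $\delta\lesssim D^{1/2}$ by \eqref{ao87}, gives \eqref{ao11}; downstream $\tau$ will be chosen small relative to $\varepsilon$ so the error does not exceed $\varepsilon E+\frac{C}{\varepsilon}D$.

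The clean part is the algebraic reduction above; the main obstacle is precisely the control of $\mathrm{II}$ and $\mathrm{III}$ near $\partial Q$, where one must simultaneously use the good choice of slice $R\in(1,2)$ (Lemma \ref{Legood}) — so that $f$, $f'$ and the boundary behaviour of $\nabla\phi$ are all of average size there — together with sufficient elliptic regularity of the Poisson solution $\phi$ up to $\partial Q$, in order to ensure that neither the mass that transiently escapes $Q$ nor the flux carried by the kept trajectories contributes more than $(\tau+\delta)^{1/2}E$. A minor point: since $\bar f$ is only $L^2$, testing \eqref{ao100} with $\phi$ and the integration by parts in \eqref{ao05} are justified by a routine mollification of $\bar f$ (equivalently, by density of $C^1(\overline Q)$ in $H^1(Q)$, against which both the measure $(\rho,j)$ and the $L^2$-flux act continuously).
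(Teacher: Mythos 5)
Your overall architecture coincides with the paper's: expand the square, test the time-averaged continuity equation against the time-independent $\phi$, integrate by parts through the Poisson equation \eqref{ao05}, and reduce the deficit to a handful of terms supported on thin sets near $\{|x_1|\lesssim\delta\}$ and $\partial Q$, which are then traded for small factors $\delta^{1/2}, \tau^{1/2}$. Your algebraic bookkeeping is correct (the identity $\text{LHS of \eqref{ao11}} = -2\mathrm I - 2\mathrm{II} + \mathrm{III}$ checks out, once one uses the reflection symmetry of $|\nabla\phi|^2$), and your variant — integrating over $Q\cap\{x_1>0\}$ rather than $Q\cap\{x_1>-\delta\}$, thereby trading the paper's boundary identity \eqref{ao89} and the term $T_4,T_5$ for the single term $\mathrm{III}$ — is a legitimate reorganization; your bound $|\mathrm I|\lesssim\delta E^{1/2}$ is also fine (interior-hyperplane trace plus Poincar\'e suffice there, avoiding maximal regularity).

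The genuine gap is in the estimates for $\mathrm{II}$ and $\mathrm{III}$, where you write that ``Cauchy--Schwarz and Young's inequality trade an $E^{1/2}$ against the small widths $\tau,\delta$.'' That trade requires knowing that the restriction of $\phi$ (or $\nabla\phi$) to a thin slab $\{|x_1-c|<h\}\cap \overline Q$ or a thin boundary strip has squared $L^2$-mass $\lesssim h\cdot E$, i.e.\ the slab-uniform bound
$$\sup_{x_1}\int_{Q'}|\nabla\phi(x_1,\cdot)|^2\lesssim\int_{\partial Q}\bar f^2\lesssim E$$
and its consequence \eqref{ao07}. This is exactly the $L^2$-maximal regularity estimate \eqref{ao06}, and it does \emph{not} follow from trace + Poincar\'e applied to $\int_Q|\nabla\phi|^2\lesssim E$ (a harmonic function can concentrate its Dirichlet energy on a single slab: consider $\mathrm{Re}\,e^{n(x_1+ix_2)}$), nor from the interior mean-value bound $\|\nabla\phi\|_{L^\infty}\lesssim E^{1/2}$ (which fails up to $\partial Q$, where the corners of the Neumann problem live). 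The estimate relies crucially on the control of the Neumann datum $\bar f$ in $L^2(\partial Q)$, and its proof (Remark \ref{rem:L2maximalregularity}) is a non-trivial Fourier/weighted-energy argument specific to harmonic functions with $L^2$-Neumann data. Concretely, without \eqref{ao06}--\eqref{ao07}, the positive part of your $\mathrm{III}$, namely $\int_{Q\cap\{-\delta<x_1<\delta\}}\bar\rho\,|\nabla\phi|^2\le\int_{Q\cap\{-\delta<x_1<\delta\}}|\nabla\phi|^2$, is only bounded by $E$ instead of $\delta E$, which is fatal since the one-step improvement cannot tolerate an $O(1)\cdot E$ error. The same issue bites your $\mathrm{II}$: the piece on $\partial Q\cap\{|x_1|<\delta\}$ requires $\int_{\partial Q\cap\{|x_1|<\delta\}}\phi^2\lesssim\delta E$, and the piece carried by early/late crossings needs $\sup_{x_1}\int_{\partial Q'}\phi^2\lesssim E$ to pair with $\int(\int_0^1 f'\,dt)^2\lesssim\tau E$ on $\{x_1>\delta\}$. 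A secondary, less serious point: your claim that $\bar\rho\equiv1$ on $\{x_1\ge\delta\}$ away from a width-$M$ collar of $\partial(Q\cap\{x_1\ge\delta\})$ is not justified (McCann gives $\rho_t\le1$, not $\rho_t=1$, and $\det\nabla T_t$ can exceed $1$ in the interior even if $\det\nabla T\equiv1$); fortunately the negative part of $\mathrm{III}$ can simply be dropped, so this claim is unnecessary, but it should not be invoked.
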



We then turn to the construction of a competitor, which is based on the solution to the following modified equation:
\begin{align}\label{ao10}
	\begin{cases}
		\Delta\tilde\phi=\tilde c &\mbox{in}\ Q\cap\{x_1>\delta\},\\
		\nu\cdot\nabla\tilde\phi=\bar f &\mbox{on}\ \partial Q\cap\{x_1>\delta\},\\
		\nu\cdot\nabla\tilde\phi=\bar g &\mbox{on}\ Q\cap\{x_1=\delta\},
	\end{cases}
\end{align}
where next to \eqref{ao47} we have also introduced the abbreviation, cf.\ (\ref{ao17}):
\begin{align}\label{ao84}
\bar g := g_0-g_1\quad\mbox{on}\;Q\cap\{x_1=\delta\}\cong Q':=(-1,1)^{d-1},
\end{align}
and $\tilde c$ is the constant that makes the problem solvable.

The final lemma states that there exists a competitor of transport cost
close to the Dirichlet integral of $\nabla\tilde\phi$, cf.~(\ref{ao78}), where it matters that
the r.h.s.\ of (\ref{ao78}) is super-linear in $E+\delta^2$ (after optimization in $\tau$), and also states that $\nabla\phi$ and $\nabla\tilde\phi$ have comparable Dirichlet energies,
cf.~(\ref{ao12}).
The proof is given in Section \ref{subsect:competitor}.

\begin{lemma}[Construction]\label{Lecomp}
Given $\tau\in(0,1/4)$, there exists an admissible pair $(\tilde\rho,\tilde j)$ (i.e., satisfying \eqref{ao101}) concentrated on $(0,1)\times\overline{Q}$ such that
\begin{align}\label{ao78}
\lefteqn{\int_{(0,1)\times\mathbb{R}^d}\frac{1}{\tilde\rho}|\tilde j|^2
-\int_{Q\cap\{x_1>\delta\}}|\nabla\tilde\phi|^2}\nonumber\\
&\lesssim \tau(E+\delta^2)+(E+\delta^2)^\frac{d+2}{d+1}+\delta^3+\frac{(M+\delta)^\frac{1}{d-1}}{\tau}E.
\end{align}
In case of $d=2$, the exponent $\frac{1}{d-1}$, which would be equal to $1$,
has to be replaced by an (in fact, any) exponent $\beta<1$.
In addition,
\begin{equation}\label{ao12}
	\int_{Q\cap\{x_1>\delta\}}|\nabla\tilde\phi|^2 -\int_{Q\cap\{x_1>-\delta\}}|\nabla\phi|^2
	\lesssim E^\frac{1}{2}\delta+\delta^2.
\end{equation}
\end{lemma}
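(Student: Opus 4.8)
The plan is to construct the competitor $(\tilde\rho,\tilde j)$ piece by piece, following the decomposition \eqref{ao103}, and then estimate the cost of each piece separately, using sub-additivity of the functional \eqref{ao96}. First I would fix the good slice $R$ from Lemma \ref{Legood}, so that \eqref{ao43}--\eqref{ao46} are available, and recall the support confinement \eqref{ao16}. The \emph{kept trajectories} $(\rho^{\mathrm{kept}},j^{\mathrm{kept}}):=(\rho'_R,j'_R)$ are taken verbatim from \eqref{ao114}; their cost is $\lesssim ME$ by \eqref{ao42}, but since they are concentrated near $\partial Q$ in $\{x_1<\delta\}$ (or in the initial/terminal layers of thickness $\tau$), the factor $E$ is in fact accompanied by the small factor coming from the localization — this is where the term $\frac{(M+\delta)^{1/(d-1)}}{\tau}E$ in \eqref{ao78} originates, since these trajectories live in a thin tube of cross-section $\sim(M+\delta)$ (volume $\sim(M+\delta)^{1/(d-1)}$ relative to the flux direction, in $d\ge 3$; a logarithmic/$\beta$-loss when $d=2$), and the time-averaging over $(0,\tau)$ or $(1-\tau,1)$ produces the $1/\tau$. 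The \emph{main construction} $(\rho^{\mathrm{main}},j^{\mathrm{main}})$ is the one from \cite{GoldmanOtto}, living in $(\tau,1-\tau)\times(Q\cap\{x_1>\delta\})$, built from $\nabla\tilde\phi$ solving \eqref{ao10}; its cost is $\int|\nabla\tilde\phi|^2$ up to the errors $\tau(E+\delta^2)+(E+\delta^2)^{\frac{d+2}{d+1}}$, exactly as in the interior theory (the super-linear term $(E+\delta^2)^{\frac{d+2}{d+1}}$ comes from correcting $\rho^{\mathrm{main}}$ to have the right density, via the $L^\infty$-bound $\rho\le 1$ from \eqref{ao54} and a Hölder-type interpolation).

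Next I would handle the boundary and auxiliary pieces. The \emph{boundary construction} $(\rho^{\mathrm{bdry}},j^{\mathrm{bdry}})$ fills the slab $Q\cap\{-\delta<x_1<\delta\}$: since the two boundaries $g_0,g_1$ differ by $\bar g=g_0-g_1$ with $\|\bar g\|_\infty\lesssim\delta$, one can move mass across this slab at a cost proportional to the squared displacement $\sim\delta^2$ times the relevant volume $\sim\delta$, giving the $\delta^3$ term; the Neumann datum $\bar g$ in \eqref{ao10} is precisely what glues this piece to the main construction across $\{x_1=\delta\}$. The \emph{singular construction} $(\rho^{\mathrm{sing}},j^{\mathrm{sing}})$ absorbs the flux from trajectories entering early / exiting late: these particles stay put (zero velocity) on $(0,1)\times\partial Q$ and are released at uniform rate, so this piece is concentrated on the boundary and contributes \emph{no cost} (the velocity is zero), as asserted ``at no further cost''. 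The \emph{initial and terminal constructions} $(\rho^{\mathrm{ini}},j^{\mathrm{ini}})$, $(\rho^{\mathrm{term}},j^{\mathrm{term}})$ live in $(0,\tau)\times Q$ and $(1-\tau,1)\times Q$ and correct the initial/terminal data from $\chi_{\Omega_i}$ to $\chi_{\Omega_i}-\rho_{i,R}$; by \eqref{ao44} and \eqref{ao46} the mass to be moved is controlled by $\mathrm{dist}(\cdot,\partial Q)$ against $\rho_{0,R}+\rho_{1,R}$ and by $\tau^2 E$ against $\rho'_{0,R}+\rho'_{1,R}$, and since the time available is $\tau$, the cost is $\lesssim\frac{1}{\tau}\cdot\tau^2 E=\tau E$ plus a contribution $\lesssim\tau(E+\delta^2)$ — consistent with the first term of \eqref{ao78}. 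Throughout, one checks that the glued pair satisfies the continuity equation with the correct data, i.e.\ \eqref{ao101}, by matching normal fluxes across the artificial interfaces $\{t=\tau\}$, $\{t=1-\tau\}$, $\{x_1=\delta\}$ and $(0,1)\times\partial Q$; this is bookkeeping but must be done carefully.

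For the second assertion \eqref{ao12}, the idea is to compare the two Poisson problems \eqref{ao05} and \eqref{ao10}. They have the same Neumann datum $\bar f$ on $\partial Q\cap\{x_1>\delta\}$ (resp.\ $\{x_1>0\}$), but differ in the domain ($\{x_1>\delta\}$ vs.\ $\{x_1>0\}$, a shift by $\delta$) and in the bottom boundary datum ($\bar g$ vs.\ $0$). Testing the weak formulations against $\tilde\phi-\phi$ and using $\|\bar g\|_\infty\lesssim\delta$, $|\,\mathrm{area}\,|\lesssim 1$, together with the bound $\int|\nabla\phi|^2\lesssim E$ from \eqref{ao76}, one gets $\int|\nabla(\tilde\phi-\phi)|^2\lesssim E^{1/2}\delta+\delta^2$ by Cauchy--Schwarz (the $E^{1/2}\delta$ from the cross term with $\nabla\phi$, the $\delta^2$ from the $\bar g$-datum itself), and \eqref{ao12} follows after expanding $|\nabla\tilde\phi|^2-|\nabla\phi|^2$ and absorbing the strip $\{-\delta<x_1<\delta\}$ of volume $\lesssim\delta$ on which $|\nabla\phi|^2\lesssim E$. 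The main obstacle I anticipate is the \emph{density correction} in the main and initial/terminal constructions — ensuring $\tilde\rho\le 1$ (or at least that the cost of the correction is genuinely super-linear in $E+\delta^2$) requires the careful interpolation machinery of \cite{GoldmanHuesmannOtto}, and tracking the exact exponents $\frac{d+2}{d+1}$ and $\frac{1}{d-1}$ (with its $d=2$ degeneracy) through the various thin-tube estimates is the delicate quantitative heart of the argument.
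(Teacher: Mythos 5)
Your decomposition \eqref{ao103} and the roles assigned to each piece match the paper, and your accounts of the boundary construction (explicit shearing, cost $\lesssim\delta^3$), the singular construction (zero cost, mass parked on $\partial Q$), and the main construction (cost $\int|\nabla\tilde\phi|^2$ up to $\tau(E+\delta^2)+(E+\delta^2)^{\frac{d+2}{d+1}}$, using \cite[Lemma 2.4]{GoldmanOtto}) are correct. Your route to \eqref{ao12} via comparison of the two Neumann problems is also essentially the paper's, modulo a slight rearrangement.

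However, there is a genuine misattribution that would make the key term $\frac{(M+\delta)^{1/(d-1)}}{\tau}E$ appear for the wrong reason, and actually fail to appear where it is needed. You claim this term is the cost of the kept trajectories, accompanied by a thin-tube volume factor. That is not so: the cost of $(\rho^{\mathrm{kept}},j^{\mathrm{kept}})$ is bounded simply by $\lesssim ME$ via \eqref{ao42}, and this is already of higher order and absorbed; no further localization gain is claimed or needed there. Your heuristic ``cross-section $\sim(M+\delta)$ gives volume $\sim(M+\delta)^{1/(d-1)}$'' is also dimensionally inconsistent (a strip of thickness $M+\delta$ in one normal direction has codimension-one measure $\lesssim M+\delta$, not $(M+\delta)^{1/(d-1)}$). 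The term $\frac{(M+\delta)^{1/(d-1)}}{\tau}E$ actually arises in the initial/terminal construction, whose cost you estimate only as $\lesssim\tau E$; that accounts for the piece $\rho_0'$ (thanks to the $\tau^2 E$ bound in \eqref{ao46}) but misses the piece $\rho_0-\rho_0'$. To handle the latter the paper bounds the Wasserstein cost of the correction by $\frac{1}{\tau}\int|\nabla\phi_0|^2$, where $\phi_0$ solves the Neumann problem \eqref{ao25}, and then estimates $\int|\nabla\phi_0|^2$ by an $H^{-1}$-type bound \eqref{ao28} that isolates the normal integral $\bar\rho_0$ through a Sobolev trace inequality. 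The factor $(M+\delta)^{1/(d-1)}$ enters via H\"older's inequality $\big(\int_{\partial Q}\overline{\rho_0-\rho_0'}^{\frac{2(d-1)}{d}}\big)^{\frac{d}{d-1}}\lesssim(M+\delta)^{\frac{1}{d-1}}\int_{\partial Q}\overline{\rho_0-\rho_0'}^2$, using that the support of $\overline{\rho_0-\rho_0'}$ on $\partial Q$ has measure $\lesssim M+\delta$; the $1/\tau$ comes from the time rescaling, not from a time average. This Neumann-Poisson/Wasserstein/$H^{-1}$-trace mechanism is the technical heart of the initial/terminal step and cannot be replaced by the tube heuristic; as written, your cost accounting does not produce this term at all in the right place, so \eqref{ao78} would not close.
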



We conclude this subsection by demonstrating that the above lemmas indeed imply Proposition \ref{Prharm}.

\begin{proof}[Proof of Proposition \ref{Prharm}]
	Within this proof we restore the index $R\in(1,2)$ taken in Lemma \ref{Legood} for clarity.

	We take $\phi\in C^{\infty}(Q_R)$ as defined in (\ref{ao05}) and by reflection, and prove that this $\phi$ (restricted to $\overline{Q_1}$) is the desired harmonic gradient.
	Since $\nabla\phi$ is harmonic and satisfies (\ref{ao80}) by definition, and (\ref{ao81}) follows from (\ref{ao76}), it only remains to establish (\ref{ao88}).
	We start from (\ref{ao11}) and (\ref{ao12}) in Lemma \ref{Leorth} which combine to
	\begin{align*}
	& \int_{(0,1)\times Q_R}\frac{1}{\rho}|j-\rho\nabla\phi|^2
	- \left(\int_{(0,1)\times Q_R}\frac{1}{\rho}|j|^2
	-\int_{Q_R\cap\{x_1>\delta\}}|\nabla\tilde\phi|^2\right) \nonumber\\
	&\lesssim E^\frac{1}{2}\delta+\delta^2+(\tau+\delta)^\frac{1}{2}E.
	\end{align*}
	We then turn to Lemma \ref{Lecomp} and appeal to the local optimality in Lemma \ref{lem:localoptimality}, namely
	$$\int_{(0,1)\times Q_R}\frac{1}{\rho}|j|^2\le\int_{(0,1)\times\mathbb{R}^d}\frac{1}{\tilde\rho}|\tilde j|^2,$$
	and to (\ref{ao78}) to obtain
	\begin{align}
	\lefteqn{\int_{(0,1)\times Q_R}\frac{1}{\rho}|j-\rho\nabla\phi|^2
	\lesssim E^\frac{1}{2}\delta+\delta^2}\nonumber\\
	&+(\tau+\delta)^\frac{1}{2}(E+\delta^2)+(E+\delta^2)^\frac{d+2}{d+1}
	+\frac{(M+\delta)^\frac{1}{d-1}}{\tau}E.\label{ao86}
	\end{align}
	Given $\varepsilon\in(0,1]$,
	for the very first r.h.s.~term $E^\frac{1}{2}\delta$ we apply Young's inequality
	to control it by $\varepsilon E+\frac{1}{\varepsilon}\delta^2\lesssim \varepsilon E+\frac{1}{\varepsilon}D$ as desired, cf.\ (\ref{ao87}).
	The remaining r.h.s.\ terms can all be made $\lesssim\varepsilon E+\delta^2$ so that $\lesssim\varepsilon E+D$; namely we choose $\tau$ so small that $(\tau+\delta)^\frac{1}{2}\le\varepsilon$, cf.\ (\ref{ao02}),
	and since $E+D\ll_\varepsilon1$ so that both
	$$(E+\delta^2)^\frac{1}{d+1} \stackrel{\eqref{ao53}}{\lesssim} (E+D)^\frac{1}{d+1} \le \varepsilon, \quad \frac{(M+\delta)^\frac{1}{d-1}}{\tau} \stackrel{\eqref{ao87},\eqref{ao53}}{\lesssim}\frac{(E^\frac{1}{d+2}+D^\frac{1}{2})^\frac{1}{d-1}}{\tau} \le \varepsilon.$$
	The arbitrariness of $\varepsilon$ implies the assertion.
\end{proof}

In the remainder of this section we prove all the above lemmas.


\subsection{Local optimality and good slices lemma}\label{subsect:goodslices}

In this subsection we prove Lemmas \ref{lem:localoptimality} and \ref{Legood} in line with \cite{GoldmanHuesmannOtto}.

\begin{proof}[Proof of Lemma \ref{lem:localoptimality}]
	By an approximation argument we may use the (discontinuous) test functions $\chi_{Q_R}\partial_t\zeta$ and $\chi_{Q_R}\nabla\zeta$ in the definition of $(\rho,j)$, cf.\ \eqref{ao39}, for an arbitrary $\zeta\in C^1_c([0,1]\times\mathbb{R}^d)$.
	Hence we have
	\begin{align*}
		&\int_{(0,1)\times Q_R}\partial_t\zeta d\rho + \nabla\zeta\cdot dj \\
		&= \int_0^1\int_{\{X(t)\in Q_R\}} \left(\partial_t\zeta(t,X(t))+\nabla\zeta(t,X(t))\cdot\dot{X}(t)\right)\mathbb{P}(dX)dt\\
		&= \int_{\mathcal{T}_R} \left(\zeta(t_+,X(t_+))-\zeta(t_-,X(t_-))\right) \mathbb{P}(dX),
	\end{align*}
	where $\mathcal{T}_R:=\{X \mid X(t)\in Q_R\ \mbox{for some }t\}$, and the exiting and entering times $t_{\pm,R}=t_{\pm,R}(X)$ are understood as in \eqref{ao71} even if $X$ is totally contained in $Q_R$.
	Since $\mathcal{T}_R$ is decomposed into the disjoint sets $\{X(1)\in Q_R\}$ and $\mathcal{T}_{+,R}=\mathcal{T}_R\cap\{X(1)\not\in Q_R\}$, cf.\ \eqref{ao67}, we deduce from the marginal condition \eqref{ao41} and \eqref{ao38} that
	\begin{align*}
		\int_{\mathcal{T}_R} \zeta(t_{+,R},X(t_{+,R})) \mathbb{P}(dX)
		= \int_{Q_R}\zeta(1,\cdot) \chi_{\Omega_1}dx + \int_{(0,1)\times\partial Q_R}\zeta df_{R,+}.
	\end{align*}
	Similarly we have
	\begin{align*}
		\int_{\mathcal{T}_R} \zeta(t_{-,R},X(t_{-,R})) \mathbb{P}(dX) = \int_{Q_R}\zeta(0,\cdot) \chi_{\Omega_0}dx + \int_{(0,1)\times\partial Q_R}\zeta df_{R,-},
	\end{align*}
	and hence by the definition of $f_R$, cf.\ \eqref{ao37}, we obtain \eqref{ao100}.

	We finally prove local optimality \eqref{ao102}.
	Since by \eqref{ao100} and \eqref{ao101} the pair $(\hat{\rho},\hat{j}):=(\tilde{\rho},\tilde{j})+(\rho,j)|_{(0,1)\times(\mathbb{R}^d\setminus\overline{Q_R})}$ is a competitor in \eqref{ao36}, and since by definition \eqref{ao96} the transportation cost is sub-additive, we have
	$$\int_{(0,1)\times\mathbb{R}^d}\frac{1}{\rho}|j|^2 \leq \int_{(0,1)\times\mathbb{R}^d}\frac{1}{\hat\rho}|\hat{j}|^2 \leq \int_{(0,1)\times\mathbb{R}^d}\frac{1}{\tilde\rho}|\tilde{j}|^2 + \int_{(0,1)\times(\mathbb{R}^d\setminus\overline{Q_R})}\frac{1}{\rho}|j|^2.$$
	The additivity of the classical integral $\int\frac{1}{\rho}|j|^2$ completes the proof.
\end{proof}

\begin{proof}[Proof of Lemma \ref{Legood}]
	We start with (\ref{ao43}) by arguing that
	\begin{align*}
	\int_1^2\int_{(0,1)\times\partial Q_R}f_R^2 dR\leq E.
	\end{align*}
	Indeed, since we have $f_R=\nu\cdot j$ a.e.~on $\partial Q_R$ for a.e.~$R\in(1,2)$, cf. \eqref{ao100},
	we obtain from the co-area formula $\int_{Q_2\setminus Q_1}=\int_1^2\int_{\partial Q_R} dR$ that
	\begin{align*}
	\int_1^2\int_{(0,1)\times\partial Q_R}f_R^2 dR \leq \int_{(0,1)\times Q_2}(\nu\cdot j)^2
	\stackrel{\eqref{ao54}}{\le}\int_{(0,1)\times Q_2}\frac{1}{\rho}|j|^2
	\leq E,
	\end{align*}
	where the last estimate follows since the $L^\infty$-bounds (\ref{ao53}) and (\ref{ao72}) yield that there are no trajectories going through $Q_2$ such that $x_0\not\in Q_{8}$ and $x_1\not\in Q_{8}$.

	Turning to (\ref{ao42}) we show
	\begin{align*}
	\int_1^2\int_{(0,1)\times\mathbb{R}^d}\frac{1}{\rho_R'}|j_R'|^2dR\le ME.
	\end{align*}
	Indeed, it follows from the definition of $(\rho_R',j_R')$, cf.\ \eqref{ao114},
	that
	\begin{align*}
	\lefteqn{\int_{(0,1)\times\mathbb{R}^d}\frac{1}{\rho_R'}|j_R'|^2
	\leq\int_{{\mathcal T}_R'}|X(1)-X(0)|^2\mathbb{P}(dX)}\nonumber\\
	&\stackrel{\eqref{ao66}}{\le}\int_{{\mathcal T}_{+,R}\cup{\mathcal T}_{-,R}}
	|X(1)-X(0)|^2\mathbb{P}(dX)\stackrel{\eqref{ao67}}{=}
	\int I_{X,R} |X(1)-X(0)|^2\mathbb{P}(dX),\nonumber
	\end{align*}
	where $I_{X,R}$ denotes the characteristic function of the set
	$$\left\{ X \left|\ |X(1)|_\infty\ge R>\min_t|X(t)|_\infty\; {\rm or}\;
	|X(0)|_\infty\ge R>\min_t|X(t)|_\infty\right\}\right.,$$
	and $|\cdot|_\infty$ denotes the norm in which the cube is the ball.
	We now integrate in $R$: Using that ($I_{X,R}=1$ $\Longrightarrow$ $\max_t|X(t)|_\infty $ $\ge R$ $>\min_t|X(t)|_\infty$), appealing to
	$\max_t|X(t)|_\infty-\min_t|X(t)|_\infty\le\max_{t',t}|X(t')-X(t)|$, inferring from (\ref{ao53}) that
	\begin{align}\label{ao68}
	\max_{t',t\in(0,1)}|X(t')-X(t)|\le M\quad\mbox{provided}\;X(0)\in Q_{4}\;\mbox{or}\;X(1)\in Q_{4},
	\end{align}
	and noting that by (\ref{ao72}) the set of (straight) trajectories $X$ with $X(t)\in Q_2$ for some $t$
	is contained in the set of trajectories with $X(1)\in Q_{4}$ or $X(0)\in Q_{4}$, we have
	\begin{align*}
	\lefteqn{\int_1^2\int_{(0,1)\times\mathbb{R}^d}\frac{1}{\rho_R'}|j_R'|^2 dR
	}\nonumber\\
	&\le M
	\int_{\{X(1)\in Q_{4}\}\cup\{X(0)\in Q_{4}\}}|X(1)-X(0)|^2\mathbb{P}(dX)
	\lesssim M E,
	\end{align*}
	where for the last estimate we directly inferred from (\ref{ao73}) that
	\begin{align}\label{ao69}
	\int_{\{X(0)\in Q_{4}\}\cup\{X(1)\in Q_{4}\}}|X(1)-X(0)|^2\mathbb{P}(dX)\lesssim E.
	\end{align}

	We now turn to the proof of (\ref{ao44}), and (\ref{ao46}).
	Appealing to symmetry, we restrict ourselves to $\rho_0$ and $\rho_0'$.
	The pointwise bounds in form of $0\le\rho_0'\le\rho_0\le 1$
	follow via definitions (\ref{ao45}) and (\ref{ao64}) from (\ref{ao41}).
	For the integral bound we will restrict ourselves to (\ref{ao46}),
	since (\ref{ao44}) follows from the latter for $\tau=\frac{1}{2}$, and show
	that
	\begin{align*}
	\int_1^2\int_{\mathbb{R}^d}{\rm dist}(\cdot,\partial Q_R)\rho_{0,R}' dR\le \tau^2E.
	\end{align*}
	By definition (\ref{ao64}), we have
	\begin{align*}
	\int_{\mathbb{R}^d}{\rm dist}(\cdot,\partial Q_R)\rho_{0,R}'
	=\int_{{\mathcal T}_{+,R}''}|R-|X(0)|_\infty|\mathbb{P}(dX).
	\end{align*}
	By definition (\ref{ao70}) of ${\mathcal T}_{+,R}''$ and (\ref{ao71}) of $t_{+,R}$
	we have for $X\in {\mathcal T}_{+,R}''$ that $\sup_{t<\tau}|X|_\infty$ $\ge R$
	$>\inf_{t<\tau}|X|_\infty$ and thus in particular
	$$|R-|X(0)|_\infty| \le \sup_{t<\tau}|X|_\infty - \inf_{t<\tau}|X|_\infty \le \tau|X(1)-X(0)|,$$
	where the last estimate follows since trajectories are straight.
	Hence we obtain from integrating in $R$ (and noting the implicitly-multiplied characteristic function on the set $\{(X,R)\mid\sup_{t<\tau}|X|_\infty\ge R>\inf_{t<\tau}|X|_\infty\}$ as above),
	\begin{align*}
	\int_1^2\int_{\mathbb{R}^d}{\rm dist}(\cdot,\partial Q_R)\rho_{0,R}'dR \le \tau^2\int_{\{X\mid \exists t,\ X(t)\in Q_2 \}}|X(1)-X(0)|^2\mathbb{P}(dX).
	\end{align*}
	By (\ref{ao72}) again, the r.h.s.~integral is estimated by $E$, cf.~(\ref{ao69}).
\end{proof}



\subsection{Approximate orthogonality}\label{subsect:orthogonality}

\begin{proof}[Proof of Lemma \ref{Leorth}]
	By the definition of the integrand $\frac{1}{\rho}|j|^2$, the support condition (\ref{ao16}) implies that also
	$\frac{1}{\rho}|j-\rho\nabla\phi|^2$ and $\frac{1}{\rho}|j|^2$ are supported in $\{x_1>-\delta\}$.
	Hence we obtain from expanding the square:
	\begin{align}\label{ao95}
	\int_{(0,1)\times Q}\frac{1}{\rho}|j-\rho\nabla\phi|^2
	&=\int_{(0,1)\times Q}\frac{1}{\rho}|j|^2
	-\int_{(0,1)\times(Q\cap\{x_1>-\delta\})}(2-\rho)|\nabla\phi|^2\nonumber\\
	&+2\int_{(0,1)\times(Q\cap\{x_1>-\delta\})}(\nabla\phi-j)\cdot\nabla\phi.
	\end{align}
	Therefore by $\rho\le 1$, cf.~(\ref{ao54}), for (\ref{ao11}) it
	remains to bound the last term.

	In what follows we first prepare some trace estimates for later use, then rephrase the above last term so that the estimates in the previous step are applicable, and finally complete the proof by estimating the rephrased one; estimate (\ref{ao76}) is shown in the middle of Step 1.

	{\em Step 1: Trace estimates and control of boundary fluxes.}
	We first note 
	\begin{align}\label{ao48}
	\int_{(0,1)\times\partial Q}(f-f')^2+\int_{\partial Q}\bar f^2\lesssim E.
	\end{align}
	Indeed, since we have a splitting $f'=f'_+-f'_-$ with
	$0\le f_{\pm}'\le f_{\pm}$, cf.~(\ref{ao38}) and (\ref{ao55}),
	and $f_\pm$ being the positive and negative parts of $f$ as was shown right after (\ref{ao38}),
	we have
	\begin{align}\label{ao65}
	|f-f'|\le|f|,\quad |f'|\le |f|,
	\end{align}
	so that (\ref{ao48}) follows from definition (\ref{ao47}) and estimate (\ref{ao43}).

	By standard $L^2$-based maximal regularity theory for (\ref{ao05}) in terms
	of the Neumann data (see Remark \ref{rem:L2maximalregularity} below for details), we have
	\begin{align}\label{ao06}
	\sup_{x_1}\int_{Q'}|\nabla\phi(x_1,\cdot)|^2\lesssim\int_{\partial Q}\bar f^2
	\stackrel{\eqref{ao48}}{\lesssim} E,
	\end{align}
	where $Q'=\{x'\in(-1,1)^{d-1}\}$ denotes the $(d-1)$-dimensional cube in tangential
	direction; we call (\ref{ao06}) a maximal regularity estimate since the left and
	right hand side have the same scaling.
	Estimate (\ref{ao76}) now follows directly from (\ref{ao06}).

	We further post-process (\ref{ao06}): For a test function $\zeta$ on $Q\cap\{x_1>a\}$ (for some $a\in(-1/2,1/2)$), combining the Poincar\'e-trace estimate in $(d-1)$ dimensions
	\begin{align*}
	\int_{\partial Q'}\zeta^2+\int_{Q'}\zeta^2
	\lesssim\int_{Q'}|\nabla'\zeta|^2+\big|\int_{Q'}\zeta\big|^2,
	\end{align*}
	with the obvious
	\begin{align*}
	\sup_{x_1}\big|\int_{Q'}\zeta\big|^2
	\lesssim\sup_{x_1}\int_{Q'}|\partial_1\zeta|^2
	+\big|\int_{Q\cap\{x_1>a\}}\zeta\big|^2,
	\end{align*}
	we obtain
	\begin{align}\label{ao75}
	\sup_{x_1}\big(\int_{\partial Q'}\zeta^2+\int_{Q'}\zeta^2\big)
	\lesssim\sup_{x_1}\int_{Q'}|\nabla\zeta|^2
	+\big|\int_{Q\cap\{x_1>a\}}\zeta\big|^2.
	\end{align}
	Applying this to $\zeta=\phi$ on $Q\cap\{x_1>-\delta\}$, where we normalize such that
	\begin{align}\label{ao09}
	\int_{Q\cap\{x_1>-\delta\}}\phi=0,
	\end{align}
	we obtain by (\ref{ao06}) the (co-dimension two) estimate
	\begin{align}
	\sup_{x_1}\left(\int_{\partial Q'}\phi^2+\int_{Q'}\phi^2\right)\lesssim E.\label{ao07}
	\end{align}

	{\em Step 2: Rephrasing in terms of $\phi$ and boundary fluxes.}
	We now rephrase the last term in (\ref{ao95}) in terms of $\phi$ and normal fluxes on the boundary.
	Integrating in $t$ the continuity equation and recalling the normal flux boundary data $f$ for $j$, and the initial data $\chi_{\Omega_0}$ and terminal data $\chi_{\Omega_1}$ for $\rho$ (namely taking constant-in-$t$ test functions $\zeta$ in (\ref{ao100})), we see that $\bar j:=\int_0^ 1jdt$ distributionally satisfies
	\begin{align}\label{ao115}
		\begin{cases}
			\nabla\cdot\bar j=-g & \mbox{in}\ Q\cap\{x_1>-\delta\},\\
			\nu\cdot\bar j=\int_0^1fdt & \mbox{on}\ \partial Q\cap\{x_1>-\delta\},\\
			\nu\cdot\bar j=0 & \mbox{on}\ Q\cap\{x_1=-\delta\},
		\end{cases}
	\end{align}
	where we have introduced the characteristic function $g:=\chi_{\Omega_1}-\chi_{\Omega_0}$, that is,
	\begin{align}\label{ao56}
		g(x):=
		\begin{cases}
			1 & \mbox{for}\ g_1(x')<x_1<g_0(x'),\\
			-1 & \mbox{for}\ g_0(x')<x_1<g_1(x'),\\
			0 & \mbox{else},
		\end{cases}
	\end{align}
	in the first line, cf.~(\ref{ao17}), and used (\ref{ao16}) for the last item.
	Therefore, appealing to (\ref{ao05}) (in its reflected form), to (\ref{ao09}), and to (\ref{ao115}) tested by $\phi$, we obtain
	for the last term in (\ref{ao95})
	\begin{align*}
	\lefteqn{\int_{(0,1)\times(Q\cap\{x_1>-\delta\})}(\nabla\phi-j)\cdot\nabla\phi
	=\int_{Q\cap\{x_1>-\delta\}}(\nabla\phi-\bar j)\cdot\nabla\phi}\nonumber\\
	&\stackrel{\eqref{ao09},\eqref{ao115}}{=}-\int_{Q\cap\{x_1>-\delta\}}g\phi
	+\int_{\partial Q\cap\{x_1>-\delta\}}\phi\left(\bar f-\int_0^1fdt\right)
	-\int_{Q\cap\{x_1=-\delta\}}\phi\partial_1\phi.
	\end{align*}
	We claim that we may rewrite the last term as
	\begin{align}\label{ao89}
	\lefteqn{-\int_{Q\cap\{x_1=-\delta\}}\phi\partial_1\phi}\nonumber\\
	&=\int_{-\delta}^0\int_{Q'}\nabla'\phi(-\delta,x')\cdot\nabla'\phi(x_1,x')dx'dx_1
	+\delta c\int_{Q'}\phi(-\delta,x')dx'.
	\end{align}
	Indeed, we obtain \eqref{ao89} by using the homogeneous flux boundary condition in (\ref{ao05}) in form of
	$$-\partial_1\phi(-\delta,x')=\int_{-\delta}^0\partial_1^2\phi(x_1,x')dx_1,$$
	combining it with the Poisson equation in form of
	$$\int_{-\delta}^0\partial_1^2\phi(x_1,x')dx_1=\delta c-\int_{-\delta}^0\Delta'\phi(x_1,x')dx_1,$$
	and appealing to
	integration by parts in $x'$ in form of
	$$-\int_{Q'}\phi(-\delta,x')\int_{-\delta}^0\Delta'\phi(x_1,x')dx_1 dx'=\int_{-\delta}^0\int_{Q'}\nabla'\phi(-\delta,x')\cdot\nabla'\phi(x_1,x')dx'dx_1,$$
	where there are no boundary terms here since $\bar f$ vanishes for $x_1<\delta$ by (\ref{ao51}) and (\ref{ao47}).
	Hence we rearrange the terms as follows
	\begin{align*}
	&\int_{(0,1)\times(Q\cap\{x_1>-\delta\})}(\nabla\phi-j)\cdot\nabla\phi \nonumber\\
	&=-\int_{Q\cap\{|x_1|<\delta\}}g\phi-\int_{\partial Q\cap\{|x_1|<\delta\}}\phi\int_0^1fdt
	+\int_{\partial Q\cap\{x_1>\delta\}}\phi\left(\bar f-\int_0^1fdt\right) \nonumber\\
	& \quad +\int_{Q\cap\{-\delta<x_1<0\}}(\nabla'\phi|_{x_1=-\delta})\cdot\nabla'\phi
	+\delta c\int_{Q\cap\{x_1=-\delta\}}\phi\nonumber\\
	& =:T_1+T_2+T_3+T_4+T_5,
  \end{align*}
	where we also used ${\rm supp}g\subset\{|x_1|\le\delta\}$, cf.~(\ref{ao56}) and (\ref{ao02}).

	{\em Step 3: Main estimates.}
	We finally estimate the above $T_1,\dots,T_5$.
	Using $|g|\le 1$ and (\ref{ao07}), we see that $T_1$ is responsible
	for the leading-order term in (\ref{ao11}):
	\begin{align*}
	|T_1|\lesssim E^\frac{1}{2}\delta.
	\end{align*}
	For $T_2$ we note that by the Cauchy-Schwarz inequality
	\begin{align*}
	|T_2|\le\left(\Big(\sup_{x_1}\int_{\partial Q'}\phi^2\Big)2\delta\int_{(0,1)\times\partial Q}f^2\right)^{1\over 2}
	\stackrel{\eqref{ao07},\eqref{ao43}}{\lesssim} \delta^\frac{1}{2}E,
	\end{align*}
	which is contained in the r.h.s.~of (\ref{ao11}).
	We turn to $T_3$ and note that by definition (\ref{ao47}) of
	$\bar f$ we have $\int_0^1f dt-\bar f=\int_0^1f'dt$; by definition
	(\ref{ao55}) of $f'$ we have, provided $x_1>\delta$, that $f'$ vanishes unless
	$t\in(0,\tau)\cup(1-\tau,1)$, so that
	$$\int_{\partial Q\cap\{x_1>\delta\}}\left(\int_0^1f'dt\right)^2 \le 2\tau\int_{(0,1)\times \partial Q}f'^2.$$
	Finally by (\ref{ao65}) we may apply (\ref{ao43}), to the effect of
	$\int_{(0,1)\times \partial Q}f'^2\lesssim E$. In conclusion we have
	\begin{align*}
	\int_{\partial Q\cap\{x_1>\delta\}}\left(\bar f-\int_0^1f dt\right)^2\lesssim\tau E,
	\end{align*}
	so that we obtain with help of the Cauchy-Schwarz inequality and (\ref{ao07})
	\begin{align*}
	|T_3|\lesssim\tau^\frac{1}{2} E,
	\end{align*}
	in line with (\ref{ao11}). We directly obtain from the Cauchy-Schwarz inequality
	and (\ref{ao06}):
	\begin{align*}
	|T_4|\lesssim\delta E,
	\end{align*}
	in agreement with (\ref{ao11}) (since $\delta\ll1$).
	Finally, for $T_5$ we note that from (\ref{ao05}) we have
	$c|Q\cap\{x_1>0\}|$ $=\int_{\partial Q\cap\{x_1>0\}}\bar f$, so that
	by (\ref{ao48}) we get
	\begin{align}\label{ao90}
	|c|\lesssim E^\frac{1}{2},
	\end{align}
	hence in conjunction with (\ref{ao07}) we likewise obtain
	\begin{align*}
	|T_5|\lesssim\delta E,
	\end{align*}
	completing the proof.
\end{proof}

\begin{remark}[$L^2$-maximal regularity]\label{rem:L2maximalregularity}
	For the reader's convenience, we sketch the argument for (\ref{ao06}).
	By the triangle inequality, it is enough to consider Neumann data that are supported
	on one of the $2d$ faces of the box $Q$, which we take without loss of generality to
	be the cube $(0,2)^d$ and $\{x_1=2\}$ to be that face.
	By even reflection, which preserves the equation, we may replace the cube by a slab
	$S:=(-2,2)\times[-2,2)^{d-1}$
	of thickness 4 and (lateral) periodicity with period 4. By treating constant Neumann
	data explicitly, we may restrict to the case of Neumann data of vanishing spatial average.
	By the boundedness of the Neumann-to-Dirichlet map (which easily may be seen for instance on the
	Fourier side) we may assume that we have a harmonic function $\phi$ on the periodic slab
	$S$ 
	of which we control $\int_{\partial S}|\nabla\phi|^2$
	on the boundary $\partial S:=\{-2,2\}\times[-2,2)^{d-1}$ and where we seek control on faces
	$\int_{S\cap\{x_i=\mathrm{const.}\}}|\nabla\phi|^2$. Hence we may replace
	(any component of) the gradient $\nabla\phi$ by a harmonic function $\phi$
	in both $L^2$-norms.
	Control of tangential faces $\int_{S\cap\{x_1=\mathrm{const.}\}}\phi^2$
	can easily be seen, for instance on the Fourier side.
	For a perpendicular face, say
	$\int_{S\cap\{x_2=0\}}\phi^2$, the argument goes as follows:
	By the triangle inequality, we may assume that $\phi$ vanishes on the lower boundary $\{x_1=-2\}$.
	Let $\psi$ be such that $\partial_1\psi=\phi$ and that it vanishes on
	the upper boundary $\{x_1=2\}$.
	We easily see, for instance by considering the Fourier transform in the tangential
	directions $x'$ to obtain a representation of $\phi$ in terms of its boundary data,
	that the control of the boundary data in terms of
	$\int_{\partial S\cap\{x_1=2\}}\phi^2$ yields control of the harmonic extension
	$\phi$ (and its anti-derivative $\psi$)
	in the weighted $L^2_{x'}$-based norm
	$$\int_{S}\Big(\frac{1}{2-x_1}(\partial_2\psi)^2+(2-x_1)(\partial_1\phi)^2+\phi^2\Big),$$
	where the power of the distance to the boundary $\{x_1=2\}$ is optimal (for the
	first two terms) and dictated by scaling.
	By Young's inequality, this yields control of
	$\int_{(-2,2)^d}(|\partial_2\psi\partial_1\phi|+\phi^2).$
	By an integration by
	parts in $x_1$ using $\phi|_{\{x_1=-2\}}=0$ and $\psi|_{\{x_1=2\}}=0$, this yields control
	of
	$$\int_{-2}^2\Big( \big|\int_{(-2,2)^{d-1}}\phi\partial_2\phi dx'' \big|+\int_{(-2,2)^{d-1}}\phi^2dx'' \Big)dx_2,$$
	where we have set $x'':=(x_1,x_3,\cdots,x_d)$.
	Because of
	$$\frac{d}{dx_2}\int_{(-2,2)^{d-1}}\phi^2dx''=2\int_{(-2,2)^{d-1}}\phi\partial_2\phi dx'',$$
	this yields the desired control of $\sup_{x_2}\int_{(-2,2)^{d-1}}\phi^2 dx''$.
\end{remark}


\subsection{Construction of a competitor}\label{subsect:competitor}

\begin{proof}[Proof of Lemma \ref{Lecomp}]
	We first independently address estimate (\ref{ao12}), and then turn to the construction of an admissible competitor $(\tilde{\rho},\tilde{j})$.
	As is described in Section \ref{subsec:outline}, we will construct $(\tilde{\rho},\tilde{j})$ of the form \eqref{ao103}, where all the r.h.s.\ terms are concentrated on $(0,1)\times Q$ except for the singular construction $(\rho^\mathrm{sing},j^\mathrm{sing})$ concentrated on $(0,1)\times\partial Q$, and then appeal to sub-additivity of the cost functional.
	The construction is divided into a number of steps.

	{\em Step 1: Comparability of the energies of $\phi$ and $\tilde{\phi}$.}
	For (\ref{ao12}), obviously, it is sufficient to estimate
	\begin{align}\label{ao15}
	T:=\int_{Q\cap\{x_1>\delta\}}|\nabla\tilde\phi|^2
	-\int_{Q\cap\{x_1>\delta\}}|\nabla\phi|^2=\int_{Q\cap\{x_1>\delta\}}\nabla(\tilde\phi+\phi)\cdot\nabla(\tilde\phi-\phi).
	\end{align}
	By an integration by parts and (\ref{ao05}) and (\ref{ao10}), $T$ can be rewritten as
	\begin{align*}
	T=-\int_{Q\cap\{x_1>\delta\}}(\tilde\phi+\phi)(\tilde c-c)
	+\int_{Q\cap\{x_1=\delta\}}(\tilde\phi+\phi)(\bar g+\partial_1\phi).
	\end{align*}
	As for (\ref{ao89}), we rewrite the term
	$\int_{Q\cap\{x_1=\delta\}}(\tilde\phi+\phi)\partial_1\phi$, leading to
	\begin{align*}
	T&=-(\tilde c-c)\int_{Q\cap\{x_1>\delta\}}(\tilde\phi+\phi)
	+\int_{Q\cap\{x_1=\delta\}}(\tilde\phi+\phi)(\bar g+2\delta c)\nonumber\\
	&+\int_{-\delta}^\delta
	\int_{Q'}\nabla'(\tilde\phi+\phi)(\delta,x')\cdot\nabla'\phi(x_1,x')dx'dx_1.
	\end{align*}
	We apply (\ref{ao75})
	to $\zeta:=\tilde\phi+\phi$ on $Q\cap\{x_1>\delta\}$, where we normalize $\tilde\phi$ such
	that $\int_{Q\cap\{x_1>\delta\}}\zeta$ $=0$, so that in particular
	the first of the above r.h.s.~terms vanishes, to the effect of
	\begin{align}\label{ao91}
	|T|\lesssim\left(\sup_{x_1}\int_{Q'}|\nabla\zeta|^2\Big(\int_{Q'}\bar g^2+\delta^2c^2+
	\delta^2\sup_{x_1}\int_{Q'}|\nabla\phi|^2
	\Big)\right)^\frac{1}{2}.
	\end{align}
	We now appeal to the a priori estimate (\ref{ao06})
	not just in case of $\nabla\phi$, but also in case of $\nabla\tilde\phi$,
	where in view of (\ref{ao10}), the (squared) $L^2$-norm of the Neumann data
	is estimated by $\int_{\partial Q}\bar f^2$ $+\int_{Q'}\bar g^2$, where
	the second term contributes
	\begin{align}\label{ao92}
	\int_{Q'}\bar g^2\lesssim\delta^2,
	\end{align}
	cf.~(\ref{ao84}) and (\ref{ao02}), to obtain
	\begin{align}\label{ao93}
	\sup_{x_1}\int_{Q'}|\nabla\zeta|^2
	\lesssim\sup_{x_1}\left(\int_{Q'}|\nabla\tilde\phi|^2+\int_{Q'}|\nabla\phi|^2\right)
	\lesssim \delta^2+E.
	\end{align}
	Inserting (\ref{ao06}), (\ref{ao90}), (\ref{ao92}), and (\ref{ao93}) into (\ref{ao91}) (and using $E\ll1$)
	yields
	\begin{align*}
	|T|\lesssim E^{1\over 2}\delta+\delta^2,
	\end{align*}
	which amounts to the r.h.s.~of (\ref{ao12}).

	{\em Step 2: Necessary mass balance.}
	From now on we argue for the construction.
	In this step we first obtain some necessary mass balance conditions for solvability of the continuity equation, which are used for determining (the initial and terminal density in) the main construction.
	After this step we give the precise definition and estimate of each term of $(\tilde{\rho},\tilde{j})$, cf.\ \eqref{ao103}.

	We start with a remark on global mass preservation in our various constructions:
	As could be derived from (\ref{ao41}) with $\zeta=\chi_{Q}$ and (\ref{ao37}) with $\zeta=1$, we have
	\begin{align}\label{ao57}
	|\Omega_0\cap Q|=|\Omega_1\cap Q|+\int_{(0,1)\times\partial Q}f.
	\end{align}
	Likewise, we have from the definition (\ref{ao55}) of $f'$ with $\zeta=1$ and the definitions
	(\ref{ao45}) of $\rho_0,\rho_1$ with $\zeta=\chi_Q$ (not continuous but allowable by approximation) that
	\begin{align}\label{ao58}
	\int_Q\rho_0=\int_Q\rho_1+\int_{(0,1)\times\partial Q}f'.
	\end{align}

	The initial construction $(\rho^\mathrm{ini},j^\mathrm{ini})$ in the layer $(0,\tau)\times Q$, which connects
	the density $\chi_{\Omega_0}-\rho_0$ at $t=0$ to the piecewise constant density
	$\chi_{\Omega_0\cap\{x_1<\delta\}}+c_0\chi_{\{x_1>\delta\}}$
	at $t=\tau$ with no flux across $(0,\tau)\times\partial Q$, requires $c_0$ to be defined
	such that
	\begin{align}\label{ao59}
	|Q\cap\{x_1>\delta\}|-\int_Q\rho_0=c_0|Q\cap\{x_1>\delta\}|.
	\end{align}

	Similarly, the terminal construction $(\rho^\mathrm{term},j^\mathrm{term})$ lives in the layer $(1-\tau,1)\times Q$ and
	connects $\chi_{\Omega_1\cap\{x_1<\delta\}}+c_1\chi_{\{x_1>\delta\}}$
	at $t=1-\tau$ to $\chi_{\Omega_1}-\rho_1$ with no flux across $(1-\tau,1)\times\partial Q$,
	which requires $c_1$ to be defined such that
	\begin{align}\label{ao60}
	|Q\cap\{x_1>\delta\}|-\int_Q\rho_1=c_1|Q\cap\{x_1>\delta\}|.
	\end{align}

	The boundary construction $(\rho^\mathrm{bdry},j^\mathrm{bdry})$ is in the left half-space
	$(\tau,1-\tau)\times(Q\cap\{x_1<\delta\})$; it connects $\chi_{\Omega_0}$ at $t=\tau$
	to $\chi_{\Omega_1}$ at $t=1-\tau$, with the constant-in-$t$ flux $-\frac{1}{1-2\tau}\bar g$,
	cf.~(\ref{ao84}),
	across $(\tau,1-\tau)\times(Q\cap\{x_1=\delta\})$ and no flux through the remaining boundary
	portion $(\tau,1-\tau)\times(\partial Q\cap\{x_1>\delta\})$ (in line with the support condition in (\ref{ao51})). In particular, as could also be seen
	from (\ref{ao17}) and (\ref{ao84}), we have
	\begin{align}\label{ao77}
	|\Omega_0\cap Q\cap \{x_1<\delta\}|=|\Omega_1\cap Q\cap \{x_1<\delta\}|-\int_{Q\cap\{x_1=\delta\}}\bar g.
	\end{align}

	It then follows from (\ref{ao57}), (\ref{ao58}), (\ref{ao59}), (\ref{ao60}), (\ref{ao77}),
	and the support condition on $f-f'$ in (\ref{ao51}) that we have the necessary
	mass balance for the main construction $(\rho^\mathrm{main},j^\mathrm{main})$ in $(\tau,1-\tau)\times(Q\cap\{x_1>\delta\})$, namely
	\begin{equation*}
		c_0|Q\cap\{x_1>\delta\}|=c_1|Q\cap\{x_1>\delta\}|+\int_{(0,1)\times(\partial Q\cap\{x_1>\delta\})}(f-f')
		+\int_{Q\cap\{x_1=\delta\}}\bar g,
	\end{equation*}
	from which we infer that the constant $\tilde{c}$ in \eqref{ao10} satisfies
	\begin{equation}\label{ao113}
		\tilde{c}=c_0-c_1.
	\end{equation}
	We also note for later purpose that we learn from (\ref{ao44}) that for $r\in(0,1)$,
	$$\int_Q(\rho_0+\rho_1) \leq 2|Q\setminus (1-r)Q| + \frac{1}{r}\int_{(1-r)Q}\mathrm{dist}(\cdot,\partial Q)(\rho_0+\rho_1) \lesssim r +\frac{1}{r}E,$$
	and by optimizing in $r$ that $\int_Q(\rho_0+\rho_1)\lesssim E^{\frac{1}{2}}$, so that by (\ref{ao59}) and (\ref{ao60}) we must have in our regime of $E\ll_\varepsilon 1$, cf.~(\ref{ao73}),
	\begin{align}\label{ao62}
	|c_0-1|,\ |c_1-1| \lesssim E^{\frac{1}{2}} \ll_\varepsilon 1.
	\end{align}

	{\em Step 3: Kept trajectories.}
	We now turn to the estimate of a competitor, starting with the kept trajectories $(\rho^\mathrm{kept},j^\mathrm{kept}):=(\rho',j')|_{(0,1)\times Q}$, cf.\ \eqref{ao114} and Figure \ref{fig:kept}, the cost of which is already estimated, cf.~(\ref{ao42}):
	\begin{align}\label{ao107}
		\int_{(0,1)\times\mathbb{R}^d}\frac{1}{\rho^\mathrm{kept}}|j^\mathrm{kept}|^2 \lesssim ME,
	\end{align}
	which is of higher order with respect to the r.h.s.~of (\ref{ao78}).

	\begin{figure}[htbp]
		\begin{center}
			\includegraphics[width=125mm]{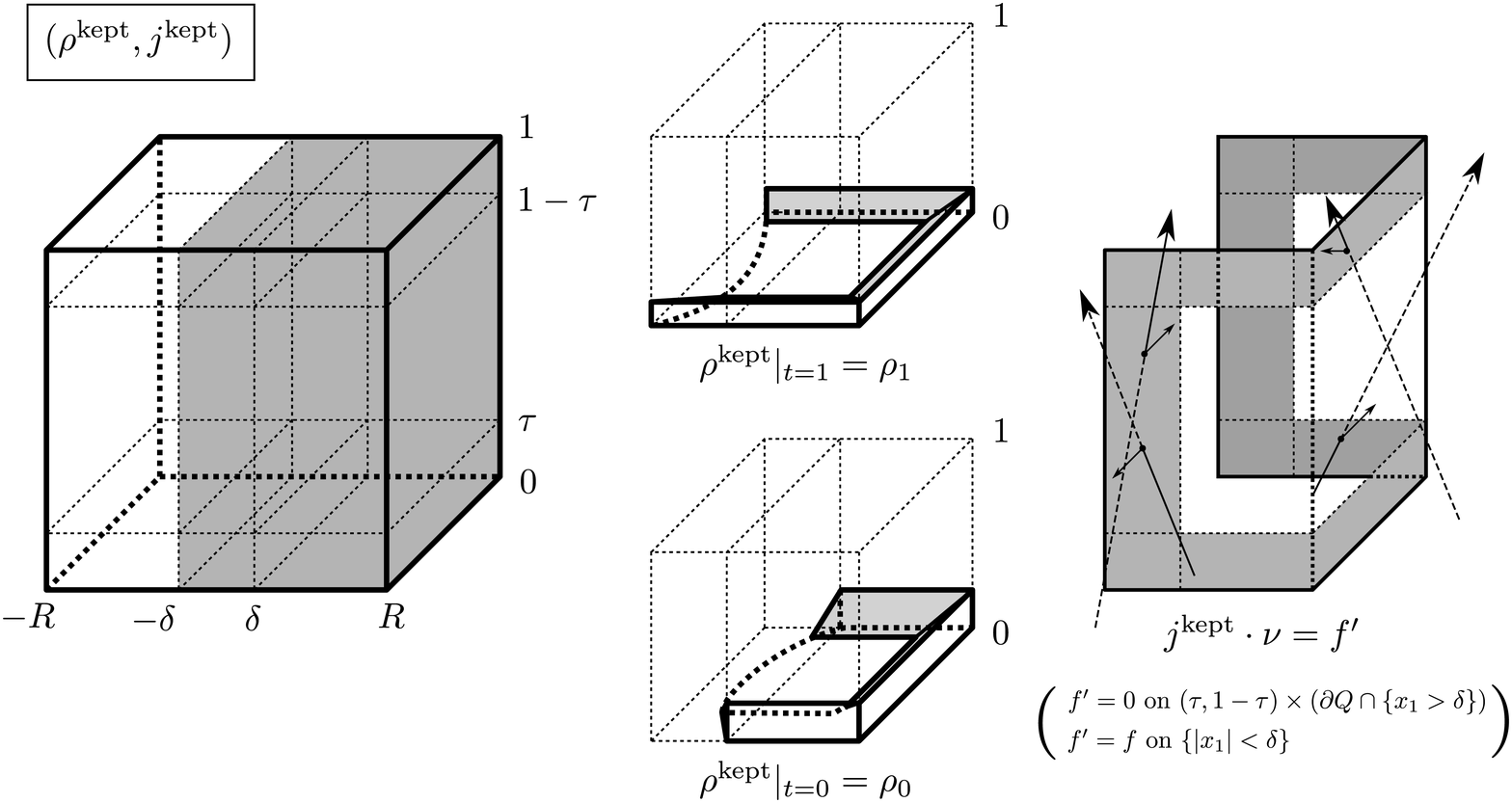}
			\caption{Kept trajectories $(\rho^\mathrm{kept},j^\mathrm{kept})$.}
	    \label{fig:kept}
		\end{center}
	\end{figure}

	{\em Step 4: Main construction.}
	We now go into individual constructions, starting with the main construction.
	The main construction $(\rho^\mathrm{main},j^\mathrm{main})$ is defined by the sum of two measures $(\tilde{s},\tilde{q})$ and $(s,q)$ both living in $(\tau,1-\tau)\times(Q\cap\{x_1>\delta\})$.
	The former is defined by $(\tilde{s},\tilde{q}):=(\frac{t-\tau}{1-2\tau}c_1+\frac{1-\tau-t}{1-2\tau}c_0,\frac{1}{1-2\tau}\nabla\tilde{\phi})$,	cf.\ \eqref{ao10}, which satisfies the continuity equation by \eqref{ao113}, and connects the constant density $c_0$
	at $t=\tau$ to the constant density $c_1$ at $t=1-\tau$, with constant-in-$t$ normal fluxes $\frac{1}{1-2\tau}\bar{g}$ across $(\tau,1-\tau)\times (Q\cap\{x_1=\delta\})$, cf.\ (\ref{ao84}), and $\frac{1}{1-2\tau}\bar{f}$ across the remaining boundary portion $(\tau,1-\tau)\times(\partial Q\cap\{x_1>\delta\})$, cf.\ \eqref{ao47}; its cost is directly computed as
	\begin{align*}
		\int_{(\tau,1-\tau)\times(Q\cap\{x_1>\delta\})}\frac{1}{\tilde{s}}|\tilde{q}|^2 = \frac{1}{1-2\tau}\int_0^1\frac{dt}{tc_1+(1-t)c_0}\int_{Q\cap\{x_1>\delta\}}|\nabla\tilde\phi|^2,
	\end{align*}
	and hence by \eqref{ao62} and by $\int_{Q\cap\{x_1>\delta\}}|\nabla\tilde{\phi}|^2\lesssim E+\delta^2$ (e.g.\ from (\ref{ao06})) we have
	\begin{align}\label{ao111}
		\int_{(\tau,1-\tau)\times(Q\cap\{x_1>\delta\})}\frac{1}{\tilde{s}}|\tilde{q}|^2 - \int_{Q\cap\{x_1>\delta\}}|\nabla\tilde\phi|^2 \lesssim (\tau+E^{\frac{1}{2}})(E+\delta^2).
	\end{align}
	The latter $(s,q)$ is based on the boundary layer construction from \cite[Lemma 2.4]{GoldmanOtto}\footnote{
	The original statement deals only with a ball domain, but it is not difficult to similarly argue for the present rectangular domain $Q\cap\{x_1>\delta\}$ with small $\delta>0$.
	}; it satisfies the continuity equation in $(\tau,1-\tau)\times(Q\cap\{x_1>\delta\})$ and $|s|\leq1/2$,
	connects the zero-densities at $t=\tau$ and $t=1-\tau$, with no flux across $(\tau,1-\tau)\times(Q\cap\{x_1=\delta\})$ and flux $f^\mathrm{main}-\frac{1}{1-2\tau}\bar{f}$ across $(\tau,1-\tau)\times(\partial Q\cap\{x_1>\delta\})$, where $f^\mathrm{main}$ is defined (on $(0,1)\times\partial Q$ for later use) by
	\begin{align}\label{ao63}
		f^\mathrm{main} :=
		\begin{cases}
			f-f'+\frac{1}{1-2\tau}\int_{(0,\tau)\cup(1-\tau,1)}(f-f')dt & \mbox{if}\ \tau<t<1-\tau,\\
			0 & \mbox{else},
		\end{cases}
	\end{align}
	so that since $\int_\tau^{1-\tau}(f^\mathrm{main}-\frac{1}{1-2\tau}\bar{f})dt=0$ we may apply \cite[Lemma 2.4]{GoldmanOtto}; in addition, noting that $\int_\tau^{1-\tau}(f^\mathrm{main}-\frac{1}{1-2\tau}\bar{f})^2dt \lesssim E$ by (\ref{ao47}) and (\ref{ao48}), we may choose $(s,q)$ to be concentrated on $(\tau,1-\tau)\times A_r$ for an $r \lesssim E^\frac{1}{d+1}\ll1$,
	where $A_r:=Q\cap\{\mathrm{dist}(\cdot,\partial(Q\cap\{x_1>\delta\}))<r\}$, and also satisfy the key estimate
	\begin{equation}\label{ao112}
		\int_{(\tau,1-\tau)\times Q}|q|^2 \lesssim \Big(\int_{(\tau,1-\tau)\times\partial Q}\big(f^\mathrm{main}-\frac{1}{1-2\tau}\bar{f}\big)^2\Big)^\frac{d+2}{d+1} \lesssim E^\frac{d+2}{d+1}.
	\end{equation}
	In summary, $(\rho^\mathrm{main},j^\mathrm{main})$ connects the constant density $c_0$
	at $t=\tau$ to the constant density $c_1$ at $t=1-\tau$, with fluxes $\frac{1}{1-2\tau}\bar{g}$ across $(\tau,1-\tau)\times(Q\cap\{x_1=\delta\})$ and $f^\mathrm{main}$ across $(\tau,1-\tau)\times(\partial Q\cap\{x_1>\delta\})$ (see Figure \ref{fig:main}).
	By definition of $(\rho^\mathrm{main},j^\mathrm{main})$,
	\begin{align*}
	\int_{(0,1)\times\mathbb{R}^d}\frac{1}{\rho^\mathrm{main}}|j^\mathrm{main}|^2 &= \int_{(\tau,1-\tau)\times(Q\cap\{x_1>\delta\}\setminus A_r)}\frac{1}{\tilde{s}}|\tilde{q}|^2 + \int_{(\tau,1-\tau)\times A_r}\frac{1}{\tilde{s}+s}|\tilde{q}+q|^2,
	\end{align*}
	and hence by \eqref{ao111}, by $|s|\leq1/2$ and by $|\tilde{s}-1|\lesssim E^\frac{1}{2} \ll 1$, cf.\ \eqref{ao62},
	\begin{align*}
		\int_{(0,1)\times\mathbb{R}^d}\frac{1}{\rho^\mathrm{main}}|j^\mathrm{main}|^2 - \int_{(\tau,1-\tau)\times(Q\cap\{x_1>\delta\})}\frac{1}{\tilde{s}}|\tilde{q}|^2 &\lesssim \int_{(\tau,1-\tau)\times A_r}(|\tilde{q}|^2+|q|^2)\\
		& \lesssim \int_{A_r}|\nabla\tilde\phi|^2 + E^\frac{d+2}{d+1}.
	\end{align*}
	Combining this with (\ref{ao111}) and $\int_{A_r}|\nabla\tilde\phi|^2 \lesssim r(E+\delta^2)$ (obtained by estimates such as (\ref{ao06}) in all directions), recalling that $r\lesssim E^\frac{1}{d+1}$, and absorbing all the higher-order terms, we reach the desired
	\begin{align}\label{ao108}
	\int_{(0,1)\times\mathbb{R}^d}\frac{1}{\rho^\mathrm{main}}|j^\mathrm{main}|^2 -\int_{Q\cap\{x_1>\delta\}}|\nabla\tilde\phi|^2 \lesssim \tau (E+\delta^2)+(E+\delta^2)^\frac{d+2}{d+1}.
	\end{align}

	\begin{figure}[htbp]
		\begin{center}
			\includegraphics[width=125mm]{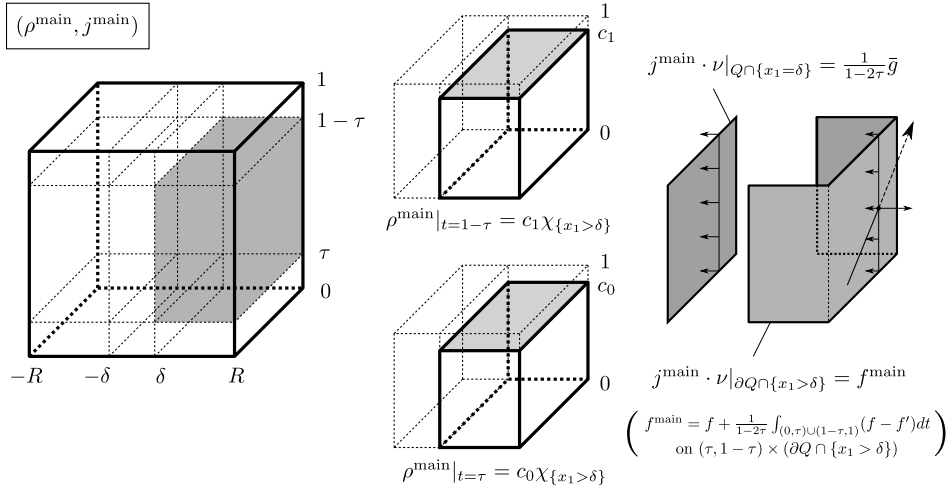}
			\caption{Main construction.}
	    \label{fig:main}
		\end{center}
	\end{figure}

	{\em Step 5: Singular construction.}
	The singular construction $(\rho^\mathrm{sing},j^\mathrm{sing})$ is then taken for accommodating the flux $f^\mathrm{main}$ through $(0,1)\times\partial Q$, cf.\ \eqref{ao63}, to the desired one $f-f'$.
	More precisely, we define the measure $(\rho^\mathrm{sing},j^\mathrm{sing})$ concentrated on $(0,1)\times\partial Q$ (in fact on $(0,1)\times(\partial Q\cap\{x_1\geq\delta\})$) through the density
	\begin{align*}
	\rho^\mathrm{sing}:=
	\begin{cases}
		-\int_0^t(f-f') & \mbox{for}\ t\in(0,\tau),\\
		\frac{t+\tau-1}{1-2\tau}\int_0^\tau(f-f')+\frac{t-\tau}{1-2\tau}\int_{1-\tau}^1(f-f') &
		\mbox{for}\ t\in(\tau,1-\tau),\\
		\int_t^1(f-f') & \mbox{for}\ t\in(1-\tau,1),
	\end{cases}
	\end{align*}
	which is made so that $\rho^\mathrm{sing}|_{t=0,1}=0$, $\rho^\mathrm{sing}\ge 0$, cf.\ \eqref{ao51},
	and $\partial_t\rho^\mathrm{sing}=f^\mathrm{main}-(f-f')$ on $(0,1)\times\partial Q$ (see Figure \ref{fig:singular}).
	Then it distributionally solves the continuity equation with the everywhere-vanishing flux $j^\mathrm{sing}:=0$ in $(0,1)\times Q$, with flux $(f-f')-f^\mathrm{main}$ across $(0,1)\times\partial Q$, so that this extra construction comes at no cost, cf.\ \eqref{ao96}:
	\begin{equation}\label{ao109}
		\int_{(0,1)\times\mathbb{R}^d} \frac{1}{\rho^\mathrm{sing}}|j^\mathrm{sing}|^2=0.
	\end{equation}

	\begin{figure}[htbp]
		\begin{center}
			\includegraphics[width=125mm]{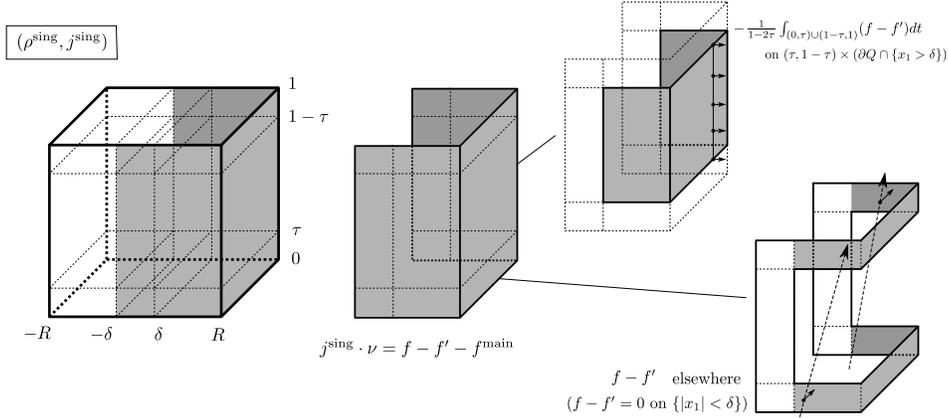}
			\caption{Singular construction.}
	    \label{fig:singular}
		\end{center}
	\end{figure}

	{\em Step 6: Boundary construction.}
	We now turn to the boundary construction in
	$(\tau,1-\tau)\times(Q\cap\{x_1<\delta\})$; we recall that it connects
	the density $\chi_{\Omega_0\cap\{x_1<\delta\}}$ at $t=\tau$ to
	the density $\chi_{\Omega_1\cap\{x_1<\delta\}}$ at $t=1-\tau$,
	has constant-in-$t$ normal flux $-\frac{1}{1-2\tau}\bar g$
	across $(\tau,1-\tau)\times(Q\cap\{x_1=\delta\})$,
	and no normal flux across $(\tau,1-\tau)\times(\partial Q\cap\{x_1<\delta\})$.
	The boundary construction is explicitly given by shearing in the normal direction $x_1$:
	\begin{align*}
	(\rho^\mathrm{bdry},j^\mathrm{bdry})(t,(x_1,x')) :=
	\begin{cases}
		(1,-\frac{\bar g(x')}{1-2\tau} e_1) & \mbox{for}\ x_1>\frac{t-\tau}{1-2\tau}g_1(x')+\frac{1-\tau-t}{1-2\tau}g_0(x'),\\
		(0,0) &\mbox{else},
	\end{cases}
	\end{align*}
	which in view of the definition of $\bar g$, cf.~(\ref{ao84}), distributionally satisfies the continuity equation 
	and obviously the desired flux boundary condition (see Figure \ref{fig:boundary}).
	It satisfies the desired initial and terminal conditions by (\ref{ao17}),
	and we have
	\begin{align}\label{ao110}
	\int_{(0,1)\times\mathbb{R}^d}\frac{1}{\rho^\mathrm{bdry}}|j^\mathrm{bdry}|^2 = \frac{1}{1-2\tau}\int_{Q'}\big(\frac{g_0+g_1}{2}-\delta\big)\bar g^2
	\stackrel{\eqref{ao02}}{\lesssim}\delta^3.
	\end{align}

	\begin{figure}[htbp]
		\begin{center}
			\includegraphics[width=125mm]{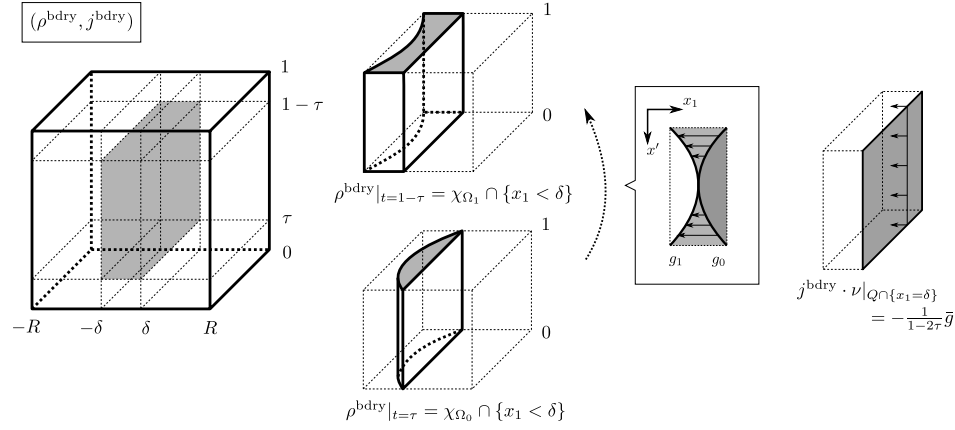}
			\caption{Boundary construction.}
	    \label{fig:boundary}
		\end{center}
	\end{figure}

	{\em Step 7: Initial and terminal construction.}
	The remainder of the proof is devoted to the initial and terminal construction;
	by symmetry, we restrict to the initial construction.
	The initial construction $(\rho^\mathrm{ini},j^\mathrm{ini})$ lives in $(0,\tau)\times Q$ and is defined by the (Eulerian) optimal transport from $(\chi_{\Omega_0}-\rho_0)|_Q$ to $(\chi_{\Omega_0\cap\{x_1<\delta\}}+c_0\chi_{\{x_1>\delta\}})|_Q$ rescaled-in-$t$ from $(0,1)$ to $(0,\tau)$ (see Figure \ref{fig:initial_terminal}).
	The no-flux condition follows since $Q$ is convex.
	In what follows we will verify that
	\begin{align}\label{ao106}
	\int_{(0,1)\times\mathbb{R}^d}\frac{1}{\rho^\mathrm{ini}}|j^\mathrm{ini}|^2 \lesssim \big(\tau+\frac{(M+\delta)^\frac{1}{d-1}}{\tau}\big)E,
	\end{align}
	dividing the proof into Steps 7-1, 7-2, and 7-3.

	\begin{figure}[htbp]
		\begin{center}
			\includegraphics[width=125mm]{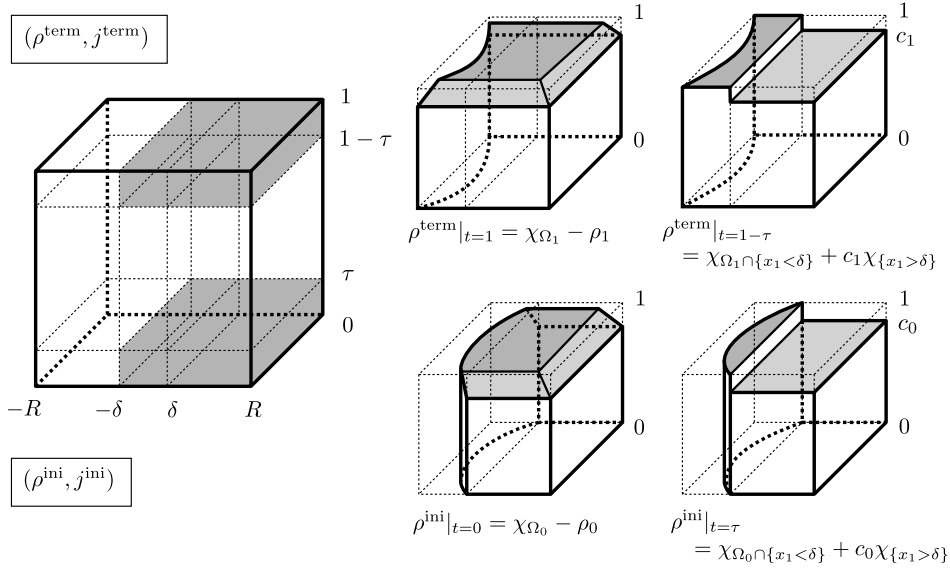}
			\caption{Initial and terminal construction.}
	    \label{fig:initial_terminal}
		\end{center}
	\end{figure}

	{\em Step 7-1: Estimate of the cost by using a Poisson equation.}
	Estimate \eqref{ao106} is based on the following observation:
	\begin{align}\label{ao18}
	\int_{(0,1)\times\mathbb{R}^d}\frac{1}{\rho^\mathrm{ini}}|j^\mathrm{ini}|^2 \lesssim\frac{1}{\tau}\int_{\Omega_0}|\nabla\phi_0|^2,
	\end{align}
	where $\nabla\phi_0$ solves the Neumann problem (solvable by (\ref{ao59})):
	\begin{align}\label{ao25}
		\begin{cases}
			\Delta\phi_0 = (1-c_0)\chi_{\{x_1>\delta\}}-\rho_0 & \mbox{in}\ \Omega_0\cap Q,\\
			\nu\cdot\nabla\phi_0 = 0 & \mbox{on}\ \partial(\Omega_0\cap Q).
		\end{cases}
	\end{align}
	Indeed, if $W_Q$ denotes the Wasserstein distance on the (convex) set $Q$, i.e., if for densities $\mu_0$ and $\mu_1$ of same mass on $Q$ we let $W_Q^2(\mu_0,\mu_1)$ denote the cost of the optimal transport map between $\mu_0|_Q$ and $\mu_1|_Q$, cf.\ \eqref{ao36}, then by definition of the initial construction we have
	\begin{align}\label{ao22}
	\int_{(0,1)\times\mathbb{R}^d}\frac{1}{\rho^\mathrm{ini}}|j^\mathrm{ini}|^2 = \frac{1}{\tau}W_Q^2\big(\chi_{\Omega_0}-\rho_0\,,
	\,\chi_{\Omega_0\cap\{x_1<\delta\}}+c_0\chi_{\{x_1>\delta\}}\big).
	\end{align}
	This entails (\ref{ao18}) by two general observations: The first observation is that
	for arbitrary densities $\mu_0$, $\mu_1$ on $Q$ of the same mass we have
	\begin{align}\label{ao20}
	W_Q(\mu_0,\mu_1)\le\frac{1}{\sqrt{2}-1} W_Q(\mu_0+\mu_1,2\mu_1).
	\end{align}
	Inequality (\ref{ao20}) follows from the scaling of $W_Q$ in the mass,
	its triangle inequality, and its sub-additivity:
	\begin{align*}
	W_Q(\mu_0,\mu_1) &= \frac{1}{\sqrt{2}}W_Q(2\mu_0,2\mu_1)\nonumber\\
	&\le \frac{1}{\sqrt{2}}\big(W_Q(2\mu_0,\mu_0+\mu_1)+W_Q(\mu_0+\mu_1,2\mu_1)\big)\nonumber\\
	&\le \frac{1}{\sqrt{2}}\big(W_Q(\mu_0,\mu_1)+W_Q(\mu_0+\mu_1,2\mu_1)\big).
	\end{align*}
	The second observation is that for any smooth open set $\Omega_0$
	and same generic densities $\mu_{0}$, $\mu_{1}$ (uniformly positive on $\Omega_0\cap Q$),
	\begin{align}\label{ao19}
	W_Q^2(\mu_{0},\mu_{1})\le\frac{1}{\inf_{\Omega_0\cap Q}\min\{\mu_{0},\mu_{1}\}}
	\int_{\Omega_0\cap Q}|\nabla\bar\phi|^2,
	\end{align}
	where $\nabla\bar\phi$ solves the Neumann problem:
	\begin{equation}\label{ao21}
		\begin{cases}
			\Delta\bar\phi=\mu_{0}-\mu_{1} & \mbox{in}\ \Omega_0\cap Q,\\
			\nu\cdot\nabla\bar\phi=0 & \mbox{on}\ \partial(\Omega_0\cap Q).
		\end{cases}
	\end{equation}
	This follows by appealing to the Eulerian formulation of the Wasserstein distance and
	choosing
	\begin{align*}
	(\bar{\rho},\bar{j})=
	\begin{cases}
		(t\mu_{1}+(1-t)\mu_{0},\nabla\bar\phi) & \mbox{on}\ \Omega_0\cap Q,\\
		(0,0) & \mbox{else},
	\end{cases}
	\end{align*}
	so that the (distributional) continuity equation subject to $\bar{\rho}(0,\cdot)=\mu_{0}$ and $\bar{\rho}(1,\cdot)=\mu_{1}$ is a direct consequence of (\ref{ao21}), and hence
	$$W_Q^2(\mu_{0},\mu_{1})\leq\int_{(0,1)\times\mathbb{R}^d}\frac{1}{\bar{\rho}}|\bar{j}|^2.$$
	Starting from (\ref{ao20}) and applying (\ref{ao19})
	with $(\mu_{0},\mu_{1})$ replaced by $(\mu_0+\mu_1,2\mu_1)$
	yield
	\begin{align*}
	W_Q^2(\mu_0,\mu_1) \leq \big(\frac{1}{\sqrt{2}-1}\big)^2\frac{1}{\inf_{\Omega_0\cap Q}\mu_1}
	\int_{\Omega_0\cap Q}|\nabla\bar{\phi}|^2,
	\end{align*}
	with $\bar{\phi}$ still defined as in (\ref{ao21}), as long as $\mu_0$ and $\mu_1$ are supported on $\Omega_0\cap Q$ and of same mass there.
	Applying this to $\mu_0:=\chi_{\Omega_0}-\rho_0$ and $\mu_1:=\chi_{\Omega_0\cap\{x_1<\delta\}}+c_0\chi_{\{x_1>\delta\}}$ both restricted to $Q$, and hence $\phi_0$ playing the role of $\bar{\phi}$, we see that (\ref{ao22}) turns into (\ref{ao18}), where we also use (\ref{ao62}).


	{\em Step 7-2: Elliptic estimates.}
	In order to estimate the r.h.s.~of (\ref{ao18}), we need an elementary
	but perhaps somewhat un-usual elliptic estimate on (\ref{ao25}), which capitalizes
	on the concentration of $\rho_0$ (which we extend trivially on $\mathbb{R}^d$)
	near $\partial Q$, namely
	\begin{align}\label{ao28}
	\int_{Q\cap\Omega_0}|\nabla\phi_0|^2\lesssim
	\big(\int_{\partial Q}|\bar\rho_0|^\frac{2(d-1)}{d}\big)^\frac{d}{d-1}
	+\int_{Q}{\rm dist}^2(\cdot,\partial Q)\rho_0^2,
	\end{align}
	where for any $x\in\partial Q$, $\bar\rho_0(x)$ is the integral of $|\rho_0|$ along
	the line perpendicular to $\partial Q$ in $x$ (non-negativity of $\rho_0$ is not
	needed for this linear estimate), more precisely, along the segment $I_x:=\{y\in Q \mid |y-x|={\rm dist}(y,\partial Q)\}$, i.e., consisting of those
	points that have $x$ as their (orthogonal) projection onto $\partial Q$.
	In case of $d=2$, the exponent $\frac{2(d-1)}{d}$, which would be $=1$, has to be replaced
	by an (in fact, any) exponent $q>1$. For later
	reference we retain the elementary inequality
	\begin{align}\label{ao29}
	\frac{1}{2}\bar\rho_0(x)^2\le\sup_{I_x}|\rho_0|\int_{I_x}{\rm dist}(\cdot,\partial Q)|\rho_0|.
	\end{align}
	For the convenience of the reader, we give the argument for (\ref{ao29}) (for general $\rho_0$):
	Writing $\rho_0=|\rho_0|$, we may assume $\rho_0\ge 0$; by homogeneity, we may assume $\sup_{I_x}\rho_0=1$.
	By an extremality argument we may then assume $\rho_0\in\{0,1\}$, so that we
	are dealing with a characteristic function of a subset of $I_x$. It is clear that
	the configuration that minimizes $\int_{I_x}{\rm dist}(\cdot,\partial Q)\rho_0$
	comes from an interval adjacent to the boundary of $I_x$, so that (\ref{ao29})
	follows from an explicit calculation.

	We now turn to the argument for (\ref{ao28}):
	Since we may without loss of generality assume that
	$\int_{Q\cap\{x_1>\delta\}}\phi_0$ $=0$, we obtain from testing (\ref{ao25}):
	\begin{align*}
	\int_{Q\cap\Omega_0}|\nabla\phi_0|^2=\int_{Q\cap\Omega_0}\phi_0\rho_0.
	\end{align*}
	It is convenient to have an extension $\zeta$ of $\phi_0$ on all of $\mathbb{R}^d$, but
	supported in $2Q$, at hand
	with $\int_{\mathbb{R}^d}|\nabla\zeta|^2\lesssim\int_{Q\cap\Omega_0}|\nabla\phi_0|^2$, which exists
	since $Q\cap\partial\Omega_0$ is in particular locally a Lipschitz graph with constant $\le 1$. Hence it
	suffices to establish
	\begin{align*}
	\big|\int_{\mathbb{R}^d}\zeta\rho_0\big|^2\lesssim\Big(\big(\int_{\partial Q}
	|\bar\rho_0|^\frac{2(d-1)}{d}\big)^\frac{d}{d-1}
	+\int_{Q}{\rm dist}^2(\cdot,\partial Q)\rho_0^2\Big)
	\int_{\mathbb{R}^d}|\nabla\zeta|^2,
	\end{align*}
	which amounts to an estimate of $\rho_0$ in $H^{-1}$.
	For this, it is convenient to split the cube $Q$ into $2d$ pyramids consisting
	of those points closest
	to one of the faces; we split $\rho_0$ into $2d$ pieces $\rho$ accordingly.
	Hence it is enough to replace $Q$ by the half space $\{(x_1,x') \mid x_1>0\}$:
	\begin{align*}
	\big|\int_{\{x_1>0\}}\zeta\rho\big|^2\lesssim
	\Big(\big(\int_{\mathbb{R}^{d-1}}|\bar\rho|^\frac{2(d-1)}{d}\big)^\frac{d}{d-1}
	+\int_{\{x_1>0\}}x_1^2\rho^2\Big)
	\int_{\mathbb{R}^d}|\nabla\zeta|^2,
	\end{align*}
	where the bar still denotes the integral in normal direction, which now means $\bar\rho(x')=\int_0^\infty\rho(x_1,x') dx_1$.
	This is best seen by splitting the l.h.s.~as
	\begin{align*}
	\int_{\{x_1>0\}}\zeta\rho=\int_{\mathbb{R}^{d-1}}\zeta(0,\cdot)\bar\rho
	+\int_{\{x_1>0\}}(\zeta-\zeta(0,\cdot))\rho.
	\end{align*}
	The first r.h.s.~term is estimated as desired by H\"older's inequality and by the Sobolev trace inequality on $\zeta$ in form of
	$\big(\int_{\mathbb{R}^{d-1}}|\zeta(0,\cdot)|^\frac{2(d-1)}{d-2})^\frac{d-2}{2(d-1)}$
	$\lesssim(\int_{\mathbb{R}^d}|\nabla\zeta|^2)^\frac{1}{2}$; in the case of $d=2$ we use that for any $p<\infty$,
	$\big(\int_{\mathbb{R}^{d-1}}|\zeta(0,\cdot)|^p)^\frac{1}{p}$
	$\lesssim(\int_{\mathbb{R}^d}|\nabla\zeta|^2)^\frac{1}{2}$, making use of the fact that $\zeta$ is supported in $2Q$.
	The second r.h.s.~term can be rewritten
	as
	$$\int_{\{x_1>0\}}(\zeta-\zeta(0,\cdot))\rho = \int_{\{x_1>0\}}\rho\int_0^{x_1}\partial_1\zeta = \int_{\{x_1>0\}}\partial_1\zeta\int_{x_1}^\infty\rho,$$
	so that the desired estimate follows from the Cauchy-Schwarz inequality on $\{x_1>0\}$
	and Hardy's inequality in form of
	\begin{align*}
	\int_0^\infty\big(\int_{x_1}^\infty\rho\big)^2dx_1
	\le 4\int_0^\infty x_1^2\rho^2dx_1.
	\end{align*}


	{\em Step 7-3: Completion of the estimate for the initial construction.}
	Equipped with (\ref{ao28}), we are now in a position to prove (\ref{ao106}).
	The second contribution to (\ref{ao28}) is easily estimated: By the $L^\infty$-bound
	(\ref{ao68}) in conjunction with definition (\ref{ao45}) and ${\mathcal T}_+'$
	$\subset{\mathcal T}_+$ we have
	\begin{align*}
	{\rm supp}\rho_0\subset\{{\rm dist}(\cdot,\partial Q)\le M\}.
	\end{align*}
	Combining this with (\ref{ao44}), we have
	\begin{align*}
	\int_Q{\rm dist}^2(\cdot,\partial Q)\rho_0^2\lesssim M E.
	\end{align*}
	The first contribution in (\ref{ao28}) requires more care; we split
	$\rho_0$ into $\rho_0'$ and $\rho_0-\rho_0'$, cf.~(\ref{ao64}). On the contribution
	from $\rho_0'$, or rather its integral $\bar\rho_0'$ in the normal direction to $\partial Q$ (defined as the above $\bar\rho_0$),
	we use H\"older's inequality:
	\begin{align*}
	\big(\int_{\partial Q}(\bar\rho_0')^\frac{2(d-1)}{d}\big)^\frac{d}{d-1}
	\lesssim\int_{\partial Q}(\bar\rho_0')^2
	\stackrel{\eqref{ao29}}{\lesssim}\sup|\rho_0'|\int_Q{\rm dist}(\cdot,\partial Q)|\rho_0'|
	\stackrel{\eqref{ao46}}{\lesssim}\tau^2 E.
	\end{align*}
	We now turn to the contribution from $\rho_0-\rho_0'$ and note that
	by definitions (\ref{ao45}) and (\ref{ao64}), we have
	\begin{equation*}
		\begin{split}
			\int\zeta(\rho_0-\rho_0')=\int_{{\mathcal T}_+'\setminus{\mathcal T}_+''}\zeta(X(0))\mathbb{P}(dX),\\
			\mbox{with}\quad{\mathcal T}_+'\setminus{\mathcal T}_+''
			\subset\{X\in{\mathcal T}_+ \mid |X_1(t_+)|<\delta\},
		\end{split}
	\end{equation*}
	so that by the $L^\infty$-bound (\ref{ao68}) we obtain
	\begin{align*}
	\begin{array}{c}
	{\rm supp}(\rho_0-\rho_0')\subset\{|x_1| \le M+\delta\}\cap\{{\rm dist}(\cdot,\partial Q)\le M\} \\[1ex]
	\mbox{and thus}\quad
	{\rm supp}(\overline{\rho_0-\rho_0'})\subset\{|x_1|\le M+\delta\},
	\end{array}
	\end{align*}
	where $\overline{\rho_0-\rho_0'}$ denotes the integral of $\rho_0-\rho_0'$ in the normal direction (as above).
	This allows us to use H\"older's inequality in the following form
	\begin{align*}
	\lefteqn{\big(\int_{\partial Q}\overline{\rho_0-\rho_0'}^\frac{2(d-1)}{d}\big)^\frac{d}{d-1}
	\lesssim(M+\delta)^\frac{1}{d-1}\int_{\partial Q}\overline{\rho_0-\rho_0'}^2}\\
	&\le(M+\delta)^\frac{1}{d-1}\int_{\partial Q}\bar\rho_0^2
	\stackrel{\eqref{ao29},\eqref{ao44}}{\lesssim}
	(M+\delta)^\frac{1}{d-1}E.
	\end{align*}
	Combining all three contributions we obtain
	\begin{align*}
	\int_{Q\cap\Omega_0}|\nabla\phi_0|^2\lesssim(\tau^2+(M+\delta)^\frac{1}{d-1})E,
	\end{align*}
	which in turn we insert in (\ref{ao18}) to end up with \eqref{ao106}.

	Finally, we notice that the local competitor $(\tilde{\rho},\tilde{j})$ of the form \eqref{ao103} is admissible in the sense of \eqref{ao101}, and then deduce \eqref{ao78} from \eqref{ao107}, \eqref{ao108}, \eqref{ao109}, \eqref{ao110} and \eqref{ao106} combined with sub-additivity, completing the proof.
\end{proof}

\section{From harmonic approximation to $C^{1,\alpha}$-regularity}\label{sect:lagrangian}

In this final section we complete the proof of Theorem \ref{thm:epsilonregularity}, demonstrating that the harmonic approximation of Proposition \ref{Prharm} implies the boundary $C^{1,\alpha}$-regularity via the steps in Section \ref{sect:preliminaries}.
Throughout this section we use the same notations as in Section \ref{sect:preliminaries}.

\subsection{One-step improvement result}

\begin{proof}[Proof of Proposition \ref{prop:onestep}]
	Fix any $\Omega_0$, $\Omega_1$, $T$, and $R$ satisfying the conditions.
	Without loss of generality we may assume that $\nu_0(0)=-e_1$ $(=\nu_1(0))$ by rotation.
	In what follows we fix $\beta\in(0,1)$ and always implicitly assume that $E+D\ll_\beta1$ (and hence $E+D\ll1$ in particular).
	In addition, we use the same notation $C>0$ for all universal constants depending only on $d$ and $\alpha$.

	{\it Step 1: Definition of $b$ and $B$.}
	We define $\bar{b}\in\mathbb{R}^d$ and $\bar{A},\bar{B}\in\mathbb{R}^{d\times d}$ by
	\begin{align}\label{eqn:bAB}
		\bar{b}:=\nabla\phi(0), \quad \bar{A}:=\nabla^2\phi(0), \quad \bar{B}:=e^{-\bar{A}/2},
	\end{align}
	where $\phi$ is as in Proposition \ref{prop:harmoniclagrangian}.
	Notice that $\bar{B}$ is symmetric, i.e., $\bar{B}=\bar{B}^*$ since so is $\bar{A}$.
	By the mean value property of harmonic $\nabla\phi$,
	\begin{equation}\label{ao015}
		|\bar{b}|^2+|\bar{A}|^2=|\nabla\phi(0)|^2+|\nabla^2\phi(0)|^2\lesssim \int_{B_r}|\nabla\phi|^2 \stackrel{\eqref{eqn:harmonicDirichlet}}\lesssim E,
	\end{equation}
	and thus in particular $|\bar{A}|\ll1$ so that $|\bar{B}-Id|^2\lesssim|\bar{A}|^2\lesssim E$; hence,
	\begin{align}\label{ao014}
		|\bar{b}|^2+|\bar{B}-Id|^2\lesssim E\ll1.
	\end{align}
	In addition, $\bar{b}\cdot e_1=0$, i.e., $\bar{b}$ is perpendicular to $e_1$, and moreover $\bar{B}$ has the block structure of $\bar{B}_{i1}=\bar{B}_{1j}=0$ for $i,j\geq2$, where $\bar{B}=(\bar{B}_{ij})$.
	Indeed, $\partial_1\phi(0,x')= 0$ holds for all $(0,x')\in B_r$ by the reflection symmetry (\ref{eqn:harmonicsymmetry}).
	Hence, $\partial_1\phi(0)=0$ so that $\bar{b}\cdot e_1=0$, cf.\ (\ref{eqn:bAB}), and also $\nabla'\partial_1\phi(0)=0$ so that $\bar{A}$ has the desired block structure, from which $\bar{B}$ inherits the same block structure.

	We will see later (in Step 3) that an affine transformation defined by $\bar{b}$ and $\bar{B}$ plays a key role in proving the main estimate (\ref{eqn:onestep2}) as in the interior regularity theory.
	However this transformation generically destroys the well-preparedness of the boundaries.
	In order to recover the well-preparedness we need to modify $\bar{b}$ and $\bar{B}$ to $b$ and $B$ so that not only (\ref{eqn:onestep1}) still holds but also, for later use, the deviation $|B-\bar{B}|^2+|b-\bar{b}|^2$ is super-linearly bounded by $E+D$.

	We first define $b$ by
	\begin{align}\label{ao018}
		b:=\bar{b}+\widetilde{b},
	\end{align}
	taking a vector $\widetilde{b}$ of the form $\ell e_1$ so that $0\in\partial\Omega_0\cap\partial(\Omega_1-b)$; namely, we take $\ell:=g_1(\bar{b}')$, where $g_1$ denotes the graph representation of $\partial\Omega_1$ near the origin, and $\bar{b}'\in\R^{d-1}$ denotes the last $(d-1)$-components of $\bar{b}$.
	Then we have the super-linear estimate\footnote{Rigorously speaking, $[\nabla'g_1]_{\alpha,B_1}$ is interpreted as the supremum of $\frac{|\nabla g_1(x')-\nabla g_1(y')|}{|x'-y'|^\alpha}$ over all $x'\neq y' \in \mathbb{R}^{d-1}$ contained in the projection of $\mathrm{Graph}(g_1)\cap B_1$ to the plane $\{x\cdot e_1=0\}\simeq\mathbb{R}^{d-1}$.}
	\begin{align}\label{eqn:onestep4}
		|\widetilde{b}|^2=|g_1(\bar{b}')|^2\leq([\nabla'g_1]_{\alpha,B_1}|\bar{b}|^{1+\alpha})^2 \stackrel{\eqref{ao014}}{\lesssim} DE^{1+\alpha}\leq(E+D)^{2+\alpha},
	\end{align}
	and, since $|b|^2=|\bar{b}|^2+|\widetilde{b}|^2$, we get in particular
	\begin{align}\label{eqn:onestep10}
		|b|^2\lesssim E+D.
	\end{align}

	We then define $B$ by
	\begin{align}\label{ao017}
		B:=\widetilde{B}\bar{B},
	\end{align}
	seeking a certain matrix $\widetilde{B}$ to make the normals of $\hat{\Omega}_0$ ($=B^{-*}\Omega_0$) and $\hat{\Omega}_1$ ($=B(\Omega_1-b)$) at the origin parallel in the following way.
	Since the normals are transformed by the cofactor matrices under affine changes of variables, the normals of $\bar{B}(\Omega_1-b)$ and $\bar{B}^{-*}\Omega_0$ are parallel to $\bar{B}^{-*}\nu_1(b)$ and $\bar{B}\nu_0(0)$, respectively, and thus in general not parallel to each other.
	Note however that because of the above-mentioned block structure of $\bar{B}$ and the well-preparedness, $\bar{B}^{-*}\nu_1(0)=-\bar{B}^{-*}e_1$ and $\bar{B}\nu_0(0)=-\bar{B}e_1$ are parallel to $-e_1$, so that
	\begin{align}
		&\Big|\frac{\bar{B}^{-*}\nu_1(b)}{|\bar{B}^{-*}\nu_1(b)|}-\frac{\bar{B}\nu_0(0)}{|\bar{B}\nu_0(0)|}\Big|^2=\Big|\frac{\bar{B}^{-*}\nu_1(b)}{|\bar{B}^{-*}\nu_1(b)|}-\frac{\bar{B}^{-*}\nu_1(0)}{|\bar{B}^{-*}\nu_1(0)|}\Big|^2 \nonumber\\
		&\stackrel{\eqref{ao014}}{\lesssim} |\nu_1(b)-\nu_1(0)|^2 \lesssim ([\nabla'g_1]_{\alpha,1}|b|^{\alpha})^2\stackrel{\eqref{eqn:onestep10}}{\lesssim} D(E+D)^\alpha. \label{eqn:onestep9}
	\end{align}
	Hence it suffices to find some matrix $\widetilde{B}$ such that $\widetilde{B}^{-*}\bar{B}^{-*}\nu_1(b)$ ($=B^{-*}\nu_1(b)$) and $\widetilde{B}\bar{B}\nu_0(0)$ (=$B\nu_0(0)$) are parallel, and that is close to the identity in the super-linear sense of
	\begin{align}\label{eqn:onestep5}
		|\widetilde{B}-Id|^2\lesssim (E+D)^{1+\alpha}.
	\end{align}
	We restrict ourselves to constructing a matrix $\widetilde{B}$, the square $\widetilde{A}:=\widetilde{B}^2$ of which is symmetric and to satisfy $\widetilde{A}\frac{\bar{B}\nu_0(0)}{|\bar{B}\nu_0(0)|}=\frac{\bar{B}^{-*}\nu_1(b)}{|\bar{B}^{-*}\nu_1(b)|}$.
	Since within the space of symmetric matrices, in a small neighborhood of the identity matrix, the square root is well defined and a Lipschitz operation, for (\ref{eqn:onestep5}) it is enough to construct such a symmetric matrix with $|\widetilde{A}-Id|^2\lesssim(E+D)^{1+\alpha}$.
	This is easily done: We think of $\mathbb{R}^d$ as orthogonal sum of the space spanned by $\frac{\bar{B}\nu_0(0)}{|\bar{B}\nu_0(0)|}=-e_1$ and its complement, and of $\widetilde{A}$ as a corresponding symmetric block matrix: On the one-dimensional space, $\widetilde{A}$ is defined as required, which by symmetry determines $\widetilde{A}$ up to the $(d-1)$-dimensional diagonal block, where $\widetilde{A}$ is set to be the identity: Since $|\widetilde{A}e_1-e_1|^2\lesssim D(E+D)^\alpha$ by (\ref{eqn:onestep9}), this closeness to the identity translates to the entire matrix $\widetilde{A}$.
	We deduce from (\ref{ao014}) and (\ref{eqn:onestep5}) that $|B-Id|^2\lesssim E+D$, which together with (\ref{eqn:onestep10}) implies (\ref{eqn:onestep1}).

	{\it Step 2: Well-preparedness of $\hat{T}$.}
	The optimality of $\hat{T}$ follows from the general affine invariance.
	We also have $\hat{\lambda}=\lambda|\det B|^{-2}\in[1/4,4]$ since $|\lambda-1|^2\lesssim E\ll1$ by Lemma \ref{lem:value} and since $||\det B|-1|^2\lesssim|B-Id|^2\lesssim E+D\ll1$ by (\ref{eqn:onestep1}).
	In addition, by definition of $B$ and $b$ in Step 1, the open sets $\hat{\Omega}_0$ and $\hat{\Omega}_1$ satisfy the tangency condition (\ref{eqn:tangency}) at the origin, and moreover for any fixed $\theta\in(0,1/2)$ (which we will fix later) the topological condition (\ref{eqn:topological}) holds in $B_{\theta}$ by the $L^\infty$-bounds in Proposition \ref{prop:Linfty}, provided that $E+D\ll_{\theta}1$; this smallness will be satisfied since $E+D\ll_\beta1$ and $\theta$ will only depend on $\beta$ (next to $d$, $\alpha$).

	{\it Step 3: Estimate for $\hat{E}$.}
	We now prove the main estimate (\ref{eqn:onestep2}), provided that $E+D\ll_{\theta}1$ for a given $\theta\in(0,r/4)$ which we fix later, where $r$ denotes the radius in Proposition \ref{prop:harmoniclagrangian}.
	By $|B-Id|\ll_\theta1$, cf.\ (\ref{eqn:onestep1}), we in particular have $|B|\lesssim 1$, $|\det B^{-*}|\lesssim 1$ and $B^*(B_\theta)\subset B_{2\theta}$; hence,
	\begin{align*}
		\hat{E} &=\frac{1}{\theta^2|B_\theta|}\int_{B_\theta}|\hat{T}-\hat{x}|^2\chi_{\hat{\Omega}_0} \\
		& \lesssim \theta^{-(d+2)}\int_{B^*(B_\theta\cap\hat{\Omega}_0)}|B(T-b)-B^{-*}x|^2|\det B^{-*}|\\
		& \lesssim \theta^{-(d+2)}\int_{B_{2\theta}\cap\Omega_0}|T-b-(B^*B)^{-1}x|^2.
	\end{align*}
	In view of the triangle inequality this is bounded above by
	\begin{align}
		&\theta^{-(d+2)}\int_{B_{2\theta}\cap\Omega_0}(|T-x-\nabla\phi|^2+|\nabla\phi-\bar{b}-\bar{A}x|^2 \nonumber\\
		& \qquad\qquad +|x+\bar{A}x-\bar{B}^{-2}x|^2+|\bar{B}^{-2}x-(B^*B)^{-1}x|^2+|\bar{b}-b|^2). \label{eqn:fiveterms}
	\end{align}
	We now estimate these five terms.
	By Proposition \ref{prop:harmoniclagrangian}, for any $\varepsilon\in(0,1)$, if $E+D\ll_{\varepsilon}1$, the first term is bounded as
	$$\theta^{-(d+2)}\int_{B_{2\theta}\cap\Omega_0}|T-x-\nabla\phi|^2 \lesssim \theta^{-(d+2)}(\varepsilon E+\frac{1}{\varepsilon}D),$$
	since $B_{2\theta}\subset B_{r}$.
	Next, in view of the definition \eqref{eqn:bAB}, by Taylor's estimate the second term is bounded as
	\begin{align*}
		\theta^{-(d+2)}\int_{B_{2\theta}\cap\Omega_0}|\nabla\phi-\bar{b}-\bar{A}x|^2
		\lesssim \theta^{-(d+2)}\sup_{B_{2\theta}}|\nabla^3\phi|^2\int_{B_{2\theta}\cap\Omega_0}|x|^{4}
		\lesssim \theta^{2}E,
	\end{align*}
	where in the last estimate, noting that $B_{2\theta}\subset B_{r/2}$, we again used the mean-value property and (\ref{eqn:harmonicDirichlet}) for obtaining $\sup_{B_{2\theta}}|\nabla^3\phi|^2\lesssim\int_{B_r}|\nabla\phi|^2\lesssim E$.
	The third term is bounded as
	\begin{align*}
		\theta^{-(d+2)}\int_{B_{2\theta}\cap\Omega_0}|x+\bar{A}x-\bar{B}^{-2}x|^2
		\lesssim \theta^{-(d+2)}\int_{B_{2\theta}\cap\Omega_0}|\bar{A}|^4|x|^2
		\lesssim  E^2,
	\end{align*}
	because $|\bar{A}|^2\lesssim E\ll1$ by (\ref{ao015}) and hence $|Id+\bar{A}-\bar{B}^{-2}|=|e^{\bar{A}}-Id-\bar{A}|\lesssim |\bar{A}|^2$.
	Concerning the fourth term, noting that all the matrices are regular and their norms are comparable to $1$, cf.\ (\ref{eqn:onestep1}), (\ref{ao014}), and (\ref{eqn:onestep5}), we obtain the bound that
	\begin{align*}
		\theta^{-(d+2)}\int_{B_{2\theta}\cap\Omega_0}|\bar{B}^{-2}x-(B^*B)^{-1}x|^2
		&\lesssim |\bar{B}^{-2}-(B^*B)^{-1}|^2
		\lesssim |B^*B-\bar{B}^2|^2\\ 
		&\stackrel{\eqref{ao017}}{\lesssim} |\widetilde{B}-Id|^2 \stackrel{\eqref{eqn:onestep5}}{\lesssim} (E+D)^{1+\alpha}.
	\end{align*}
	Finally, the fifth term is bounded as
	\begin{align*}
		\theta^{-(d+2)}\int_{B_{2\theta}\cap\Omega_0}|\bar{b}-b|^2
		\stackrel{\eqref{ao018}}{\lesssim} \theta^{-2}|\tilde{b}|^2 \stackrel{\eqref{eqn:onestep4}}{\lesssim} \theta^{-2}(E+D)^{2+\alpha}.
	\end{align*}
	In summary, keeping the linear terms with respect to $E$ and $D$ and absorbing all super-linear terms into the one with the smallest exponent $(E+D)^{1+\alpha}$, we find that for any $\theta\in(0,r/4)$, if $E+D\ll_{\theta,\varepsilon}1$, then there is $\bar{C}=\bar{C}(d,\alpha)>0$ such that
	\begin{align*}
		\hat{E}=\theta^{-2}\fint_{B_\theta}|\hat{T}-\hat{x}|^2\chi_{\hat{\Omega}_0} \leq \bar{C}\big((\varepsilon\theta^{-(d+2)}+\theta^{2})E+\varepsilon^{-1}\theta^{-(d+2)}D+ (E+D)^{1+\alpha}\big).
	\end{align*}
	Now, we fix $\theta\ll_{\beta}1$ so small that $\bar{C}\theta^{2}\leq\frac{1}{3}\theta^{2\beta}$; this is possible since $\beta<1$.
	Next, we fix $\varepsilon\ll_{\beta}1$ so small that $\bar{C}\varepsilon\theta^{-(d+2)}\leq\frac{1}{3}\theta^{2\beta}$.
	Finally, thanks to $E+D\ll_{\beta}1$, we have
	$$\bar{C}(E+D)^{1+\alpha}\leq\frac{1}{3}\theta^{2\beta}(E+D),$$
	and thus we conclude that if $E+D\ll_{\beta}1$, then
	\begin{align*}
		\hat{E}=\theta^{-2}\fint_{B_\theta}|\hat{T}-\hat{x}|^2\chi_{\hat{\Omega}_0} &\leq \theta^{2\beta}E+C_\beta D,
	\end{align*}
	which implies (\ref{eqn:onestep2}) since $\theta$ and $\varepsilon$ only depend on $\beta$ (and $d$, $\alpha$).

	{\it Step 4: Estimate for $\hat{D}$.}
	Finally we prove (\ref{eqn:onestep8}).
	By definition, this amounts to show that for $i=0,1$,
	\begin{align}\label{eqn:onestep6}
		[\hat{\nu}_i]_{\alpha,B_\theta}\leq (1+C\sqrt{E+D})[\nu_i]_{\alpha,B_1},
	\end{align}
	where $\nu_i$ and $\hat{\nu}_i$ respectively denote the outer unit normal vectors of $\partial\Omega_i$ and $\partial\hat{\Omega}_i$.

	We prove (\ref{eqn:onestep6}) only for $i=0$; the case $i=1$ is similar since the translation by $b$ does not change the H\"{o}lder semi-norm of the boundary (up to which part of the boundary is monitored).
	For notational simplicity, let $\hat{B}:=B^{-*}$.
	Since $|\hat{B}-Id|$ and $\theta$ are small, for any $\hat{x},\hat{y}\in\partial\hat{\Omega}_0\cap B_\theta$ there are unique points $x,y\in\partial\Omega_0\cap B_1$, respectively, such that $\hat{x}=\hat{B}x$ and $\hat{y}=\hat{B}y$.
	For such points we have
	$$|\hat{x}-\hat{y}|^{-\alpha}\leq (1+C\sqrt{E+D})|x-y|^{-\alpha},$$
	since
	$$|x-y|\leq |B^*||\hat{x}-\hat{y}|\leq (1+C\sqrt{E+D})|\hat{x}-\hat{y}|.$$
	Thus (\ref{eqn:onestep6}) is reduced to showing
	\begin{align*}
	\Big|\frac{\hat B\nu_0(x)}{|\hat B\nu_0(x)|}-\frac{\hat B\nu_0(y)}{|\hat B\nu_0(y)|}\Big|
	\le(1+C|\hat B-Id|)|\nu_0(x)-\nu_0(y)|,
	\end{align*}
	which by the triangle inequality follows from
	\begin{align*}
	\Big|\big(\frac{\hat B\nu_0(x)}{|\hat B\nu_0(x)|}-\nu_0(x)\big)
	-\big(\frac{\hat B\nu_0(y)}{|\hat B\nu_0(y)|}-\nu_0(y)\big)\Big|
	\lesssim|\hat B-Id||\nu_0(x)-\nu_0(y)|.
	\end{align*}
	This amounts to the statement that the mapping
	\begin{align*}
	\hat B\mapsto\big(\partial B\ni\nu\mapsto\frac{\hat B\nu}{|\hat B\nu|}\in\partial B\big)
	\end{align*}
	is Lipschitz continuous from a neighborhood of $Id$ with values in the Lipschitz transformations
	of the sphere $\partial B$. The latter is a direct consequence of the (local) smoothness of
	$(\hat B,\nu)\mapsto \frac{\hat B\nu}{|\hat B\nu|}$ and the compactness of $\partial B$.
\end{proof}

\begin{remark}\label{rem:nonunitvalue}
	We now briefly explain why we have to allow the values of the initial and target densities to be different.
	The main reason is to obtain a super-linear type estimate in Step 3 of the above proof.
	In fact, if we had only allowed $\lambda=\hat{\lambda}=1$ in (\ref{eqn:onestep0}), then in order to get the marginal condition $\hat{T}\sharp\chi_{\hat{\Omega}_0}=\chi_{\hat{\Omega}_1}$, we need to take $\hat{T}(\hat{x})=\hat{c}B(T(B^*\hat{x})-b)$ with $\hat{c}:=|\det B|^{-2/d}$ (and $\hat{\Omega}_1=\hat{c}B(\Omega_1-b)$).
	In this case we need to replace the fourth term $|\bar{B}^{-2}x-(B^*B)^{-1}x|^2$ in (\ref{eqn:fiveterms}) by $|\bar{B}^{-2}x-(\hat{c}B^*B)^{-1}x|^2$, and thus the super-linear bound of the form $\lesssim(E+D)^{1+\alpha}$ deteriorates into a linear bound $\lesssim E+D$, since we only have $|\hat{c}-1|^2\lesssim|\det B-1|^2\lesssim|B-Id|^2\lesssim E$.
	The linear bound is not sufficient for our purpose.
\end{remark}

\begin{remark}\label{rem:tangency}
	Here is also a good position to observe that the (qualitative) tangency condition \eqref{eqn:tangency} is not restrictive; more precisely, we need not assume $p\in\partial\Omega_0\cap\partial\Omega_1$ and $\nu_0(p)=\nu_1(p)$ if we instead assume a natural quantitative counterpart.
	To observe this fact, for notational simplicity, we may suppose that $\Omega_0$ and $\Omega_1$ are represented by the epigraphs of $g_0,g_1\in C^{1,\alpha}$ in the $e_1$-direction locally in the unit ball $B_1$ (i.e., $p=0$ and $R=1$), respectively, and also $g_0(0)=0$ and $\nabla'g_0(0)=0$ (but $g_1$ is not qualitatively fixed).
	Now, we assume that in addition to $E\ll1$,
	$$\widetilde{D}:=|g_1(0)|^2+|\nabla' g_1(0)|^2+[\nabla'g_0]_\alpha^2+[\nabla'g_1]_\alpha^2\ll1,$$
	where the first two terms yield the natural ``quantitative'' tangency condition (while the last two correspond to the original $D$).
	Then in particular $|\nu_1(0)-\nu_0(0)|^2\lesssim|\nabla' g_1(0)|^2\ll1$, so that there is a symmetric positive definite matrix $A$ such that $A\nu_0(0)=\nu_1(0)$ and $|A-Id|^2\lesssim|\nabla' g_1(0)|^2\ll1$ (see the last part of Step 1 above).
	Then the transformed map $\hat{T}$ defined in \eqref{eqn:onestep0} with $B:=A^\frac{1}{2}$ and $b=g_1(0)e_1$ satisfies the assumption of Theorem \ref{thm:epsilonregularity} (including the tangency condition) at least in a (slightly) smaller ball, so that the desired assertion holds in a smaller ball.
	This can be translated back to the original map in a similar way, where all the terms in $\widetilde{D}$ also appear in the r.h.s.\ of the last linear estimate in Theorem \ref{thm:epsilonregularity}.
\end{remark}

\subsection{Iteration}

For convenience of the readers we give a complete proof, which is however almost parallel to a part of the proof of \cite[Proposition 3.7]{GoldmanOtto},

\begin{proof}[Proof of Proposition \ref{prop:iteration}]
	The assertions (\ref{eqn:iteration1}) and (\ref{eqn:iteration9}) follow if we prove the following discrete version: For any nonnegative integer $k$ there are $\bar{A}_k$ and $\bar{a}_k$ such that
	\begin{align}\label{eqn:iteration2}
		\frac{1}{(\frac{1}{2}\theta^k)^{d+2+2\alpha}}\int_{B_{\frac{1}{2}\theta^k}}|T-(\bar{A}_kx+\bar{a}_k)|^2\chi_{\Omega_0}\lesssim E+D,
	\end{align}
	and
	\begin{align}\label{eqn:iteration8}
		|\bar{A}_k-Id|^2 + \frac{1}{\theta^{2k}}|\bar{a}_k|^2\lesssim E+D,
	\end{align}
	where $\theta\in(0,1)$ is the constant in Proposition \ref{prop:onestep}.
	In what follows we prove this discrete version by using Proposition \ref{prop:onestep} iteratively.

	{\em Step 1: Inductive argument for iteration via the one-step improvement.}
	Set $\Omega_{0,0}:=\Omega_0$, $\Omega_{1,0}:=\Omega_1$, $T_0:=T$, $\lambda_0=\lambda$.
	We demonstrate that we can inductively define $\Omega_{0,k}$, $\Omega_{1,k}$, $T_k$, $\lambda_k$, $B_k$, $b_k$ by applying Proposition \ref{prop:onestep} to $\Omega_{0,{k-1}}$, $\Omega_{1,{k-1}}$, $\lambda_{k-1}$, $T_{k-1}$ with the exponent $\beta:=(\alpha+1)/2>\alpha$; now all constants depending on $\beta$ are universal (i.e., only depending on $d$ and $\alpha$).
	Notice carefully that for this inductive definition we need to inductively verify the smallness hypothesis \eqref{onestep11} for all $k$.

	We now verify by induction that for all $k$ we have not only the hypothesis \eqref{onestep11} but also the stronger key estimate
	\begin{align}\label{eqn:iteration6}
		E_k+D_k \leq C' \theta^{2\alpha k}(E+D) \ (\ll1),
	\end{align}
	where $E_k:=E(\Omega_{0,k},\Omega_{0,k},T_k,\theta^k)$, $D_k:=D(\Omega_{0,k},\Omega_{0,k},\theta^k)$, and $C'=C'(d,\alpha)\in[1,\infty)$ is defined by
	$$C':= C_1'+C_2' \quad \textrm{with} \quad C_1':=\bar{C}\theta^{-2\beta}\sup_{k\geq1}\big(k\theta^{2(\beta-\alpha)(k-1)}\big), \quad C_2':=\prod_{k=1}^{\infty}(1+\theta^{\alpha k}),$$
	and $\bar{C}:=\max\{C_\beta,C\}$ for $C_\beta$ in \eqref{eqn:onestep2} and $C$ in \eqref{eqn:onestep8}.
	We prove (\ref{eqn:iteration6}) by induction, so suppose that it holds for $k\leq K-1$.
	Then we deduce from (\ref{eqn:onestep2}) and (\ref{eqn:onestep8}) that for $k=1,\dots,K$,
	\begin{align}
		E_k &\leq \theta^{2\beta} E_{k-1} + \bar{C}D_{k-1}, \nonumber\\
		D_k &\leq\theta^{2\alpha}(1+\bar{C}\sqrt{E_{k-1}+D_{k-1}})D_{k-1} \stackrel{\eqref{eqn:iteration6}}{\leq} D_{k-1}, \label{eqn:iteration11}
	\end{align}
	which imply
	\begin{align}
		E_k\leq \theta^{2\beta k}E + \bar{C}k\theta^{2\beta(k-1)}D, \quad D_k \leq D. \label{eqn:iteration5}
	\end{align}
	Hence, in particular, from the first item in \eqref{eqn:iteration5} we obtain
	\begin{equation}\label{eqn:iteration12}
		\theta^{-2\alpha K}E_K\leq \theta^{2(\beta-\alpha)K}E + \bar{C}K\theta^{2(\beta-\alpha)(K-1)}\theta^{-2\beta}D \leq E+ C_1'D.
	\end{equation}
	On the other hand, combining the induction hypothesis, that is, (\ref{eqn:iteration6}) for $k=0,\dots,K-1$, with (\ref{eqn:iteration11}) for $k=1,\dots,K$, and noting that $\bar{C}\sqrt{C'(E+D)}\leq1$ since $E+D\ll1$, we have for $k=1,\dots,K$,
	$$\theta^{-2\alpha k}D_k \leq \theta^{-2\alpha(k-1)}(1+\theta^{\alpha(k-1)})D_{k-1},$$
	and therefore
	\begin{equation}\label{eqn:iteration13}
		\theta^{-2\alpha K}D_K \leq \prod_{k=1}^{K-1}(1+\theta^{\alpha k})D \leq C_2' D.
	\end{equation}
	Summing \eqref{eqn:iteration12} and \eqref{eqn:iteration13}, we obtain (\ref{eqn:iteration6}) for $k=K$.

	{\em Step 2: Iteration argument for the Campanato-type estimate.}
	We finally complete the construction of $\bar{A}_k$ and $\bar{a}_k$ satisfying (\ref{eqn:iteration2}) and (\ref{eqn:iteration8}) by iteration.
	We first notice that by (\ref{eqn:onestep1}) and (\ref{eqn:iteration6}),
	\begin{align}
		|B_k-Id|^2 + \frac{1}{\theta^{2k}}|b_k|^2 \lesssim \theta^{2k\alpha}(E+D), \label{eqn:iteration7}
	\end{align}
	and if we define $A_k:=B_k\cdots B_1$ and $a_k:=\sum_{i=1}^{k}B_k\cdots B_ib_i$, then $\Omega_{0,k}=A_k^{-*}\Omega_0$ and $T_k(x)=A_kT(A_k^*x)-a_k$. 
	Now the geometric estimate (\ref{eqn:iteration7}) implies that
	\begin{align}
		|A_k-Id|^2 + |a_k|^2\lesssim E+D. \label{eqn:iteration10}
	\end{align}
	In particular, since $E+D\ll 1$, $B_{\frac{1}{2}\theta^k}\subset A_k^*B_{\theta^k}$, $|A_k^{-1}|\sim1$ and $|\det A_k^*|\sim1$.
	Therefore, if we set $\bar{A}_k:=A_k^{-1}A_k^{-*}$ and $\bar{a}_k:=A_k^{-1}a_k$ so that $T-(\bar{A}_kx+\bar{a}_k)=A_k^{-1}(T_k(A_k^{-*}x)-A_k^{-*}x)$, we immediately find that (\ref{eqn:iteration10}) translates into (\ref{eqn:iteration8}), and also have
	\begin{align*}
		\frac{1}{(\frac{1}{2}\theta^k)^{d+2}}\int_{B_{\frac{1}{2}\theta^k}}|T-(\bar{A}_kx+\bar{a}_k)|^2\chi_{\Omega_0} &\lesssim \frac{1}{\theta^{k(d+2)}}\int_{A_k^*B_{\theta^k}}|T-(\bar{A}_kx+\bar{a}_k)|^2\chi_{\Omega_0}\\
		&= \frac{1}{\theta^{k(d+2)}}\int_{B_{\theta^k}}|A_k^{-1}(T_k-x)|^2|\det A_k^*|\chi_{\Omega_{0,k}}\\
		&\lesssim  \frac{1}{\theta^{k(d+2)}}\int_{B_{\theta^k}}|T_k-x|^2\chi_{\Omega_{0,k}} \lesssim E_k,
	\end{align*}
	which, combined with (\ref{eqn:iteration6}), implies (\ref{eqn:iteration2}).
\end{proof}

\subsection{$C^{1,\alpha}$-regularity}

\begin{proof}[Proof of Theorem \ref{thm:epsilonregularity}]
	Without loss of generality we may assume $R=1$ and $p=0$. By Campanato's characterization
	of H\"older spaces, see for instance \cite[Definition 1.5]{Giaquinta},
	it is enough to establish for any $p_0\in B_{1/16}\cap\Omega_0$
	\begin{align}\label{wg07}
	\frac{1}{r^{d+2+2\alpha}}\inf_{A,a}\int_{B_r(p_0)\cap\Omega_0}|T-(Ax+a)|^2\lesssim E+D
	\quad\mbox{for}\;0<r\le\frac{1}{8},
	\end{align}
	where the infimum runs over all matrices $A$ and vectors $a$.
	Setting $R:={\rm dist}(p_0,\partial\Omega_0)$, and noting that $R\le\frac{1}{16}$
	because of $0\in\partial\Omega_0$, we first address the (non-empty) range
	of $R\le r\le\frac{1}{8}$ and then the range $0<r\le R$.

	{\em Step 1: Treatment of the range $R\le r\le\frac{1}{8}$ by boundary regularity.}
	Let $q_0\in\partial\Omega_0$ be such that $|p_0-q_0|=R$;
	noting that $q_0\in B_{1/8}$, we have by definition of $D$ that there exists
	$q_1\in\partial\Omega_1$ such that
	\begin{align}\label{wg01}
	|q_1-q_0|^2+|\nu_1(q_1)-\nu_0(q_0)|^2\lesssim D,
	\end{align}
	and thus a matrix $B$ with
	\begin{align}\label{wg02}
	\det B=1,\quad\frac{B^{-*}\nu_1(p_1)}{|B^{-*}\nu_1(p_1)|}=\frac{B\nu_0(q_0)}{|B\nu_0(q_0)|}
	\quad\mbox{and}\quad|B-Id|^2\lesssim D,
	\end{align}
	so that the two domains
	\begin{align*}
	\hat\Omega_0:=B^{-*}(\Omega_0-q_0)\quad\mbox{and}\quad\hat\Omega_1:=B(\Omega_1-q_1)
	\end{align*}
	satisfy the tangency condition (\ref{eqn:tangency}) with respect to $B_{1/2}$. From the smallness
	conditions in (\ref{wg01}) and (\ref{wg02}) we retain
	\begin{align}\label{wg03}
	|q_1-q_0|^2+|B-Id|^2\lesssim D.
	\end{align}
	If $\hat D$ is defined as $D$, cf.~(\ref{wg12}), with $(\Omega_0,\Omega_1,1)$ replaced
	by $(\hat\Omega_0,\hat{\Omega}_1,\frac{1}{2})$ we clearly have
	\begin{align}\label{wg04}
	\hat D\lesssim D.
	\end{align}
	Because of the structure of the affine transformation, the map
	\begin{align*}
	\hat T(\hat x)=B(T(B^*\hat x+q_0)-q_1)
	\end{align*}
	is optimal for $\chi_{\hat\Omega_0}$ and $\chi_{\hat\Omega_1}$.
	Moreover, if $\hat E$ is defined as
	$E$, cf.~(\ref{wg12}), with $(\Omega_0,\Omega_1,T,1)$ replaced
	by $(\hat\Omega_0,\hat\Omega_1,\hat T,\frac{1}{2})$, we obtain from (\ref{wg03})
	\begin{align}\label{wg05}
	\hat E\lesssim E+D.
	\end{align}
	By (\ref{wg03}), the closeness (\ref{eqn:Linfty1})
	of $T$ to the identity, cf.~Proposition \ref{prop:Linfty}, transfers to $\hat T$,
	again at the expense of a (dyadic) loss in the radius:
	\begin{align*}
	\sup_{\hat\Omega_0\cap B_{1/4}}|\hat T-Id|
	+\sup_{\hat\Omega_1\cap B_{1/4}}|\hat T^{-1}-Id|\ll 1.
	\end{align*}
	Hence also the topological condition (\ref{eqn:topological}) is satisfied with respect to $B_{1/2}$.

	We may thus apply Proposition \ref{prop:iteration} to the effect of
	\begin{align*}
	\min_{\hat A,\hat a}\Big(|\hat A-Id|^2+|\hat a|^2
	+\frac{1}{r^{2+2\alpha}}\fint_{B_{4r}\cap\hat\Omega_0}
	|\hat T-(\hat A\hat x+\hat a)|^2\Big)\lesssim \hat E+\hat D,
	\end{align*}
	which, also appealing to (\ref{wg04}) and (\ref{wg05}),
	and with $(\hat A,\hat a)$ $=\big(BAB^*,B(a+Aq_0-q_1)\big)$ translates back to
	\begin{align*}
	\min_{A,a}\Big(|A-Id|^2+|a|^2+\frac{1}{r^{2+2\alpha}}\fint_{B_{2r}(q_0)\cap\Omega_0}
	|T-(Ax+a)|^2\Big)\lesssim E+D.
	\end{align*}
	Because of $2|p_0-q_0|\le r$, this yields the desired
	\begin{align}\label{wg08}
	\lefteqn{\min_{A,a}\Big(|A-Id|^2+|a|^2}\nonumber\\
	&+\frac{1}{r^{2+2\alpha}}\fint_{B_{r}(p_0)\cap\Omega_0}
	|T-(Ax+a)|^2\Big)\lesssim E+D.
	\end{align}

	{\em Step 2: The symmetry of $A$.}
	In preparation of treating the range $0<r\le R$,
	we argue that in (\ref{wg08}) we may assume that for $r=R$, the infimum is taken over
	symmetric $A$.
	Note that by definition of $R$, the integral extends over $B_R(p_0)$.
	If $A^\mathrm{anti}$ denotes the antisymmetric part of $A$ it suffices to show
	\begin{align}\label{wg11}
	|A^\mathrm{anti}|^2\lesssim\frac{1}{R^2}\fint_{B_R(p_0)}|T-(Ax+a)|^2,
	\end{align}
	which will rely on $T$ being a gradient. Since this is the only property of $T$
	we use, we may without loss of generality assume that $R=1$ and $p_0=0$.
	Fixing $\eta\in C_c^\infty(B_1)$ with $\int\eta=1$ and $i,j=1,\cdots,d$,
	we consider the (curl-like) vector field
	$\xi=-(A_{ji}-A_{ij})(\partial_i\eta e_j-\partial_j\eta e_i)$,
	so that on the one hand, $\int\xi\cdot Ax=(A_{ji}-A_{ij})^2$,
	and on the other hand, $\int\xi\cdot(T-a)=0$. This yields (\ref{wg11}) by the Cauchy-Schwarz inequality.

	{\em Step 3: Treatment of the range $0<r\le R$ by interior regularity.}
	We appeal to (\ref{wg08}) for $r=R$. Let us denote by $(A,a)$ a (near) optimizer;
	by Step 2 we may assume that $A$ is symmetric (and positive definite by its
	closeness to $Id$).
	Hence there exists a (symmetric) matrix $B$ and a vector $p_1$ such that
	\begin{align}
	(Id,0)=(BAB^*,a+Ap_0-p_1)\quad\mbox{and}\nonumber\\
	|B-Id|^2+|p_1-p_0|^2\lesssim E+D.\label{wg09}
	\end{align}
	Then the map
	\begin{align*}
	\hat T(\hat x)=B(T(B^*\hat x+p_0)-p_1)
	\end{align*}
	is optimal for $\chi_{\hat\Omega_0}$ and $(\det B)^{-2}\chi_{\hat\Omega_1}$ where
	\begin{align*}
	\hat\Omega_0:=B^{-*}(\Omega_0-p_0)\quad\mbox{and}\quad\hat\Omega_1:=B(\Omega_1-p_1).
	\end{align*}
	Since by definition of $R$, $B_R(p_0)\subset\Omega_0$ and
	thus $B_{R/2}\subset\hat\Omega_0$ in view of (\ref{wg09}), the minimality (\ref{wg08}) implies
	\begin{align}\label{wg08bis}
	\frac{1}{R^{2+2\alpha}}\fint_{B_{R/2}}
	|\hat T-\hat x|^2\lesssim E+D.
	\end{align}
	We now argue that
	\begin{align}\label{wg10}
	B_{R/4}\subset\hat\Omega_1.
	\end{align}
	Indeed, by $(\det B)^{-2}\chi_{\hat\Omega_1}=\hat{T}\sharp\chi_{\hat\Omega_0}$, by using a cut-off function $\eta$ such that $0\leq\eta\leq1$, $\eta\equiv0$ outside $B_{R/2}$, $\eta\equiv1$ on $B_{R/2-s}$, and $|\nabla\eta|\lesssim 1/s$,
	where $0<s\ll R$ to be optimized later, and by (\ref{wg08bis}) we have
	\begin{align*}
		|B_{R/2}|-(\det B)^{-2}|\hat\Omega_1\cap B_{R/2}| &\leq \big(|B_{R/2}|-\int\eta \big) + \big( \int\eta-\int_{B_{R/2}}\eta\circ\hat{T} \big)\\
		& \lesssim \frac{s}{R}|B_{R/2}| + \frac{1}{s}|B_{R/2}|R(E+D)^{1/2} \ll |B_{R/2}|,
	\end{align*}
	where $s$ is chosen so that $(E+D)^{1/2}\ll s/R \ll 1$,	and hence by (\ref{wg09}) we have $|B_{R/2}\setminus\hat\Omega_1|\ll|B_{R/2}|$. Since in
	view of (\ref{wg09}) we also deduce that $\hat\Omega_1\cap B_{1/2}$ is a connected set
	with Lipschitz boundary of small Lipschitz constant, this implies (\ref{wg10}).
	From $(\det B)^{-2}\chi_{\hat\Omega_1}$ $=\hat{T}\sharp\chi_{\hat\Omega_0}$ and (\ref{wg08bis})
	we also obtain $|(\det B)^{-2}-1|^2$ $\lesssim R^{2\alpha}(E+D)$ (as Lemma \ref{lem:value}); hence
	by a second (implicit) change of variables in the target space
	(in form of a dilation, as in Step 2 for Proposition \ref{prop:harmoniclagrangian}) we may assume that $\chi_{\hat\Omega_1}=\hat{T}\sharp\chi_{\hat\Omega_0}$
	without affecting (\ref{wg08bis}).

	As a consequence of these observations, we may apply
	the interior regularity theory \cite[Proposition 3.7]{GoldmanOtto}
	(with both densities $\equiv1$)
	in form of $[\nabla\hat T]_{\alpha,B_{R/16}}^2\lesssim E+D$. This
	trivially implies
	\begin{align*}
	\frac{1}{r^{2+2\alpha}}\min_{\hat a,\hat A}\fint_{B_{r}}
	|\hat T-(\hat A\hat x+\hat a)|^2\lesssim E+D\quad\mbox{for}\;r\le R/16,
	\end{align*}
	which translates back into the desired
	\begin{align*}
	\frac{1}{r^{2+2\alpha}}\min_{a,A}\fint_{B_{r}}
	|T-(Ax+a)|^2\lesssim E+D\quad\mbox{for}\;r\le R/32.
	\end{align*}
	The remaining intermediate range $R/32\le r\le R$ follows directly from (\ref{wg08})
	for $r=R$.
\end{proof}

\end{document}